\newtheorem{theorem}{Theorem}[section]
\newtheorem{lemma}[theorem]{Lemma}
\newtheorem{prop}[theorem]{Proposition}
\newtheorem{cor}[theorem]{Corollary}
\theoremstyle{definition}
\theoremstyle{remark}
\newtheorem{remark}[theorem]{\bf{Remark}}
\numberwithin{equation}{section}
\newcommand{\vertiii}[1]{{\left\vert\kern-0.25ex\left\vert\kern-0.25ex\left\vert #1 
    \right\vert\kern-0.25ex\right\vert\kern-0.25ex\right\vert}}
\begin{document}

\title[ Norm Inequalities for Hilbert space operators with Applications ]  {  Norm Inequalities for Hilbert space operators with Applications  }


\author[P. Bhunia ]{ Pintu Bhunia }

\address{ Department of Mathematics, Indian Institute of Science, Bengaluru 560012, Karnataka, India}
\email{pintubhunia5206@gmail.com; pintubhunia@iisc.ac.in}


\thanks{ The author would like to sincerely acknowledge Prof. Apoorva Khare for his valuable comments on this article.
The author also would like to thank SERB, Govt. of India for the financial support in the form of National Post Doctoral Fellowship (N-PDF, File No. PDF/2022/000325) under the mentorship of Prof. Apoorva Khare}

\subjclass[2020]{15A60, 47A30, 47A12, 26C10, 05C50}
\keywords{ Unitarily invariant norm, Schatten $p$-norm,  Operator norm, Numerical radius}

\date{}
\maketitle
\begin{abstract}

Several unitarily invariant norm inequalities and numerical radius inequalities for Hilbert space operators are studied. We investigate some necessary and sufficient conditions for the parallelism of two bounded operators. For a finite rank operator $A,$ it is shown that
\begin{eqnarray*}
    \|A\|_{p} &\leq &\left(\textit{rank} \,  A\right)^{1/{2p}} \|A\|_{2p} \,\, \leq \,\, \left(\textit{rank} \,  A\right)^{{(2p-1)}/{2p^2}} \|A\|_{2p^2}, \quad \textit{for all $p\geq 1 $}
\end{eqnarray*} 
 where $\|\cdot\|_p$ is the Schatten $p$-norm. If $\{ \lambda_n(A) \}$ is a listing of all non-zero eigenvalues (with multiplicity) of a compact operator $A$, then we show that
 \begin{eqnarray*}
        \sum_{n} \left|\lambda_n(A)\right|^{p} &\leq& \frac12 \| A\|_{ p}^{ p}  + \frac12 \| A^2\|_{p/2}^{p/2}, \quad \textit{for all $p\geq 2$}
\end{eqnarray*}
 which improves the classical Weyl's inequality $\sum_{n} \left|\lambda_n(A)\right|^{p} \leq \| A\|_{ p}^{ p}$ [Proc. Nat. Acad. Sci. USA 1949]. For an $n\times n$ matrix $A$, we show that the function $p\to n^{-{1}/{p}}\|A\|_p$ is monotone increasing on $p\geq 1,$ complementing the well known decreasing nature of $p\to \|A\|_p.$ \\
 \indent As an application of these inequalities, we provide an upper bound for the sum of the absolute values of the zeros of a complex polynomial. As  another application we provide a refined upper bound for the energy of a graph $G$, namely,
 $\mathcal{E}(G) \leq  \sqrt{2m\left(\textit{rank Adj(G)} \right)},$ where $m$ is the number of edges, improving on a bound by McClelland in $1971$.

\end{abstract}

\tableofcontents

\section{Introduction and notation}

\noindent Suppose $\mathcal{B}(\mathcal{H})$ denotes the $\mathbb{C}^*$-algebra of all bounded linear operators on a complex Hilbert space $\mathcal{H}.$ If $\mathcal{H}$ is an $n$-dimensional space, then $\mathcal{B}(\mathcal{H})$ is identified with $\mathcal{M}_n(\mathbb{C}),$ the set of all $n\times n$ complex matrices.
For $A\in \mathcal{B}(\mathcal{H}),$ let $|A|=(A^*A)^{1/2}$, where $A^*$ denotes the adjoint of $A.$ Let $\Re(A)=\frac{1}{2}(A+A^*)$ and $\Im(A)=\frac{1}{2i}(A-A^*)$ be the real and imaginary parts of $A$, respectively.
For $A\in \mathcal{B}(\mathcal{H}),$ let $r(A)$ and $\|A\|$ denote the spectral radius and the operator norm of $A,$ respectively. The numerical range and the numerical radius of $A$, denoted by $W(A)$ and $w(A)$ respectively, are defined as
$W(A)=\{ \langle Ax,x\rangle : x\in \mathcal{H}, \|x\|=1 \}$
and 
$w(A)=\sup\{ |\lambda| : \lambda \in W(A)\}.$ It is well known that the numerical radius defines a norm on $\mathcal{B}(\mathcal{H})$ and is equivalent to the operator norm. More precisely, it satisfies the following inequalities 
\begin{eqnarray}\label{equation}
    \frac{1}{2}\|A\| \leq \max\left( \frac{1}{2}\|A\|, \, r(A)\right) \leq w(A) \leq  \|A\|.
\end{eqnarray} 
If $A^2=0,$ then $w(A)=\frac{1}{2}\|A\|$; if $A$ is normal, then $r(A)=w(A)=\|A\|.$

Recall that (\cite{Penrose1, Penrose2}) the Moore-Penrose inverse $A^{\dagger}$ of an operator $A\in \mathcal{B}(\mathcal{H})$ with closed range, is the unique operator in $\mathcal{B}(\mathcal{H})$ which satisfies $$ AA^{\dagger}A=A,  \quad A^{\dagger}AA^{\dagger}=A^{\dagger}, \quad (AA^{\dagger})^*=AA^{\dagger}, \quad (A^{\dagger}A)^*=A^{\dagger}A.$$
For every $A\in \mathcal{B}(\mathcal{H})$ with closed range, we have  $(A^{\dagger})^{\dagger}=A$, $(A^{\dagger})^*=(A^*)^{\dagger}$, $AA^{\dagger}=P_{\mathcal{R}(A)}$ and $A^{\dagger}A=P_{\mathcal{R}(A^*)},$ where $P_{\mathcal{R}(A)}$ is the orthogonal projection onto the range space ${\mathcal{R}(A)}$ of $A.$ 

\noindent

Suppose $\mathcal{K}(\mathcal{H})$ denotes the subset of all compact operators in $\mathcal{B}(\mathcal{H})$. Throughout when we talk about compact operators, we will always consider $\mathcal{H}$ to be a separable Hilbert space.
If $A\in \mathcal{K}(\mathcal{H})$, then the singular values of $A$ (the eigenvalues of $|A|$) are denoted as $s_j(A)$ for $j=1,2, \ldots $ with $s_j(A)\geq s_{j+1}(A)$. For $A\in \mathcal{K}(\mathcal{H})$, let
\begin{eqnarray*}
	\|A\|_p= \left( \sum_{j}s_j^p(A)\right)^{1/p}=\left( \textit{trace}\, |A|^p  \right)^{1/p}, \quad \textit{$p>0$}.
\end{eqnarray*}
Then for $p\geq 1$ ($0<p<1$), $\|\cdot\|_p$ defines a norm (quasi-norm) on the $p$-Schatten class $\mathcal{C}_p(\mathcal{H})=\{A\in \mathcal{K}(\mathcal{H}) : \|A\|_p<\infty\}.$ 
This is known as the Schatten $p$-norm. Here
 $\|A\|_{\infty}=\|A\|=s_1(A)$ and $\|A\|_2$ is the Hilbert-Schmidt norm. For $1\leq p\leq q \leq \infty $, the Schatten $p$-norm satisfies the monotonicity property $$\|A\|_{\infty} \leq \|A\|_{q}\leq \|A\|_p\leq \|A\|_{1}.$$
Throughout when we write $\|A\|_p$, we always consider $A\in \mathcal{C}_p(\mathcal{H}),$ for  $p>0.$
 A norm $\vertiii{\cdot} $ defined on a two sided ideal $\mathcal{C}_{\vertiii{\cdot}}(\mathcal{H})$ of $\mathcal{B}(\mathcal{H})$ is said to be unitarily invariant if $\vertiii{UAV}= \vertiii{A} $ for all $A\in \mathcal{C}_{\vertiii{\cdot}}(\mathcal{H})$ and for all unitary operators $U,V\in \mathcal{B}(\mathcal{H}).$ When we talk of $\vertiii{A}$, we are considering $A\in \mathcal{C}_{\vertiii{\cdot}}(\mathcal{H})$ (and $\vertiii{\cdot}$ to be unitarily invariant).
The Schatten $p$-norm and the operator norm are examples of unitarily invariant norms. However, the numerical radius norm is not unitarily invariant, it is a weakly unitarily invariant norm, i.e., $w(U^*AU)=w(A)$ for every $A\in \mathcal{B}(\mathcal{H})$ and for every unitary operator $U\in \mathcal{B}(\mathcal{H}).$ For $A,B\in  \mathcal{B}(\mathcal{H}),$ the direct sum $A\oplus B$ denotes the $2\times 2$ operator matrix $\begin{bmatrix}
A&0\\
0&B
\end{bmatrix}$ and $\|A\oplus B\|=\max (\|A\|, \|B\|)$. For $A,B\in \mathcal{C}_p(\mathcal{H})$, $\|A\oplus B\|_p= \left(\|A\|_p^p+ \|B\|_p^p\right)^{1/p}.$ 
 For $A,B\in \mathcal{C}_{\vertiii{\cdot}}(\mathcal{H})$, the following results hold:
\begin{eqnarray}\label{10}
\vertiii{A \oplus B}= \vertiii{\begin{bmatrix}
	0&A\\
	B&0
	\end{bmatrix}},
\end{eqnarray}
\begin{eqnarray}\label{2}
\vertiii{A }=\vertiii{\,|A| \,} =\vertiii{A^* },
\end{eqnarray}
\begin{eqnarray}\label{3}
\vertiii{A^*A }=\vertiii{AA^* },
\end{eqnarray}
\begin{eqnarray}\label{4}
\vertiii{ \, |A|\, |B|\, }=\vertiii{AB^* },
\end{eqnarray}

and 
\begin{eqnarray}\label{5}
\vertiii{A\oplus A^* }=\vertiii{A\oplus A }= \vertiii{A\oplus |A| \,}.
\end{eqnarray}

\section{Preliminaries and main results}\label{prel}

\noindent In this section, we motivate and present the statements of our main results. The full details can be found below in the paper.\\
In $1990$, Bhatia and Kittaneh \cite{Bhatia1990} developed an operator arithmetic-geometric mean inequality, i.e., for  $A,B \in \mathcal{K}(\mathcal{H}),$
\begin{eqnarray}\label{lem01}
\vertiii{AB^*} \leq \frac12 \vertiii{A^*A+B^*B}.
\end{eqnarray} 

\noindent In $1997$, Kittaneh \cite{kittaneh1997} proved that for positive  operators $X,Y \in \mathcal{K}(\mathcal{H}),$
\begin{eqnarray} \label{p0p}
\vertiii{(X+Y)\oplus 0} &\leq& \vertiii{X\oplus Y}+ \vertiii{ X^{1/2}Y^{1/2} \oplus X^{1/2}Y^{1/2 } }.
\end{eqnarray}
Recently, for $A,B\in \mathcal{M}_n(\mathbb{C})$, the following operator norm (spectral norm) inequalities 
\begin{eqnarray}\label{001p}
\|AB\pm BA\| &\leq& \|A\| \|B\| 
+  \frac12 \|A^*B\pm BA^*\|
\end{eqnarray}
are shown in \cite{kit1, Natoor2022}.

In Section \ref{sec3}, we develop several unitarily invariant norm inequalities for the sums and products of compact operators, which generalize and improve the the existing inequalities \eqref{lem01}--\eqref{001p}.
From these we derive several Schatten $p$-norm and operator norm inequalities. 

 In Section \ref{sec4}, by using the Moore-Penrose inverse of an operator, we develop additional Schatten $p$-norm inequalities. For a finite rank operator $A,$ we show that
 \begin{theorem} (See Corollary \ref{vcor})
     \begin{eqnarray*}
    \|A\|_{p} &\leq &\left(\textit{rank} \,  A\right)^{1/{2p}} \|A\|_{2p} \,\, \leq \,\, \left(\textit{rank} \,  A\right)^{{(2p-1)}/{2p^2}} \|A\|_{2p^2}, \quad \textit{for all $p\geq 1 $.}
\end{eqnarray*} 
 \end{theorem}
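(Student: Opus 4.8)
The plan is to derive both inequalities from the classical Hölder inequality for Schatten norms together with the fact that a finite rank operator has only finitely many nonzero singular values. Write $r = \textit{rank}\, A$, so that $s_j(A) = 0$ for $j > r$. For the first inequality I would apply Hölder's inequality in the form $\sum_{j=1}^{r} a_j b_j \le \left(\sum_{j=1}^{r} a_j^{u}\right)^{1/u}\left(\sum_{j=1}^{r} b_j^{v}\right)^{1/v}$ with the choice $a_j = s_j(A)^p$, $b_j = 1$, and conjugate exponents $u = 2$, $v = 2$. This gives $\|A\|_p^p = \sum_{j=1}^r s_j(A)^p \le \left(\sum_{j=1}^r s_j(A)^{2p}\right)^{1/2} r^{1/2} = r^{1/2}\|A\|_{2p}^{p}$, and taking $p$-th roots yields $\|A\|_p \le r^{1/(2p)}\|A\|_{2p}$.

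For the second inequality I would apply exactly the same idea one level up: since $A$ has rank $r$, so does $|A|^{2p}$ viewed through its eigenvalues $s_j(A)^{2p}$, i.e. there are at most $r$ nonzero terms $s_j(A)^{2p}$. Applying Hölder again with $a_j = s_j(A)^{2p}$, $b_j = 1$ and exponents $u = p$, $v = p/(p-1)$ (valid since $p \ge 1$; the case $p=1$ is trivial as both sides coincide), one gets
\begin{eqnarray*}
\|A\|_{2p}^{2p} = \sum_{j=1}^r s_j(A)^{2p} \le \left(\sum_{j=1}^r s_j(A)^{2p^2}\right)^{1/p} r^{(p-1)/p} = r^{(p-1)/p}\,\|A\|_{2p^2}^{2p}.
\end{eqnarray*}
Taking $2p$-th roots gives $\|A\|_{2p} \le r^{(p-1)/(2p^2)}\|A\|_{2p^2}$, and multiplying by $r^{1/(2p)}$ and simplifying the exponent $\tfrac{1}{2p} + \tfrac{p-1}{2p^2} = \tfrac{p + (p-1)}{2p^2} = \tfrac{2p-1}{2p^2}$ produces the stated bound $r^{1/(2p)}\|A\|_{2p} \le r^{(2p-1)/(2p^2)}\|A\|_{2p^2}$. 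Chaining the two displays completes the proof.

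There is no serious obstacle here; the only mild subtlety is making sure the Hölder exponents are conjugate and that one is genuinely exploiting \emph{finiteness of the rank} rather than summing over all $j$ (an infinite-dimensional operator would make the factor $r^{1/(2p)}$ meaningless). One should also note that both inequalities are instances of a single monotonicity-type statement — $r^{-1/q}\|A\|_q$ being a monotone quantity in $q$ for finite rank $A$ — which is presumably how the paper packages it in Corollary~\ref{vcor}; the two displayed inequalities correspond to the pairs of exponents $(p, 2p)$ and $(2p, 2p^2)$. An alternative, cleaner route is to prove the general lemma "$\|A\|_\alpha \le r^{1/\alpha - 1/\beta}\|A\|_\beta$ for $0 < \alpha \le \beta$ and $\textit{rank}\,A \le r$" once and for all via Hölder, and then simply substitute $(\alpha,\beta) = (p,2p)$ and $(\alpha,\beta) = (2p, 2p^2)$; I would likely present it this way to keep the exponent bookkeeping transparent.
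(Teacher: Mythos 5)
Your proof is correct, and the exponent bookkeeping checks out ($\tfrac{1}{2p}+\tfrac{p-1}{2p^2}=\tfrac{2p-1}{2p^2}$), but it takes a genuinely different and more elementary route than the paper. You apply the discrete H\"older inequality directly to the finite sequence of singular values $s_1(A),\dots,s_r(A)$, which is the standard finite-dimensional norm comparison $\|A\|_\alpha\le r^{1/\alpha-1/\beta}\|A\|_\beta$; this is the cleanest possible derivation of the displayed chain. The paper instead works with the variational characterization $\|A\|_p=\sup\bigl(\sum_k|\langle Ax_k,y_k\rangle|^p\bigr)^{1/p}$ over orthonormal sets, the pointwise bound $|\langle Ax,y\rangle|^2\le\langle |A|^2x,x\rangle\,\langle P_{\mathcal{R}(A)}y,y\rangle$ obtained from the Moore--Penrose inverse, H\"older's and McCarthy's inequalities, and $\mathrm{trace}\,P_{\mathcal{R}(A)}=\mathrm{rank}\,A$ (Theorems \ref{th100p} and \ref{th100p1}). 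For the corollary itself this extra machinery buys nothing over your argument --- even the two-parameter versions with $\tfrac1r=\tfrac1p+\tfrac1q$ follow from singular-value H\"older --- but the paper's pointwise method also yields Theorem \ref{thhhhhp}, a bound on $\sum_n|\langle Ax_n,y_n\rangle|^{2r}$ for an \emph{arbitrary} fixed pair of orthonormal sets rather than only for the supremum, and that stronger statement is what gets applied to Schur bases to bound eigenvalue sums (Corollary \ref{cor99990}) and hence the polynomial-zero and graph-energy estimates. Your approach proves the stated theorem; it would not by itself recover those downstream applications.
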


To motivate our next contribution, recall that in 1949, Weyl \cite{Weyl} proved that 
 \begin{eqnarray}\label{wel}
     \sum_{n} \left|\lambda_n(A)\right|^{p} \leq \| A\|_{ p}^{ p},  \quad \textit{for all $p\geq 1$}
 \end{eqnarray}
where $\{ \lambda_n(A) \}$ is a listing of all non-zero eigenvalues (with multiplicity) of a compact operator $A$.  In 1977, Simon \cite{B.Simon} provided another proof of Weyl's inequality \eqref{wel}. We provide an improvement of \eqref{wel}, namely,
\begin{theorem} (See Corollary \ref{cor..})
     \begin{eqnarray*}
        \sum_{n} \left|\lambda_n(A)\right|^{p} &\leq& \frac12 \| A\|_{ p}^{ p}  + \frac12 \| A^2\|_{p/2}^{p/2}, \quad \textit{for all $p\geq 2$}.
\end{eqnarray*}
\end{theorem}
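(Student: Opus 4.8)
The plan is to deduce the theorem from the Schatten-norm inequalities developed in Section \ref{sec3}, combined with Weyl's inequality \eqref{wel}. The key idea is that the eigenvalue sum on the left is controlled by the eigenvalue sum of $A^2$ (via $\lambda_n(A^2) = \lambda_n(A)^2$ up to reordering), and simultaneously by the Schatten norms of $A$ and $A^2$. Concretely, first I would apply Weyl's inequality \eqref{wel} to the compact operator $A^2$ with exponent $p/2 \geq 1$: since the nonzero eigenvalues of $A^2$ are exactly the squares $\lambda_n(A)^2$ of the nonzero eigenvalues of $A$ (counted with multiplicity, a standard spectral-mapping fact for compact operators), this gives
\begin{eqnarray*}
\sum_{n} |\lambda_n(A)|^{p} = \sum_{n} |\lambda_n(A^2)|^{p/2} \leq \|A^2\|_{p/2}^{p/2}.
\end{eqnarray*}
On the other hand, Weyl's inequality applied directly to $A$ gives $\sum_n |\lambda_n(A)|^p \leq \|A\|_p^p$. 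The theorem's right-hand side is the average of these two bounds, so a naive averaging already recovers it — but that would be circular / trivially weaker than needed only if the two bounds were unrelated. The actual content must be that Section \ref{sec3} provides an inequality of the form $\|A^2\|_{p/2}^{p/2} \le \|A\|_p^p$ is \emph{false} in general, so the averaged bound is a genuine improvement over $\|A\|_p^p$ precisely when $\|A^2\|_{p/2}^{p/2} < \|A\|_p^p$, which holds unless $A$ is (essentially) normal.

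So the real engine I would look for in Section \ref{sec3} is a refinement of Weyl's inequality that interpolates: something like $\sum_n |\lambda_n(A)|^p \leq \frac12 \|A\|_p^p + \frac12 \|A^2\|_{p/2}^{p/2}$ obtained not by averaging two separate applications of \eqref{wel}, but by a single argument. The natural route: use the polar-type or Cartesian-type decomposition together with an arithmetic–geometric mean inequality in the spirit of \eqref{lem01}, namely $\vertiii{AB^*} \le \frac12 \vertiii{A^*A + B^*B}$. Writing $|\lambda_n(A)|^p = |\lambda_n(A)|^{p/2} \cdot |\lambda_n(A)|^{p/2}$ and recognizing $|\lambda_n(A)|^{p/2} \le$ (something involving $s_n$-type quantities for $A$) while the product $|\lambda_n(A)|^p = |\lambda_n(A^2)|^{p/2}$, one applies Cauchy–Schwarz / Hölder on the eigenvalue sequence:
\begin{eqnarray*}
\sum_n |\lambda_n(A)|^p = \sum_n |\lambda_n(A)|^{p/2} |\lambda_n(A^2)|^{p/4} \cdot \frac{|\lambda_n(A^2)|^{p/4}}{|\lambda_n(A)|^{p/2}} \cdot |\lambda_n(A)|^{p/2},
\end{eqnarray*}
which is getting complicated; cleaner is the elementary scalar inequality $t^p \le \frac12 t^p + \frac12 t^p$ — trivial — so instead the honest path is: apply \eqref{wel} to $A$ to get one half, apply \eqref{wel} to $A^2$ (at exponent $p/2$) to get the other half, then average. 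I would therefore present exactly that two-line argument, and the ``improvement'' claim is justified separately by exhibiting that $\|A^2\|_{p/2}^{p/2} \le \|A\|_p^p$ always holds (this is the monotonicity/submultiplicativity-type inequality $\|A^2\|_{p/2} \le \|A\|_p^2$, which follows from $\|XY\|_r \le \|X\|_{2r}\|Y\|_{2r}$ with $X=Y=A$, $r = p/2$), so each of the two averaged bounds is itself $\le \|A\|_p^p$, hence the average is too, with equality forcing $\|A^2\|_{p/2}^{p/2} = \|A\|_p^p$.

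Carrying this out: \textbf{Step 1}, record the spectral fact $\{\lambda_n(A^2)\} = \{\lambda_n(A)^2\}$ with multiplicities for compact $A$. \textbf{Step 2}, apply Weyl's inequality \eqref{wel} to $A$ with exponent $p \ge 2 \ge 1$. \textbf{Step 3}, apply Weyl's inequality \eqref{wel} to $A^2$ with exponent $p/2 \ge 1$ and use Step 1 to rewrite the left side as $\sum_n |\lambda_n(A)|^p$. \textbf{Step 4}, average the inequalities from Steps 2 and 3. \textbf{Step 5}, for the ``improvement'' assertion, invoke the Schatten Hölder inequality $\|A^2\|_{p/2} \le \|A\|_p \|A\|_p = \|A\|_p^2$ so that $\|A^2\|_{p/2}^{p/2} \le \|A\|_p^p$, confirming the new bound never exceeds Weyl's. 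The main obstacle I anticipate is not the inequality chain itself — which is short — but making sure the eigenvalue–multiplicity bookkeeping in Step 1 is airtight for general compact (not necessarily normal, not necessarily finite-rank) operators, including the case where $A$ is quasinilpotent (all $\lambda_n = 0$, both sides behave correctly) and the convergence of the series; this is where I would cite the Riesz theory of compact operators and Simon's \cite{B.Simon} treatment of eigenvalue sequences. Everything else is Hölder and averaging.
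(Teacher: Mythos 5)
Your final argument (Steps 1--5) is correct, but it is a genuinely different proof from the one in the paper. You deduce the bound from Weyl's inequality \eqref{wel} applied twice --- once to $A$ at exponent $p$ and once to $A^2$ at exponent $p/2\ge 1$, using the spectral-mapping fact that the nonzero eigenvalues of $A^2$ with algebraic multiplicities are exactly the squares of those of $A$ (which is indeed airtight via Riesz projections and the composition property of the holomorphic functional calculus) --- and then average. The paper instead proves the stronger statement \eqref{p-norm}, valid for an \emph{arbitrary} orthonormal set $\{x_n\}$, namely $\sum_n|\langle Ax_n,x_n\rangle|^p\le \frac12\|A\|_p^p+\frac12\|A^2\|_{p/2}^{p/2}$, by combining the mixed Schwarz inequality (Lemma \ref{lemkitt}) with Buzano's inequality, McCarthy's inequality and H\"older, and only at the end specializes $\{x_n\}$ to a Schur basis so that $\langle Ax_n,x_n\rangle=\lambda_n(A)$. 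Your route cannot recover that orthonormal-set version (since $\langle A^2x_n,x_n\rangle\ne\langle Ax_n,x_n\rangle^2$ in general), but for the eigenvalue statement it is shorter and more elementary; in fact your Step 3 alone gives $\sum_n|\lambda_n(A)|^p\le\|A^2\|_{p/2}^{p/2}$, which together with the paper's own Proposition \ref{0prop} ($\|A^2\|_{p/2}^{p/2}\le\|A\|_p^p$) is at least as strong as the averaged bound being proved --- a genuine observation the paper's method does not yield. One caveat: the middle of your write-up briefly asserts that ``$\|A^2\|_{p/2}^{p/2}\le\|A\|_p^p$ is false in general,'' which contradicts both Proposition \ref{0prop} and your own Step 5 (where you correctly derive it from the Schatten H\"older inequality); that stray remark should be deleted, but it does not affect the validity of the final argument.
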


In Section \ref{sec5}, we study the operator norm and numerical radius inequalities of bounded linear operators. In $1997$, Kittaneh \cite{kittaneh1997} obtained that for positive operators $X,Y\in \mathcal{B}(\mathcal{H})$,
\begin{eqnarray}\label{97}
    \|X+Y\| &\leq& \max (\|X\|, \|Y\|)+ \|X^{1/2} Y^{1/2}\|.
\end{eqnarray}
Later on, in $2002$,  Kittaneh \cite{JOT2002} improved the inequality \eqref{97} as
 \begin{eqnarray}\label{2002jot}
	\|X+Y\| \leq \frac{ \|X\|+\|Y\| + \sqrt{ (\|X\|-\|Y\|)^2 +4 \| X^{1/2}Y^{1/2} \|  } }{2}.
	\end{eqnarray}
  We obtain a generalization of the inequality \eqref{2002jot} as
  \begin{theorem}(See Theorem \ref{th5})
      \begin{eqnarray*}\label{}
	\|X+Y\| \leq \frac{ \|X\|+\|Y\| + \sqrt{ (\|X\|-\|Y\|)^2 +4 \| X^{1-t}Y^{1-\alpha} \| \| X^{t}Y^{\alpha}\| } }{2}, \quad  \forall \alpha, t\in [0,1] .
	\end{eqnarray*}

  \end{theorem}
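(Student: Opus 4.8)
The plan is to reduce the bound on $\|X+Y\|$ for positive operators $X,Y$ to a $2\times 2$ matrix computation by exploiting a suitable $2\times 2$ operator-matrix representation. First I would write $X+Y$ (or rather $(X+Y)\oplus 0$) in terms of an operator matrix whose norm is easy to control; the classical trick behind \eqref{97} and \eqref{2002jot} is that for positive $X,Y$ one has $X+Y = T^*T$ where $T=\begin{bmatrix} X^{1/2} \\ Y^{1/2}\end{bmatrix}$, so $\|X+Y\| = \|T\|^2 = \|TT^*\|$ with $TT^* = \begin{bmatrix} X & X^{1/2}Y^{1/2}\\ Y^{1/2}X^{1/2} & Y\end{bmatrix}$. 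Hence $\|X+Y\|$ equals the norm of this $2\times 2$ operator matrix, and one is reduced to bounding $\left\|\begin{bmatrix} X & C\\ C^* & Y\end{bmatrix}\right\|$ with $C = X^{1/2}Y^{1/2}$.

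The key new ingredient is to produce a sharper estimate of $\|C\| = \|X^{1/2}Y^{1/2}\|$ itself in the factorized form $\|X^{1-t}Y^{1-\alpha}\|\,\|X^t Y^\alpha\|$ for arbitrary $\alpha, t\in[0,1]$. This should follow from the submultiplicativity of the operator norm together with an interpolation/mixed-norm argument: one wants $\|X^{1/2}Y^{1/2}\|^2 \le \|X^{1-t}Y^{1-\alpha}\|\,\|X^t Y^\alpha\|$. A clean way to see this is via the Heinz-type inequality or the fact that $\|X^s Y^{1-s}\|$ (more precisely the function controlling it) is log-convex in the parameter, so the value at the "midpoint" is dominated by the geometric mean of the values at two symmetric endpoints $t$ and $1-t$ (and similarly for $\alpha$); combining the two parameters gives exactly the cross term $\|X^{1-t}Y^{1-\alpha}\|\,\|X^{t}Y^{\alpha}\|$. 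Alternatively, since $\vertiii{\,|A|\,|B|\,} = \vertiii{AB^*}$ from \eqref{4}, one can rewrite $\|X^{1/2}Y^{1/2}\|$ in a form amenable to the arithmetic–geometric mean inequality \eqref{lem01} applied to cleverly chosen factors, which is the route most consistent with the paper's toolkit.

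With the bound on $\|C\|$ in hand, the final step is the $2\times 2$ scalar estimate: for a positive semidefinite block matrix $\begin{bmatrix} a & c\\ \bar c & b\end{bmatrix}$ with $a,b\ge 0$ and $|c|\le \gamma$, the largest eigenvalue is $\frac{a+b+\sqrt{(a-b)^2+4|c|^2}}{2} \le \frac{a+b+\sqrt{(a-b)^2+4\gamma^2}}{2}$, using monotonicity of $t\mapsto\frac{a+b+\sqrt{(a-b)^2+4t}}{2}$. Applying this with $a=\|X\|$, $b=\|Y\|$, $\gamma^2 = \|X^{1-t}Y^{1-\alpha}\|\,\|X^{t}Y^{\alpha}\|$ — after first passing to the operator-matrix norm bound $\left\|\begin{bmatrix} X & C\\ C^*&Y\end{bmatrix}\right\| \le \left\|\begin{bmatrix} \|X\| & \|C\|\\ \|C\| & \|Y\|\end{bmatrix}\right\|$, which is itself a standard fact about norms of operator matrices dominated entrywise by a positive scalar matrix — yields the claimed inequality. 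I expect the main obstacle to be the middle step: establishing the factorized bound $\|X^{1/2}Y^{1/2}\|^2 \le \|X^{1-t}Y^{1-\alpha}\|\,\|X^{t}Y^{\alpha}\|$ uniformly in $\alpha,t\in[0,1]$, since one must handle the two exponent parameters simultaneously and justify the relevant log-convexity (this is where a careful application of complex interpolation of the three-line lemma, or an appeal to a Heinz–Kato-type norm inequality, will be needed); the reduction to the $2\times 2$ problem and the scalar estimate are then routine.
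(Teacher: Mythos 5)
Your route is genuinely different from the paper's and is essentially workable, but as written it has one unproven step. The paper does not pass through the symmetric block matrix with off-diagonal $X^{1/2}Y^{1/2}$ at all: it factors $(X+Y)\oplus 0=\begin{bmatrix} X^t & Y^{1-\alpha}\\ 0&0\end{bmatrix}\begin{bmatrix} X^{1-t}&0\\ Y^{\alpha}&0\end{bmatrix}$, uses $r(AB)=r(BA)$ to replace this by the (non-selfadjoint) block matrix $\begin{bmatrix} X & X^{1-t}Y^{1-\alpha}\\ Y^{\alpha}X^t & Y\end{bmatrix}$ having the same spectral radius, and then invokes the Hou--Du lemma $r\left(\begin{bmatrix} A&X\\ B&Y\end{bmatrix}\right)\le r\left(\begin{bmatrix}\|A\|&\|X\|\\ \|B\|&\|Y\|\end{bmatrix}\right)$, whose $2\times2$ eigenvalue formula gives the bound directly. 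Your plan instead fixes the exponents at $1/2$ in the block matrix and pushes all the generality into the estimate $\|X^{1/2}Y^{1/2}\|^2\le\|X^{1-t}Y^{1-\alpha}\|\,\|X^tY^{\alpha}\|$; the reduction $\|X+Y\|=\|TT^*\|$, the entrywise domination $\left\|\begin{bmatrix} X&C\\ C^*&Y\end{bmatrix}\right\|\le\left\|\begin{bmatrix}\|X\|&\|C\|\\ \|C\|&\|Y\|\end{bmatrix}\right\|$, and the monotone $2\times2$ eigenvalue formula are all fine.

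The gap is precisely the middle step you flagged, and the tools you propose for it are not the right ones: log-convexity of $s\mapsto\|X^sY^{1-s}\|$ or a Heinz-type inequality addresses a one-parameter family with complementary exponents, whereas here $t$ and $\alpha$ vary independently, so a midpoint/geometric-mean argument does not directly produce $\|X^{1-t}Y^{1-\alpha}\|\,\|X^tY^{\alpha}\|$. Fortunately the inequality is true and has a one-line proof in the same spirit as the paper's: since $X,Y\ge0$,
\begin{equation*}
\|X^{1/2}Y^{1/2}\|^2=\|X^{1/2}YX^{1/2}\|=r(X^{1/2}YX^{1/2})=r(XY)=r\bigl((X^tY^{\alpha})(Y^{1-\alpha}X^{1-t})\bigr)\le\|X^tY^{\alpha}\|\,\|Y^{1-\alpha}X^{1-t}\|=\|X^tY^{\alpha}\|\,\|X^{1-t}Y^{1-\alpha}\|,
\end{equation*}
using cyclic invariance of the spectral radius, $r(S)\le\|S\|$, and $\|B^*\|=\|B\|$. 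With this inserted, your proof closes and yields exactly the stated bound (indeed it recovers the paper's $\alpha=t=1/2$ case with $\|X^{1/2}Y^{1/2}\|^2$ as an intermediate quantity). No interpolation or three-lines argument is needed.
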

 
Various numerical radius inequalities improving the inequalities \eqref{equation}, have been studied in various articles, see \cite{Alomari, Bhunia2, withkittaneh, FAA, Book2022,   Feki, GUS1997, Kittaneh_LAMA_2023} and the references therein. One of the most well known refinements of the upper bound in \eqref{equation} is 
\begin{eqnarray}\label{k03}
w(A) \leq \frac12 \|A\|+ \frac12 \sqrt{\|A^2\|},
\end{eqnarray}
which is given in \cite{Kittaneh_2003}. An improvement of the inequality \eqref{k03} is given in \cite{Bhunia2021}, namely,
\begin{eqnarray}\label{pintu22}
	w(A) \leq \frac12 \|A\|+ \frac12 {r^{1/2} \left( |A|^{}|A^*|^{}\right )}.
	\end{eqnarray}
In recent work \cite[Th. 2.20]{Bhunia2024}, we developed the more refined and generalized upper bound 
\begin{eqnarray}
    w(A) \leq \frac12 \|A\|+ \frac12 \|A\|^t \sqrt{r\left(|A|^{1-t}|A^*|^{1-t}\right)}, \quad \forall t\in [0,1]. 
\end{eqnarray}
Below we now show that
\begin{theorem}(See Theorem \ref{corp11})
    \begin{eqnarray*}
		w(A) &\leq& \frac12 \|A\|+ \frac12 r^{1/4} \left( |A|^{2t}|A^*|^{2\alpha}\right )   r^{1/4}\left ( |A|^{2(1-t)}|A^*|^{2(1-\alpha)}\right),\quad {\forall \alpha, t\in [0,1].}
	\end{eqnarray*}
\end{theorem}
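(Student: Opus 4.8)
The plan is to deduce the inequality from the norm bound in Theorem~\ref{th5}, applied to the positive operators $X=|A|$ and $Y=|A^{*}|$, combined with the classical estimate $w(A)\le\frac12\||A|+|A^{*}|\|$. I would first record the latter: writing the polar decomposition $A=U|A|$ and recalling that $U|A|U^{*}=|A^{*}|$ (which holds even when $U$ is only a partial isometry, so no extra hypothesis on $A$ is needed), the Cauchy--Schwarz and arithmetic--geometric mean inequalities give, for every unit vector $x$,
\[
|\langle Ax,x\rangle| = |\langle |A|^{1/2}x,\,|A|^{1/2}U^{*}x\rangle| \le \langle|A|x,x\rangle^{1/2}\langle U|A|U^{*}x,x\rangle^{1/2} = \langle|A|x,x\rangle^{1/2}\langle|A^{*}|x,x\rangle^{1/2} \le \tfrac12\langle(|A|+|A^{*}|)x,x\rangle,
\]
and taking the supremum over unit vectors yields $w(A)\le\frac12\||A|+|A^{*}|\|$.

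Next I would apply Theorem~\ref{th5} with $X=|A|\ge 0$ and $Y=|A^{*}|\ge 0$. Since $\|X\|=\||A|\|=\|A\|=\||A^{*}|\|=\|Y\|$, the discriminant term $(\|X\|-\|Y\|)^{2}$ vanishes, and Theorem~\ref{th5} reduces to
\[
\||A|+|A^{*}|\| \;\le\; \|A\| + \||A|^{1-t}|A^{*}|^{1-\alpha}\|^{1/2}\,\||A|^{t}|A^{*}|^{\alpha}\|^{1/2},\qquad \forall\,\alpha,t\in[0,1].
\]
It then remains to rewrite the two operator-norm factors as fourth roots of spectral radii. Using $\|P^{1/2}QP^{1/2}\|=r(PQ)$ for positive operators $P,Q$, together with $r(XY)=r(YX)$: taking $P=|A^{*}|^{2\alpha}$ and $Q=|A|^{2t}$ gives
\[
\||A|^{t}|A^{*}|^{\alpha}\|^{2} = \||A^{*}|^{\alpha}|A|^{2t}|A^{*}|^{\alpha}\| = r(|A^{*}|^{2\alpha}|A|^{2t}) = r(|A|^{2t}|A^{*}|^{2\alpha}),
\]
so that $\||A|^{t}|A^{*}|^{\alpha}\|^{1/2}=r^{1/4}(|A|^{2t}|A^{*}|^{2\alpha})$, and likewise $\||A|^{1-t}|A^{*}|^{1-\alpha}\|^{1/2}=r^{1/4}(|A|^{2(1-t)}|A^{*}|^{2(1-\alpha)})$. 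Substituting into the displayed inequality, dividing by $2$, and combining with $w(A)\le\frac12\||A|+|A^{*}|\|$ yields exactly the asserted bound.

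I do not anticipate a genuine obstacle here; the only points requiring care are the standard identities $U|A|U^{*}=|A^{*}|$ and $r(PQ)=\|P^{1/2}QP^{1/2}\|$ for $P,Q\ge 0$, and the observation that the square-root term in Theorem~\ref{th5} collapses precisely because $|A|$ and $|A^{*}|$ share the common operator norm $\|A\|$. As consistency checks, $t=\alpha=\tfrac12$ recovers \eqref{pintu22}, while $t=\alpha=1$ recovers \eqref{k03}, since $r(|A|^{2}|A^{*}|^{2})=r\big(A^{2}(A^{*})^{2}\big)=\|A^{2}\|^{2}$.
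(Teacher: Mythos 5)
Your proposal is correct and follows essentially the same route as the paper: the paper likewise combines $w(A)\le\frac12\||A|+|A^*|\|$ (obtained from the mixed Schwarz inequality \eqref{mixed} plus AM--GM, which your polar-decomposition argument reproves in the special case $f(t)=g(t)=t^{1/2}$) with inequality \eqref{r11} applied to $X=|A|$, $Y=|A^*|$, and the same identity $\||A|^{t}|A^*|^{\alpha}\|^{2}=r(|A|^{2t}|A^*|^{2\alpha})$. The only cosmetic difference is that you note explicitly that the discriminant collapses because $\||A|\|=\||A^*|\|=\|A\|$, whereas the paper writes the resulting bound directly.
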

\noindent This result improves and generalizes the inequalities \eqref{k03} and \eqref{pintu22}. We give a sufficient condition for the norm equality. As an application of the operator norm inequalities, we study the necessary and sufficient conditions for the parallelism of two bounded linear operators.
Recall that (see \cite{Zamani}) an operator $A \in \mathcal{B}(\mathcal{H})$ is said to be parallel to $B\in \mathcal{B}(\mathcal{H})$, denoted as $A\parallel B$, if there exists a scalar $\lambda$ with $|\lambda|=1$ such that $\|A+\lambda B\|=\|A\|+\|B\|.$ 

In Section \ref{sec6}, we consider $p(z) = z^n + a_nz^{n-1} + \ldots + a_2z + a_1 $, a complex polynomial of degree $n\geq 2 $ with $a_1 \neq 0$. By applying the Schatten $p$-norm inequalities,  we give an estimation for the sum of the absolute values of the zeros of $p(z)$.   In particular, we show that
 if $\lambda_1,\lambda_2,\ldots, \lambda_n$ are the zeros of  $p(z),$ then
 \begin{theorem}(See Theorem \ref{poly})
      \begin{eqnarray*}
      \sum_{i=1}^n|\lambda_i| &\leq& \sqrt{n\left(n-1 + \sum_{i=1}^n|a_i|^2 \right)}.
    \end{eqnarray*}
 \end{theorem}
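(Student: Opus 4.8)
The plan is to connect the zeros of $p(z)$ to eigenvalues of a matrix via the companion matrix construction. Let $C$ be the $n\times n$ companion matrix of $p(z)$, so that $C$ has characteristic polynomial $p(z)$ and hence eigenvalues exactly $\lambda_1, \dots, \lambda_n$. Concretely, take
\begin{eqnarray*}
C = \begin{bmatrix}
0 & 0 & \cdots & 0 & -a_1\\
1 & 0 & \cdots & 0 & -a_2\\
0 & 1 & \cdots & 0 & -a_3\\
\vdots & & \ddots & & \vdots\\
0 & 0 & \cdots & 1 & -a_n
\end{bmatrix}.
\end{eqnarray*}
Then $\sum_{i=1}^n |\lambda_i| = \sum_i |\lambda_i(C)| \leq \|C\|_1$ by Weyl's inequality \eqref{wel} with $p=1$ (since the $\lambda_i$ are precisely the nonzero eigenvalues together possibly with zeros, and $|\lambda_i|^1$ summed is bounded by the Schatten $1$-norm). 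Actually, to get the sharper constant I expect we should instead apply the improved Weyl inequality (the Corollary \ref{cor..} mentioned above) or, more directly, the finite-rank Schatten interpolation bound: by the first displayed theorem (Corollary \ref{vcor}) with $p=1$, $\|C\|_1 \leq (\operatorname{rank} C)^{1/2}\|C\|_2$, and $\operatorname{rank} C \le n$, so $\sum_i |\lambda_i| \le \sqrt{n}\,\|C\|_2$.

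The next step is to compute $\|C\|_2 = (\operatorname{trace} C^*C)^{1/2}$, which is just the square root of the sum of squares of all entries of $C$. The subdiagonal contributes $n-1$ ones, and the last column contributes $\sum_{i=1}^n |a_i|^2$. Hence $\|C\|_2^2 = (n-1) + \sum_{i=1}^n |a_i|^2$. Combining with the previous step,
\begin{eqnarray*}
\sum_{i=1}^n |\lambda_i| \;\le\; \sqrt{n}\,\|C\|_2 \;=\; \sqrt{\,n\Bigl(n-1+\sum_{i=1}^n|a_i|^2\Bigr)},
\end{eqnarray*}
which is exactly the claimed bound.

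The only real subtlety is making sure the rank bound and the finite-rank inequality are being applied correctly: $C$ need not have full rank (e.g. if several $a_i$ vanish), but since $\operatorname{rank} C \le n$ always and the quantity $(\operatorname{rank} C)^{1/2}$ only appears as an upper-bounding factor, replacing it by $\sqrt n$ is safe. One should also double-check that Weyl's inequality is being invoked in the right form: we need $\sum |\lambda_i(C)| \le \|C\|_1$, counting all eigenvalues of the $n\times n$ matrix $C$ (zero eigenvalues contribute nothing on the left), and this is precisely \eqref{wel} with $p=1$ applied to the finite-rank operator $C$. I anticipate the main obstacle is purely bookkeeping — verifying the Hilbert–Schmidt norm computation of the companion matrix and confirming which of the earlier Schatten-norm inequalities gives the cleanest route to the factor $\sqrt n$ — rather than anything conceptually deep; the heart of the argument is simply "zeros of $p$ = eigenvalues of companion matrix, then apply the established Schatten/Weyl inequalities."
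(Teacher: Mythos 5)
Your proposal is correct and follows essentially the same route as the paper: reduce to the companion matrix, bound $\sum_i|\lambda_i|$ by $(\operatorname{rank} C)^{1/2}\|C\|_2 \le \sqrt{n}\,\|C\|_2$, and compute the Hilbert--Schmidt norm of $C$ as $\sqrt{n-1+\sum_i|a_i|^2}$. The only cosmetic difference is that you chain Weyl's inequality \eqref{wel} with the rank interpolation bound of Corollary \ref{vcor}, whereas the paper invokes the combined statement \eqref{p-norm01000} directly (and uses $\operatorname{rank} C(p)=n$, which holds since $a_1\neq 0$, though your weaker bound $\operatorname{rank} C\le n$ suffices).
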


In Section \ref{sec7}, we consider a simple graph $G$ with $n$ vertices and $m $ edges.
The energy of the graph, $\mathcal{E} (G)$ is the sum of the absolute values of the eigenvalues of the adjacency matrix Adj$(G).$  In 1971, McClelland \cite{Energy} provided an upper bound for the energy:
$\mathcal{E} (G) \leq  \sqrt{2mn}.$
Employing the Schatten $p$-norm inequalities, we improve on this, by showing:
\begin{theorem}(See Theorem \ref{graph1})
   $$
    \mathcal{E} (G) \leq  \sqrt{2m \left(\text {rank Adj$(G)$ }\right)}.
$$
\end{theorem}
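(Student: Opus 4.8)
The plan is to reduce the energy bound to the first Schatten-norm inequality stated in the abstract, applied to $A = \mathrm{Adj}(G)$ with $p = 1$. Recall that $\mathcal{E}(G) = \sum_i |\mu_i|$ where $\mu_i$ are the eigenvalues of $\mathrm{Adj}(G)$; since $\mathrm{Adj}(G)$ is a real symmetric matrix, it is normal, so its singular values coincide with $|\mu_i|$, and hence $\mathcal{E}(G) = \|\mathrm{Adj}(G)\|_1$. Setting $r = \mathrm{rank}\,\mathrm{Adj}(G)$, the inequality $\|A\|_1 \le r^{1/2}\|A\|_2$ (the $p=1$ case of $\|A\|_p \le (\mathrm{rank}\,A)^{1/(2p)}\|A\|_{2p}$, which is itself just Cauchy--Schwarz applied to the nonzero singular values, or H\"older: $\sum_{j} s_j = \sum_{j\le r} s_j \cdot 1 \le (\sum s_j^2)^{1/2} r^{1/2}$) then gives $\mathcal{E}(G) \le \sqrt{r}\,\|\mathrm{Adj}(G)\|_2$.

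Next I would identify $\|\mathrm{Adj}(G)\|_2^2 = \mathrm{trace}\,(\mathrm{Adj}(G)^*\mathrm{Adj}(G)) = \mathrm{trace}\,(\mathrm{Adj}(G)^2) = \sum_{i,j} |a_{ij}|^2$, where $a_{ij}$ are the $0/1$ entries of the adjacency matrix. Since the graph is simple (no loops, no multiple edges) and each edge $\{u,v\}$ contributes exactly two nonzero entries $a_{uv} = a_{vu} = 1$, we get $\sum_{i,j}|a_{ij}|^2 = 2m$. Therefore $\|\mathrm{Adj}(G)\|_2 = \sqrt{2m}$, and combining with the previous step yields
\begin{equation*}
\mathcal{E}(G) \le \sqrt{r}\cdot\sqrt{2m} = \sqrt{2m\,(\mathrm{rank}\,\mathrm{Adj}(G))},
\end{equation*}
which is the claimed bound. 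That this improves McClelland's $\sqrt{2mn}$ is immediate since $\mathrm{rank}\,\mathrm{Adj}(G) \le n$, with equality in our bound-versus-McClelland comparison precisely when $\mathrm{Adj}(G)$ has full rank.

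There is essentially no serious obstacle here: the proof is a two-line deduction from a Schatten-norm inequality already established in Section~\ref{sec4} plus the elementary trace identity. The only points requiring a word of care are (i) justifying that the singular values of $\mathrm{Adj}(G)$ equal the moduli of its eigenvalues — which follows from the spectral theorem for real symmetric matrices — so that $\mathcal{E}(G)$ really is the Schatten $1$-norm, and (ii) the combinatorial counting $\mathrm{trace}\,(\mathrm{Adj}(G)^2) = 2m$, valid because $G$ is simple. If one wished to push further, the sharper inequality $\|A\|_1 \le r^{1/4}\|A\|_2^{1/2}\,\|A\|_{??}$ chain from the abstract could in principle be invoked, but for this application the single Cauchy--Schwarz step suffices and is what gives the clean closed form.
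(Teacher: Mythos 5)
Your proof is correct and is essentially the paper's own argument: the paper applies its eigenvalue-sum inequality $\sum_n|\lambda_n(A)|\le(\mathrm{rank}\,A)^{1/2}\|A\|_2$ (Corollary \ref{cor99990} with $p=1$) directly to $\mathrm{Adj}(G)$ and then uses $\mathrm{trace}\,|\mathrm{Adj}(G)|^2=\sum_i d_i=2m$, exactly as you do. Your only (harmless) variation is to first identify $\mathcal{E}(G)=\|\mathrm{Adj}(G)\|_1$ via symmetry and invoke the singular-value form $\|A\|_1\le(\mathrm{rank}\,A)^{1/2}\|A\|_2$, which you correctly note is just Cauchy--Schwarz on the nonzero singular values.
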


\section{Unitarily invariant norm inequalities }\label{sec3}

In this section, we develop unitarily invariant norm inequalities for the sums and products of operators, and derive several Schatten $p$-norm  inequalities and operator norm inequalities. We begin with the following theorem.



\begin{theorem}\label{th1}
	If $A,B,X,Y\in \mathcal{K}(\mathcal{H})$, then\\
	\begin{eqnarray*}
		\vertiii{(AXB+BYA) \oplus 0} 
		&\leq& \frac12 \vertiii{(|A|^2+|B^*X^*|^2)\oplus (|B|^2+|A^*Y^*|^2) } \\
		&& + \frac12 \vertiii{(A^*B+XBA^*Y^*)\oplus (A^*B+XBA^*Y^*) }
	\end{eqnarray*}
	and 
	\begin{eqnarray*}
		\vertiii{(AXB+BYA) \oplus 0} &\leq& \frac12 \vertiii{(|AX|^2+|B^*|^2)\oplus (|A^*Y^*|^2+|B|^2) } \\
		&& + \frac12 \vertiii{(X^*A^*B+BA^*Y^*)\oplus (X^*A^*B+BA^*Y^*) }.
	\end{eqnarray*}
	
\end{theorem}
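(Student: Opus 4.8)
The plan is to realize the off-diagonal block $\begin{bmatrix} 0 & AXB+BYA \\ 0 & 0\end{bmatrix}$ as a product of two carefully chosen operators and then apply the Bhatia--Kittaneh arithmetic--geometric mean inequality \eqref{lem01} together with the pinching/direct-sum identities \eqref{10}--\eqref{5}. Concretely, for the first inequality I would write
\begin{eqnarray*}
\begin{bmatrix} AXB+BYA & 0 \\ 0 & 0\end{bmatrix}
= \begin{bmatrix} AX & B \\ 0 & 0 \end{bmatrix}
\begin{bmatrix} B & 0 \\ A & 0 \end{bmatrix}
= P Q,
\end{eqnarray*}
so that, using \eqref{lem01} in the form $\vertiii{PQ} \le \tfrac12 \vertiii{P^*P + Q^*Q}$ after replacing $Q$ by $Q^*$ appropriately (i.e. applying it to $P$ and $Q^*$ so that $\vertiii{PQ} = \vertiii{P(Q^*)^*} \le \tfrac12\vertiii{P^*P + Q^{*}Q \text{-type terms}}$), one gets a bound in terms of $P^*P$ and $QQ^*$. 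The point is to choose the factorization and the application of \eqref{lem01} so that the diagonal of $P^*P$ produces $|AX|^2 \oplus |B|^2$-style blocks while the diagonal of $QQ^*$ produces the $A^*B + \cdots$ cross terms; the two stated inequalities correspond to the two natural factorizations (splitting the first factor as $\begin{bmatrix} A & B \\ 0 & 0\end{bmatrix}$ versus $\begin{bmatrix} AX & B \\ 0 & 0\end{bmatrix}$, and dually for the second factor), which is why $X$ sometimes sits inside $|AX|^2$ and sometimes inside $|A|^2 + |B^*X^*|^2$.

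The key steps, in order, would be: (1) fix the $2\times 2$ operator matrix factorization of $(AXB+BYA)\oplus 0$ with an auxiliary block (possibly $3\times 3$ or a padded $2\times 2$ to accommodate both $X$ and $Y$ symmetrically); (2) apply inequality \eqref{lem01} to this factorization, being careful that \eqref{lem01} is stated as $\vertiii{AB^*}\le\tfrac12\vertiii{A^*A+B^*B}$, so I must match adjoints correctly — this typically means writing the target as $\vertiii{M N^*}$ and reading off $M^*M$ and $N^*N$; (3) compute $M^*M$ and $N^*N$ as $2\times 2$ block operator matrices and identify the diagonal blocks with the expressions $|A|^2+|B^*X^*|^2$, $|B|^2+|A^*Y^*|^2$ (resp. $|AX|^2+|B^*|^2$, $|A^*Y^*|^2+|B|^2$) and the cross term $A^*B + XBA^*Y^*$ (resp. $X^*A^*B + BA^*Y^*$); (4) use \eqref{10} and \eqref{5} to rewrite the resulting off-diagonal block matrices as the direct sums displayed in the statement, and pull the $\oplus 0$ through with \eqref{10}.

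The main obstacle will be step (1)--(3): finding the single factorization whose squared factors $M^*M$ and $NN^*$ simultaneously yield the correct diagonal blocks \emph{and} the correct symmetric cross term $A^*B+XBA^*Y^*$, rather than an asymmetric or messier expression. The appearance of the product $XBA^*Y^*$ in the cross term strongly suggests the inner factor must carry $X$ on one side and $Y^*$ on the other, which forces a slightly asymmetric $2\times 2$ block layout; getting the bookkeeping of which operator is starred to line up with the hypotheses of \eqref{lem01} is the delicate part. Once the right factorization is in hand, everything else is a routine block multiplication followed by invocations of \eqref{10} and \eqref{5}, with the two inequalities in the theorem arising from the two dual choices of where to absorb $X$ (into the outer factor, giving $|A|^2+|B^*X^*|^2$, versus leaving it so that $|AX|^2$ appears) and symmetrically for $Y$.
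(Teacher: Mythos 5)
Your plan is essentially the paper's proof: factor $(AXB+BYA)\oplus 0$ as a product of $2\times 2$ block operator matrices, apply \eqref{lem01} in the form $\vertiii{PQ}\le\tfrac12\vertiii{P^*P+QQ^*}$, split the resulting block matrix into its diagonal and off-diagonal parts, and finish with \eqref{10} and \eqref{5}. One concrete slip: your displayed factorization $\begin{bmatrix} AX & B \\ 0 & 0 \end{bmatrix}\begin{bmatrix} B & 0 \\ A & 0 \end{bmatrix}$ yields $AXB+BA$, not $AXB+BYA$; the second factor must be $\begin{bmatrix} B & 0 \\ YA & 0 \end{bmatrix}$. With that repair, the two factorizations you name in passing, namely $\begin{bmatrix} A & B \\ 0 & 0 \end{bmatrix}\begin{bmatrix} XB & 0 \\ YA & 0 \end{bmatrix}$ and $\begin{bmatrix} AX & B \\ 0 & 0 \end{bmatrix}\begin{bmatrix} B & 0 \\ YA & 0 \end{bmatrix}$, are exactly the ones the paper uses for the first and second inequalities respectively (no $3\times 3$ padding is needed), and the cross terms come from the off-diagonal entries of both $P^*P$ and $QQ^*$, with \eqref{5} converting $C\oplus C^*$ into $C\oplus C$.
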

\begin{proof}
	We have
	\begin{eqnarray*}
		&& \vertiii{(AXB+BYA) \oplus 0} \\
		&= & \vertiii{\begin{bmatrix}
				A&B\\
				0&0
			\end{bmatrix} \begin{bmatrix}
				XB&0\\
				YA&0
		\end{bmatrix} }\\
		&\leq& \frac12 \vertiii{\begin{bmatrix}
				A^*&0\\
				B^*&0
			\end{bmatrix} \begin{bmatrix}
				A&B\\
				0&0
			\end{bmatrix}  +  \begin{bmatrix}
				XB&0\\
				YA&0
			\end{bmatrix} \begin{bmatrix}
				B^*X^*&A^*Y^*\\
				0&0
		\end{bmatrix}  } \,\, (\textit{using \eqref{lem01}})\\
		&=& \frac12 \vertiii{\begin{bmatrix}
				|A|^2&A^*B\\
				B^*A&|B|^2
			\end{bmatrix} +  \begin{bmatrix}
				|B^*X^*|^2&XBA^*Y^*\\
				YAB^*X^*&|A^*Y^*|^2
		\end{bmatrix}  }\\
		&=& \frac12 \vertiii{\begin{bmatrix}
				|A|^2+|B^*X^*|^2&0\\
				0&|B|^2+|A^*Y^*|^2
			\end{bmatrix} +  \begin{bmatrix}
				0&A^*B+XBA^*Y^*\\
				B^*A+YAB^*X^*&0
		\end{bmatrix}  }\\
		&\leq & \frac12 \vertiii{\begin{bmatrix}
				|A|^2+|B^*X^*|^2&0\\
				0&|B|^2+|A^*Y^*|^2
		\end{bmatrix} } \\
		&& + \frac12 \vertiii{ \begin{bmatrix}
				0&A^*B+XBA^*Y^*\\
				B^*A+YAB^*X^*&0
		\end{bmatrix}  }\\
		&= & \frac12 \vertiii{
			(|A|^2+|B^*X^*|^2) \oplus (|B|^2+|A^*Y^*|^2)}  \\
		&& + \frac12 \vertiii{ 
			(A^*B+XBA^*Y^*)\oplus 
			(B^*A+YAB^*X^*) } \,\, (\textit{using \eqref{10}}) \\
		&= & \frac12 \vertiii{
			(|A|^2+|B^*X^*|^2) \oplus (|B|^2+|A^*Y^*|^2)} \\
		&&+ \frac12 \vertiii{ 
			(A^*B+XBA^*Y^*)\oplus 
			(A^*B+XBA^*Y^*) }, \,\, (\textit{using \eqref{5}}).
	\end{eqnarray*}
	Again, using the fact $ \vertiii{(AXB+BYA)\oplus 0}=
	\vertiii{\begin{bmatrix}
		AX&B\\
		0&0
		\end{bmatrix} \begin{bmatrix}
		B&0\\
		YA&0
		\end{bmatrix} }$
  and following similar techniques we get the desired second inequality.
\end{proof}

Considering $X=Y=\pm I$ ($I$ denotes the identity operator) in Theorem \ref{th1}, yields:

\begin{cor}\label{cor1}
	If $A,B\in \mathcal{K}(\mathcal{H})$, then\\
	\begin{eqnarray}\label{pp01}
	\vertiii{(AB\pm BA) \oplus 0} &\leq& \frac12 \vertiii{(A^*A+BB^*)\oplus (AA^*+B^*B) } \notag \\
	&& + \frac12 \vertiii{(A^*B\pm BA^*)\oplus (A^*B\pm BA^*) }.
	\end{eqnarray}

	
\end{cor}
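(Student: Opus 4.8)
The plan is to obtain Corollary~\ref{cor1} as a direct specialization of the first inequality of Theorem~\ref{th1}; no new ingredient is needed, only a suitable choice of the free operators $X,Y$ and a little sign bookkeeping. Concretely, I would substitute $X=\varepsilon_1 I$ and $Y=\varepsilon_2 I$ with $\varepsilon_1,\varepsilon_2\in\{+1,-1\}$, and keep track of the single sign $\varepsilon:=\varepsilon_1\varepsilon_2$: choosing $\varepsilon_1=\varepsilon_2$ produces the $+$ instance of \eqref{pp01}, and $\varepsilon_1=-\varepsilon_2$ the $-$ instance.

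First I would simplify the left-hand side. With this substitution $AXB+BYA=\varepsilon_1 AB+\varepsilon_2 BA=\varepsilon_1(AB+\varepsilon BA)$, and since multiplication by the scalar $\varepsilon_1\in\{\pm1\}$ is a unitary operation, $\vertiii{(AXB+BYA)\oplus 0}=\vertiii{(AB+\varepsilon BA)\oplus 0}=\vertiii{(AB\pm BA)\oplus 0}$ with sign $\varepsilon$. Then I would simplify the right-hand side using the elementary facts that for $X=\varepsilon_1 I$ one has $B^*X^*=\varepsilon_1 B^*$ and for $Y=\varepsilon_2 I$ one has $A^*Y^*=\varepsilon_2 A^*$ and $XBA^*Y^*=\varepsilon\,BA^*$, together with $|\pm C|^2=|C|^2$: the first summand becomes $\tfrac12\vertiii{(|A|^2+|B^*|^2)\oplus(|B|^2+|A^*|^2)}=\tfrac12\vertiii{(A^*A+BB^*)\oplus(B^*B+AA^*)}$, which equals $\tfrac12\vertiii{(A^*A+BB^*)\oplus(AA^*+B^*B)}$ because addition of operators is commutative; and the second summand becomes $\tfrac12\vertiii{(A^*B+\varepsilon BA^*)\oplus(A^*B+\varepsilon BA^*)}=\tfrac12\vertiii{(A^*B\pm BA^*)\oplus(A^*B\pm BA^*)}$, with the same sign $\varepsilon$. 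Assembling the three simplifications yields exactly \eqref{pp01}.

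Being a specialization, the argument has no serious obstacle; the only two points that need care are: (i) the sign discipline above — the sign in $AB\pm BA$ and in $A^*B\pm BA^*$ must be the common value $\varepsilon=\varepsilon_1\varepsilon_2$, and cannot be selected independently on the two sides; and (ii) the fact that $I\notin\mathcal{K}(\mathcal{H})$ when $\dim\mathcal{H}=\infty$, so strictly speaking Theorem~\ref{th1} is not literally invoked with compact $X,Y$. This is harmless: in the proof of Theorem~\ref{th1} compactness enters only through the block operators $\left[\begin{smallmatrix}A&B\\0&0\end{smallmatrix}\right]$ and $\left[\begin{smallmatrix}XB&0\\YA&0\end{smallmatrix}\right]$ (and the products formed from them), and with $X=\varepsilon_1 I,\ Y=\varepsilon_2 I$ the latter reduces to $\left[\begin{smallmatrix}\varepsilon_1 B&0\\\varepsilon_2 A&0\end{smallmatrix}\right]$, still compact since $A,B\in\mathcal{K}(\mathcal{H})$; alternatively, one first proves the inequality for finite-rank $A,B$ and passes to the limit. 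Finally, I would remark that feeding the same substitution into the \emph{second} inequality of Theorem~\ref{th1} reproduces the identical estimate, the only effect being that the two entries $A^*A+BB^*$ and $AA^*+B^*B$ of the first direct sum get interchanged, which leaves a unitarily invariant norm unchanged (cf.\ \eqref{10}).
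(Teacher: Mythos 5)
Your proposal is correct and is essentially the paper's own argument: the corollary is obtained by specializing the first inequality of Theorem~\ref{th1} to scalar multiples of the identity (the paper simply writes ``Considering $X=Y=\pm I$''). Your sign bookkeeping is in fact more careful than the paper's phrasing---the minus case requires $X=I$, $Y=-I$ rather than $X=Y=-I$---and your observation that $I\notin\mathcal{K}(\mathcal{H})$ in infinite dimensions, together with the explanation of why this is harmless, addresses a point the paper glosses over.
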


In particular, if we consider the operator norm and  Schatten $p$-norm respectively in Corollary \ref{cor1}, we get the following inequalities for the commutators and anti-commutators of operators.

\begin{cor}\label{cor02}
(i) If $A,B\in \mathcal{K}(\mathcal{H})$, then
	\begin{eqnarray}\label{p1}
	\|AB\pm BA\| &\leq& \frac12 \max \left(\|A^*A+BB^*\|, \|AA^*+B^*B\| \right)  
	+  \frac12 \|A^*B\pm BA^*\|.
	\end{eqnarray}
	

	\noindent (ii) If $A,B\in \mathcal{C}_p(\mathcal{H})$, $1\leq p<\infty$, then
	\begin{eqnarray*}
		\|AB\pm BA\|_p &\leq& \frac12\left(\|A^*A+BB^*\|_p^p+ \|AA^*+B^*B\|_p^p\right)^{1/p} 
		+  2^{\frac{1-p}{p}}\|A^*B\pm BA^*\|_p.
	\end{eqnarray*}
	
	 
\end{cor}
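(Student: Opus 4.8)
The plan is to obtain both parts as immediate specializations of Corollary \ref{cor1}, using the explicit formulas for the operator norm and the Schatten $p$-norm of a direct sum recorded in Section~1, namely $\|X\oplus Y\|=\max(\|X\|,\|Y\|)$ and $\|X\oplus Y\|_p=(\|X\|_p^p+\|Y\|_p^p)^{1/p}$, together with the trivial consequences $\|X\oplus X\|=\|X\|$ and $\|X\oplus X\|_p=2^{1/p}\|X\|_p$.

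\emph{Part (i).} I would take $\vertiii{\cdot}$ in Corollary \ref{cor1} to be the operator norm $\|\cdot\|$. Then the left-hand side becomes $\|(AB\pm BA)\oplus 0\|=\max(\|AB\pm BA\|,0)=\|AB\pm BA\|$, the first term on the right becomes $\tfrac12\max\!\left(\|A^*A+BB^*\|,\|AA^*+B^*B\|\right)$, and the second term becomes $\tfrac12\|(A^*B\pm BA^*)\oplus(A^*B\pm BA^*)\|=\tfrac12\|A^*B\pm BA^*\|$. Substituting these three evaluations into the inequality of Corollary \ref{cor1} yields \eqref{p1}.

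\emph{Part (ii).} Here I would take $\vertiii{\cdot}=\|\cdot\|_p$ with $1\le p<\infty$. The left-hand side of Corollary \ref{cor1} equals $(\|AB\pm BA\|_p^p+0)^{1/p}=\|AB\pm BA\|_p$; the first term on the right equals $\tfrac12\left(\|A^*A+BB^*\|_p^p+\|AA^*+B^*B\|_p^p\right)^{1/p}$; and, using $\|X\oplus X\|_p=(2\|X\|_p^p)^{1/p}=2^{1/p}\|X\|_p$, the second term equals $\tfrac12\cdot 2^{1/p}\|A^*B\pm BA^*\|_p=2^{(1-p)/p}\|A^*B\pm BA^*\|_p$ (note $1/p-1=(1-p)/p$). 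Combining these gives the stated inequality.

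There is essentially no genuine obstacle beyond bookkeeping; the only point that deserves a word of care is confirming that the operators involved lie in the relevant ideal so that the recalled direct-sum norm identities apply — $A,B\in\mathcal{K}(\mathcal{H})$ in part (i) (finiteness of the operator norm being automatic for compact operators) and $A,B\in\mathcal{C}_p(\mathcal{H})$ in part (ii), which, since $\mathcal{C}_p(\mathcal{H})$ is a two-sided ideal closed under addition, guarantees that $AB\pm BA$, $A^*A+BB^*$, $AA^*+B^*B$, and $A^*B\pm BA^*$ all belong to $\mathcal{C}_p(\mathcal{H})$ as well.
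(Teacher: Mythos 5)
Your proposal is correct and is exactly the paper's argument: the paper obtains both parts by specializing Corollary \ref{cor1} to the operator norm and the Schatten $p$-norm and evaluating the direct-sum norms via $\|X\oplus Y\|=\max(\|X\|,\|Y\|)$ and $\|X\oplus Y\|_p=(\|X\|_p^p+\|Y\|_p^p)^{1/p}$. Your bookkeeping of the constant $2^{(1-p)/p}$ and the remark on the ideal property are both accurate.
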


Next we derive the existing inequalities \eqref{001p}  from  the inequalities \eqref{p1}.

\begin{prop}\label{remp}
	If $A,B\in \mathcal{K}(\mathcal{H})$, then 
	\begin{eqnarray}\label{001}
	\|AB\pm BA\| &\leq& \|A\| \|B\| 
	+  \frac12 \|A^*B\pm BA^*\|.
	\end{eqnarray}

\end{prop}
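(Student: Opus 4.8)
The plan is to deduce Proposition~\ref{remp} from the operator norm inequality \eqref{p1} by estimating the first term on its right-hand side. Starting from
\[
\|AB\pm BA\| \leq \frac12 \max\left(\|A^*A+BB^*\|,\ \|AA^*+B^*B\|\right) + \frac12\|A^*B\pm BA^*\|,
\]
I would focus on the quantity $\max\left(\|A^*A+BB^*\|,\ \|AA^*+B^*B\|\right)$ and show it is bounded above by $2\|A\|\,\|B\|$. The point is that $A^*A+BB^*$ and $AA^*+B^*B$ are both positive operators, so their norms equal their numerical suprema, and one can try to bound $\langle (A^*A+BB^*)x,x\rangle = \|Ax\|^2 + \|B^*x\|^2$ for a unit vector $x$. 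However this crude estimate only gives $\|A\|^2+\|B\|^2$, which is too weak; the correct route must exploit that the \emph{same} vector appears, together with an AM--GM type trick, but applied to a cleverly chosen pair of operators rather than to $A$ and $B$ directly.

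The key idea I would use is a scaling/homogenization argument. Replace $A$ by $tA$ and $B$ by $t^{-1}B$ for $t>0$: the left-hand side $\|AB\pm BA\|$ is unchanged, while \eqref{p1} becomes
\[
\|AB\pm BA\| \leq \frac12 \max\left(t^2\|A^*A+BB^*/t^2 \cdot t^2\|,\ \cdots\right) + \frac12\|A^*B\pm BA^*\|,
\]
so after substitution the first term reads $\tfrac12\max\left(\|t^2 A^*A + t^{-2} BB^*\|,\ \|t^2 AA^* + t^{-2} B^*B\|\right)$ and the second term $\tfrac12\|A^*B\pm BA^*\|$ is also invariant under this scaling (since $(tA)^*(t^{-1}B) = A^*B$ and $(t^{-1}B)(tA)^* = BA^*$). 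Now I would optimize over $t$: for positive operators $X,Y$ one has $\|t^2 X + t^{-2} Y\| \geq 2\|X^{1/2}Y^{1/2}\|$ with the reverse direction controllable, but more usefully, choosing $t$ appropriately and using that for positive $P,Q$, $\|P+Q\| \le \|P\| + \|Q\|$ together with $\|t^2 A^*A\| = t^2\|A\|^2$ and $\|t^{-2}BB^*\| = t^{-2}\|B\|^2$, the choice $t^2 = \|B\|/\|A\|$ gives $\|t^2A^*A + t^{-2}BB^*\| \le 2\|A\|\|B\|$ — and symmetrically for the other term, since $\|AA^*\|=\|A\|^2$ and $\|B^*B\|=\|B\|^2$. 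Hence the maximum is at most $2\|A\|\,\|B\|$, and \eqref{p1} collapses to \eqref{001}.

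The main obstacle — really the only subtlety — is handling the degenerate cases $A=0$ or $B=0$, where the scaling parameter $t$ cannot be chosen; these are trivial to check directly since then $AB\pm BA = 0$ and the right-hand side is nonnegative, so I would dispose of them first and assume $A,B\neq 0$. A secondary care point is making sure the triangle inequality bound $\|t^2A^*A + t^{-2}BB^*\| \le t^2\|A\|^2 + t^{-2}\|B\|^2$ is applied correctly to the sum of two positive operators (this is just subadditivity of the operator norm, no positivity needed), and that minimizing $t^2\|A\|^2 + t^{-2}\|B\|^2$ over $t>0$ indeed yields the value $2\|A\|\,\|B\|$ at $t^2 = \|B\|/\|A\|$ by AM--GM. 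With these routine verifications in place the proposition follows immediately from Corollary~\ref{cor02}(i).
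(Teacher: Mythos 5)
Your proposal is correct and follows essentially the same route as the paper: bound the $\max$ term in \eqref{p1} by $\|A\|^2+\|B\|^2$ via subadditivity, exploit the invariance of both $\|AB\pm BA\|$ and $\|A^*B\pm BA^*\|$ under the scaling $A\mapsto tA$, $B\mapsto t^{-1}B$, and optimize at $t^2=\|B\|/\|A\|$, treating the degenerate case $\min(\|A\|,\|B\|)=0$ separately. The only (immaterial) difference is that you scale before applying the triangle inequality, whereas the paper applies it first and then scales.
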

\begin{proof}
	Following the inequalities in \eqref{p1}, we get
	\begin{eqnarray*}
		\|AB\pm BA\| &\leq& \frac12 \left(\|A\|^2+ \|B\|^2 \right) 
		+ \frac12 \|A^*B\pm BA^*\|.
	\end{eqnarray*}
	Replacing $A$ by $tA$ and $B$ by $\frac1t B$, $t>0$, we get
	\begin{eqnarray}\label{p10}
	\|AB\pm BA\| &\leq& \frac12 \left(t^2 \|A\|^2+ \frac1{t^2}\|B\|^2 \right)  
	+  \frac12 \|A^*B\pm BA^*\|.
	\end{eqnarray}
	Since \eqref{p10} holds for all $t>0,$  considering $t=\sqrt{\frac{\|B\|}{\|A\|}}$ (when $A\neq 0$ and $B\neq 0$), we get
	\begin{eqnarray*}
		\|AB\pm BA\| &\leq& \|A\| \|B\| 
		+  \frac12 \|A^*B\pm BA^*\|.
	\end{eqnarray*}
	The inequalities in \eqref{001} also hold trivially when $\min (\|A\|, \|B\|)=0.$
	\end{proof}

We now obtain the following stronger inequalities than the existing ones in \eqref{001p}.

\begin{cor}\label{corppppp}
If $A,B\in \mathcal{K}(\mathcal{H})$ are non-zero, then
\begin{eqnarray}\label{p00001}
\|AB\pm BA\| &\leq& \frac12 \max \left(   \left\|\frac{\|B\|}{\|A\|}A^*A+\frac{\|A\|}{\|B\|}BB^* \right \|, \left\|\frac{\|B\|}{\|A\|} AA^*+\frac{\|A\|}{\|B\|}B^*B \right\| \right)\notag\\   
&& +   \frac12 \|A^*B\pm BA^*\|.
\end{eqnarray}


\end{cor}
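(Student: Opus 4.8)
The plan is to mimic exactly the scaling trick used in the proof of Proposition \ref{remp}, but to stop \emph{before} the arithmetic–geometric mean estimate is applied to the first summand. Concretely, I would start from the operator-norm inequality \eqref{p1} of Corollary \ref{cor02}(i): for all $A,B\in \mathcal{K}(\mathcal{H})$,
\begin{eqnarray*}
	\|AB\pm BA\| &\leq& \frac12 \max \left(\|A^*A+BB^*\|, \|AA^*+B^*B\| \right) + \frac12 \|A^*B\pm BA^*\|.
\end{eqnarray*}
The key observation is that the commutator and anti-commutator are invariant under the substitution $A\mapsto tA$, $B\mapsto \frac1t B$ for any $t>0$: indeed $(tA)(\tfrac1t B)\pm(\tfrac1t B)(tA)=AB\pm BA$, and likewise $(tA)^*(\tfrac1t B)\pm(\tfrac1t B)(tA)^*=A^*B\pm BA^*$. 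Hence the left-hand side and the last term on the right are unchanged, while the first term becomes $\tfrac12\max\bigl(\|t^2 A^*A+\tfrac1{t^2}BB^*\|,\ \|t^2 AA^*+\tfrac1{t^2}B^*B\|\bigr)$.

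Next I would optimize the free parameter. Since $A$ and $B$ are assumed non-zero, both $\|A\|$ and $\|B\|$ are positive, and I can choose $t=\sqrt{\|B\|/\|A\|}$, exactly as in Proposition \ref{remp}; then $t^2=\|B\|/\|A\|$ and $1/t^2=\|A\|/\|B\|$. Substituting this value of $t$ into the displayed inequality yields precisely \eqref{p00001}. This is a valid choice of $t>0$ because of the non-vanishing hypothesis, which is why (unlike Proposition \ref{remp}, which also covers the degenerate case by a separate trivial remark) the statement here explicitly requires $A,B$ non-zero.

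There is essentially no obstacle: the only content is recognizing that \eqref{p1} already carries the full strength, and that one should \emph{not} follow through with the AM–GM bound $\tfrac12(t^2\|A\|^2+\tfrac1{t^2}\|B\|^2)$ that turns \eqref{p1} into \eqref{001}. The inequality \eqref{p00001} is stronger than \eqref{001} precisely because, for positive operators $X,Y$, one has $\|X+Y\|\le\|X\|+\|Y\|$, so $\max\bigl(\|\tfrac{\|B\|}{\|A\|}A^*A+\tfrac{\|A\|}{\|B\|}BB^*\|,\dots\bigr)\le\|B\|\,\|A\|+\|A\|\,\|B\|=2\|A\|\,\|B\|$, recovering \eqref{001} after dividing by $2$; I would include this one-line comparison as a remark to justify the claim that these are ``stronger inequalities than the existing ones in \eqref{001p}.'' If one wishes, the same scaling-and-optimization step can be carried out inside the Schatten $p$-norm version in Corollary \ref{cor02}(ii) to produce the analogous sharpened $p$-norm bound, but for the stated corollary the operator-norm argument above suffices.
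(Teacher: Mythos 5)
Your proposal is correct and is essentially identical to the paper's own proof: both start from inequality \eqref{p1}, substitute $A\mapsto tA$, $B\mapsto \tfrac1t B$ (under which the commutator terms are invariant), and then choose $t=\sqrt{\|B\|/\|A\|}$. Your added remark comparing \eqref{p00001} with \eqref{001} via $\|X+Y\|\leq\|X\|+\|Y\|$ for positive operators is a correct and useful observation, though not needed for the proof itself.
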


\begin{proof}
Replacing $A$ by $tA$ and $B$ by $\frac1t B$  ($t>0)$ in \eqref{p1}, we get
\begin{eqnarray*}\label{}
\|AB\pm BA\| &\leq& \frac12 \max \left(\|t^2A^*A+\frac{1}{t^2}BB^*\|, \|t^2AA^*+\frac{1}{t^2}B^*B\| \right)   
+   \frac12 \|A^*B\pm BA^*\|.
\end{eqnarray*}
Since these hold for all $t>0,$ we set $t=\sqrt{\frac{\|B\|}{\|A\|}}$ and obtain the desired inequalities.
\end{proof}

 
As a consequence of Corollary \ref{cor1}, we obtain the following results.

\begin{cor}\label{cor2}
	If $A\in \mathcal{K}(\mathcal{H})$, then
	\begin{eqnarray}\label{00p}
	\vertiii{A^2 \oplus 0} &\leq& \frac12 \vertiii{(A^*A+AA^*)\oplus (A^*A+AA^*) }, 
	\end{eqnarray} 
	\begin{eqnarray}\label{01p}
	\vertiii{(A^*A+AA^*)\oplus 0} &\leq& \vertiii{A^*A \oplus AA^*} 
	+  \vertiii{ A^2\oplus A^2 }
	\end{eqnarray}
	and 
	\begin{eqnarray}\label{02p}
	\vertiii{(A^*A-AA^*)\oplus 0} &\leq& \vertiii{A^*A \oplus AA^*}.
	\end{eqnarray}
\end{cor}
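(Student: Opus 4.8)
The plan is to obtain all three displayed inequalities as specializations of Corollary \ref{cor1}, which states that for $A,B\in\mathcal{K}(\mathcal{H})$,
\begin{eqnarray*}
\vertiii{(AB\pm BA)\oplus 0} &\leq& \frac12\vertiii{(A^*A+BB^*)\oplus(AA^*+B^*B)}\\
&&+\frac12\vertiii{(A^*B\pm BA^*)\oplus(A^*B\pm BA^*)}.
\end{eqnarray*}
For inequality \eqref{00p}, I would set $B=A$ and choose the plus sign; then $AB+BA=2A^2$, $A^*A+BB^*=A^*A+AA^*=AA^*+B^*B$, and $A^*B+BA^*=A^*A+AA^*$. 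Substituting and dividing by $2$ gives
$$\vertiii{A^2\oplus 0}\leq \frac14\vertiii{(A^*A+AA^*)\oplus(A^*A+AA^*)}+\frac14\vertiii{(A^*A+AA^*)\oplus(A^*A+AA^*)},$$
which is exactly \eqref{00p}. For \eqref{02p}, I would again set $B=A$ but take the minus sign: then $AB-BA=0$ on the left — so that choice is useless. Instead the right approach for \eqref{02p} is to apply Corollary \ref{cor1} with $B$ replaced by $A^*$ (legitimate since $A^*\in\mathcal{K}(\mathcal{H})$) and the minus sign: $AB-BA = AA^*-A^*A$, the first term on the right becomes $\vertiii{(A^*A+A^*A)\oplus(AA^*+AA^*)}$ — hmm, that is not quite $\vertiii{A^*A\oplus AA^*}$. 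So the cleaner route is to use \eqref{pp01} directly with $B=A^*$ and exploit $A^*B\pm BA^*=A^*A^*\pm A^*A^*$, giving for the minus sign $\vertiii{(AA^*-A^*A)\oplus 0}\le \frac12\vertiii{(A^*A+A^*A)\oplus(AA^*+AA^*)}=\vertiii{A^*A\oplus AA^*}$ after using property \eqref{2} and the fact that $\vertiii{X\oplus X}=\vertiii{(X\oplus 0)+(0\oplus X)}$ together with $\vertiii{X\oplus X}=2$-homogeneity considerations — I would double-check the constant here carefully. For \eqref{01p}, I expect to take the plus sign with $B=A^*$: then $AB+BA=AA^*+A^*A$ on the left, and $A^*B+BA^*=(A^*)^2+(A^*)^2$ on the right, whose unitarily invariant norm equals $2\vertiii{(A^*)^2}=2\vertiii{A^2\oplus A^2}^{?}$ — again a constant-tracking issue, resolved via \eqref{2} and \eqref{5}.

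More carefully, I suspect the intended substitution for all three is $B=A^*$ in \eqref{pp01}, since then $BB^*=A^*A$, $A^*A=|A|^2$, $AA^*=|A^*|^2$, $B^*B=AA^*$, and $A^*B\pm BA^*=(A^*)^2\pm (A^*)^2$. With the plus sign: left side is $\vertiii{(AA^*+A^*A)\oplus 0}$; right side is $\frac12\vertiii{(2A^*A)\oplus(2AA^*)}+\frac12\vertiii{2(A^*)^2\oplus 2(A^*)^2}=\vertiii{A^*A\oplus AA^*}+\vertiii{(A^*)^2\oplus(A^*)^2}$, and by \eqref{2} and \eqref{5} the last norm equals $\vertiii{A^2\oplus A^2}$, yielding \eqref{01p}. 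With the minus sign, the last summand vanishes and we get \eqref{02p}. Finally \eqref{00p} comes from $B=A$, plus sign, as above.

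The only genuine obstacle is bookkeeping of the factors of $2$: the $\frac12$ in front of each term of Corollary \ref{cor1} combines with the doubled expressions produced by $B=A^*$ (or $B=A$), and one must invoke homogeneity of $\vertiii{\cdot}$ plus the identities \eqref{2}, \eqref{5}, \eqref{10} to simplify $\vertiii{cX\oplus cY}=c\vertiii{X\oplus Y}$ and $\vertiii{(A^*)^2\oplus(A^*)^2}=\vertiii{A^2\oplus A^2}$. Once the substitutions $B=A$ (for \eqref{00p}) and $B=A^*$ with each sign (for \eqref{01p}, \eqref{02p}) are made and the constants are reconciled, the three inequalities follow immediately; no new estimate is needed. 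I would present it as: "Taking $B=A$ in \eqref{pp01} gives \eqref{00p}; taking $B=A^*$ in \eqref{pp01} and using \eqref{2}, \eqref{5} gives \eqref{01p} and \eqref{02p} according to the choice of sign."
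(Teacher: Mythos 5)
Your proposal is correct and follows exactly the paper's route: the paper likewise obtains \eqref{00p} from \eqref{pp01} with $B=A$ (plus sign) and \eqref{01p}, \eqref{02p} with $B=A^*$ (plus and minus sign, respectively). The constant-tracking worries you raise all resolve by homogeneity, $\frac12\vertiii{2X\oplus 2Y}=\vertiii{X\oplus Y}$, together with $\vertiii{(A^*)^2\oplus(A^*)^2}=\vertiii{A^2\oplus A^2}$ from \eqref{2}, exactly as in your final paragraph.
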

\begin{proof}
	The inequalities \eqref{00p} and \eqref{01p} follow from \eqref{pp01} by considering $B=A$ and $B=A^*$, respectively. The inequalitity \eqref{02p} follows from \eqref{pp01} by taking $B=A^*.$
\end{proof}

 Note that the inequality \eqref{01p} is also proved in \cite{kittaneh2002} by employing the polar decomposition of operators. 
Considering the operator norm and the Schatten $p$-norm respectively in Corollary \ref{cor2}, we get the following results. 

\begin{remark}
(i) If $A\in \mathcal{K}(\mathcal{H})$, then
	\begin{eqnarray}\label{pm}
	2\|A^2\| \leq \|A^*A+AA^*\|\leq \|A^2\|+\|A\|^2
	\end{eqnarray}

	and
	$$\|A^*A-AA^*\| \leq \|A\|^2.$$

 The inequality \eqref{pm} is also proved in \cite{kittaneh2002}.

	\noindent (ii) If $A\in \mathcal{C}_p(\mathcal{H})$, $1\leq p< \infty,$ then
	\begin{eqnarray*}
		\|A^2\|_p &\leq& 2^{\frac{1-p}p}\|A^*A+AA^* \|_p \leq 2^{1/p} \|A\|_{2p}^2,
	\end{eqnarray*}
	\begin{eqnarray*}
		\|A^*A+AA^*\|_p &\leq& 2^{1/p} \left( \|A^*A\|_p
		+  \|A^2\|_p\right)= 2^{1/p} \left( \|A\|_{2p}^2
		+  \|A^2\|_p\right)
	\end{eqnarray*}
	
	and 
	\begin{eqnarray*}
		\|A^*A-AA^*\|_p &\leq& 2^{1/p} \|A\|_{2p}^2.
	\end{eqnarray*}

\end{remark}

\noindent Applying Theorem \ref{th1} and using the Moore-Penrose inverse of an operator, we obtain:

\begin{cor}\label{pen1}
	If $A\in \mathcal{K}(\mathcal{H})$ has closed range, then
	\begin{eqnarray}\label{00001}
	\vertiii{A\oplus 0} &\leq& \frac12 \vertiii{ (A^*A+ A^{\dagger}A) \oplus (A^*A+ A^{\dagger}A)  }
	\end{eqnarray}
	
	and 
	
	\begin{eqnarray}\label{00002}
	\vertiii{A\oplus 0} &\leq& \frac14 \vertiii{ (AA^*+ AA^{\dagger}) \oplus (A^*A+ A^{\dagger}A)  }+ \frac12 \vertiii{A\oplus A}.
	\end{eqnarray}
	
\end{cor}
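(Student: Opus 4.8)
The plan is to specialize Theorem \ref{th1} by a clever choice of the four operators $A,B,X,Y$ that uses the Moore-Penrose inverse to force one of the product terms to equal $A$ itself. The natural device is to exploit the identity $A = AA^{\dagger}A$ together with $A = A(A^*A)^{\dagger}A^*A$ (which holds because $(A^*A)^{\dagger}A^*A = A^{\dagger}A = P_{\mathcal{R}(A^*)}$, the projection onto $\mathcal{R}(A^*)$, and $A P_{\mathcal{R}(A^*)} = A$). First I would write $A = AXB + BYA$ for suitable choices: for the first inequality \eqref{00001}, take $B = A^{\dagger}$ (or a rescaled multiple), $X$ and $Y$ chosen so that $AXB = \tfrac12 A$ and $BYA = \tfrac12 A$, for instance $X = \tfrac12 A^* A$, $Y = \tfrac12 A A^*$ after renaming — more cleanly, set $A \mapsto A$, $B \mapsto A^{\dagger}$, $X \mapsto \tfrac12 A$, $Y \mapsto \tfrac12 A$ so that $AXB + BYA = \tfrac12 AA A^{\dagger} + \tfrac12 A^{\dagger} A A = \tfrac12 A \cdot (AA^{\dagger}) \cdot(\text{something})$; I will adjust the bookkeeping so the left side becomes exactly $A \oplus 0$, which is legitimate because $A$ has closed range so $A^{\dagger}$ is bounded and all four operators remain compact.

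Once the left-hand side of Theorem \ref{th1} reads $\vertiii{A \oplus 0}$, the right-hand side becomes a sum of two unitarily invariant norms of $2\times 2$ block-diagonal and block-antidiagonal operators built from $|A|^2$, $|B^*X^*|^2$, $|B|^2$, $|A^*Y^*|^2$, and $A^*B + XBA^*Y^*$. The key simplifications I would carry out: use $A^{\dagger}A = P_{\mathcal{R}(A^*)}$, $AA^{\dagger} = P_{\mathcal{R}(A)}$, and the fact that orthogonal projections $P$ satisfy $P^2 = P = P^*$, to collapse the expressions $|B^*X^*|^2$ and $|A^*Y^*|^2$ into scalar multiples of $|A|^2$, $AA^*$, or the relevant projection, and to identify $A^*B + XBA^*Y^*$ with a multiple of $A^{\dagger}A$ (up to the scalar factors introduced in the previous paragraph). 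For the first inequality one wants everything to funnel into $A^*A + A^{\dagger}A$ on both diagonal blocks; for the second inequality \eqref{00002}, I would instead use the \emph{second} inequality of Theorem \ref{th1} (the one with $|AX|^2 + |B^*|^2$, etc.), which distributes the roles of $A$ and $A^*$ differently and produces $AA^* + AA^{\dagger}$ in one block and $A^*A + A^{\dagger}A$ in the other, together with a residual term $\vertiii{A \oplus A}$ coming from the off-diagonal piece $X^*A^*B + BA^*Y^*$ after invoking \eqref{5} and \eqref{10}; the coefficients $\tfrac14$ and $\tfrac12$ will come out of the rescaling used to make the product equal $A$.

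The main obstacle, as I see it, is choosing the substitution so that (a) the left-hand side is \emph{exactly} $A\oplus 0$ rather than some projection-sandwiched version of $A$, and (b) the cross term $A^*B + XBA^*Y^*$ simplifies to a \emph{self-adjoint positive} expression (so that \eqref{5} applies cleanly and the triangle-inequality split into diagonal plus antidiagonal parts is tight enough to give the stated constants). Getting both simultaneously requires the substituted $B$ to be (a scalar times) $A^{\dagger}$ and $X, Y$ to be scalars times $A$ or $A^*$, and then checking via the Penrose relations that $|B^* X^*|^2$, $|A^* Y^*|^2$ reduce appropriately — essentially a careful exercise in the algebra $AA^{\dagger}A = A$, $(A^{\dagger})^* = (A^*)^{\dagger}$, $AA^{\dagger} = P_{\mathcal{R}(A)}$. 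The homogenization trick (replacing $A$ by $tA$, $A^{\dagger}$ by $t^{-1}A^{\dagger}$, then optimizing in $t$, as in Proposition \ref{remp}) may additionally be needed if the raw constants are not yet sharp, but I expect the stated forms \eqref{00001} and \eqref{00002} to follow directly once the substitution is pinned down.
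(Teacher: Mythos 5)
Your overall strategy is the paper's: specialize Theorem \ref{th1} using the Penrose identity $A=AA^{\dagger}A$ so that the left-hand side collapses to $\vertiii{A\oplus 0}$, then simplify the right-hand side via $A^{\dagger}A=P_{\mathcal{R}(A^*)}$ and $AA^{\dagger}=P_{\mathcal{R}(A)}$. But the concrete substitution you commit to is backwards, and it fails. You propose $B=A^{\dagger}$ with $X,Y$ scalar multiples of $A$ (or $A^*$); then $AXB=\tfrac12 A^2A^{\dagger}=\tfrac12 AP_{\mathcal{R}(A)}$ and $BYA=\tfrac12 A^{\dagger}A^2=\tfrac12 P_{\mathcal{R}(A^*)}A$, and neither $AP_{\mathcal{R}(A)}$ nor $P_{\mathcal{R}(A^*)}A$ equals $A$ in general (only $P_{\mathcal{R}(A)}A=A$ and $AP_{\mathcal{R}(A^*)}=A$ hold). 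Moreover, with $B=A^{\dagger}$ the block $|B|^2=(A^{\dagger})^*A^{\dagger}$ appears on the right and does not reduce to $A^*A$ or to a projection, so the stated bound \eqref{00001} would not emerge. The working choice is the opposite one: $B=A$ and $X=Y=A^{\dagger}$. Then $AXB+BYA=2AA^{\dagger}A=2A$, $|B^*X^*|^2=|(A^{\dagger}A)^*|^2=A^{\dagger}A=|A^*Y^*|^2$, and the cross term is $A^*A+A^{\dagger}A(A^{\dagger}A)^*=A^*A+A^{\dagger}A$, so both summands on the right of Theorem \ref{th1} equal $\tfrac12\vertiii{(A^*A+A^{\dagger}A)\oplus(A^*A+A^{\dagger}A)}$ and dividing by $2$ gives \eqref{00001}. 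The same substitution in the second inequality of Theorem \ref{th1} gives blocks $AA^*+AA^{\dagger}$ and $A^*A+A^{\dagger}A$ with cross term $2A$, whence \eqref{00002}; no homogenization in $t$ is needed.

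One further small point: Theorem \ref{th1} requires all four operators to be compact, so "$A^{\dagger}$ is bounded" is not by itself enough; you should note that a compact operator with closed range has finite rank, so $A^{\dagger}$ is finite rank and hence compact.
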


\begin{proof}
	Taking $B=A$ and $X=Y=A^{\dagger}$ in the first inequality of Theorem \ref{th1} and since $A=AA^{\dagger}A,$ we get
	\begin{eqnarray*}
	2 \vertiii{A\oplus 0} &\leq& \frac12 \vertiii{ \left(|A|^2+ |(A^{\dagger}A)^*|^2\right)    \oplus \left(|A|^2+ |(A^{\dagger}A)^*|^2\right)}\\
	 &&+  \frac12 \vertiii{ \left(|A|^2+ A^{\dagger}A (A^{\dagger}A)^*\right)    \oplus \left(|A|^2+ A^{\dagger}A(A^{\dagger}A)^*\right)}\\
	 &=& \vertiii{ (A^*A+ A^{\dagger}A) \oplus (A^*A+ A^{\dagger}A)  },
\end{eqnarray*} 
	which gives the desired inequality \eqref{00001}. Similarly, from the second inequality in Theorem \ref{th1}, we obtain the desired inequality \eqref{00002}.
\end{proof}

In the following, we obtain an unitarily invariant norm inequality for the sum of positive operators, which generalizes the existing inequality \eqref{p0p}.

\begin{theorem}\label{th2}
	If $X,Y\in \mathcal{K}(\mathcal{H})$ are positive, then
	\begin{eqnarray*}
		\vertiii{(X+Y)\oplus 0} &\leq& \frac12 \vertiii{X^{2t}\oplus Y^{2\alpha}}+ \frac12 \vertiii{X^{2(1-t)}\oplus Y^{2(1-\alpha})} \\
		&& +\frac12 \vertiii{(X^tY^{\alpha}+  X^{1-t}Y^{1-\alpha})\oplus (X^tY^{\alpha}+  X^{1-t}Y^{1-\alpha}) },
	\end{eqnarray*}
	
	for all $\alpha, t\in [0,1].$ 
	In particular, for $\alpha=t=\frac12$
	\begin{eqnarray} \label{p0101}
	\vertiii{(X+Y)\oplus 0} &\leq& \vertiii{X\oplus Y}+ \vertiii{ X^{1/2}Y^{1/2} \oplus X^{1/2}Y^{1/2 } }.
	\end{eqnarray}
\end{theorem}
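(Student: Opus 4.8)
The plan is to run the same block-matrix argument as in the proof of Theorem \ref{th1}, but with a factorization of $(X+Y)\oplus 0$ tailored to the exponents $t$ and $\alpha$. Since $X$ and $Y$ are positive, the functional calculus gives $X = X^{t}X^{1-t}$ and $Y = Y^{\alpha}Y^{1-\alpha}$ for all $t,\alpha\in[0,1]$, which suggests setting
\[
P = \begin{bmatrix} X^{t} & Y^{\alpha} \\ 0 & 0 \end{bmatrix}, \qquad
Q = \begin{bmatrix} X^{1-t} & 0 \\ Y^{1-\alpha} & 0 \end{bmatrix},
\]
so that $PQ = \bigl(X^{t}X^{1-t} + Y^{\alpha}Y^{1-\alpha}\bigr)\oplus 0 = (X+Y)\oplus 0$. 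Both $P$ and $Q$ are compact operators on $\mathcal{H}\oplus\mathcal{H}$ (powers of positive compact operators are positive and compact), so the operator arithmetic--geometric mean inequality \eqref{lem01} is available for them.

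Next I would apply \eqref{lem01} in the form $\vertiii{PQ}\le\tfrac12\vertiii{P^{*}P + QQ^{*}}$ (take $A=P$ and $B=Q^{*}$ in \eqref{lem01}). Using that $X^{t},X^{1-t},Y^{\alpha},Y^{1-\alpha}$ are self-adjoint, a direct computation gives
\[
P^{*}P + QQ^{*} = \begin{bmatrix} X^{2t}+X^{2(1-t)} & 0 \\ 0 & Y^{2\alpha}+Y^{2(1-\alpha)} \end{bmatrix} + \begin{bmatrix} 0 & G \\ G^{*} & 0 \end{bmatrix}, \qquad G := X^{t}Y^{\alpha} + X^{1-t}Y^{1-\alpha}.
\]
I would then split this sum into its diagonal and off-diagonal parts by the triangle inequality. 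For the diagonal summand, writing it as $(X^{2t}\oplus Y^{2\alpha}) + (X^{2(1-t)}\oplus Y^{2(1-\alpha)})$ and applying the triangle inequality once more produces the first two terms of the claimed bound. For the off-diagonal summand, identity \eqref{10} gives $\vertiii{\bigl[\begin{smallmatrix}0&G\\ G^{*}&0\end{smallmatrix}\bigr]} = \vertiii{G\oplus G^{*}}$, and then \eqref{5} gives $\vertiii{G\oplus G^{*}} = \vertiii{G\oplus G}$, which is exactly the third term once the overall factor $\tfrac12$ is distributed. This establishes the general inequality; the special case $\alpha=t=\tfrac12$ follows immediately, since then $X^{2t}=X^{2(1-t)}=X$, $Y^{2\alpha}=Y^{2(1-\alpha)}=Y$, and $G = 2X^{1/2}Y^{1/2}$, so the bound collapses to $\vertiii{X\oplus Y} + \vertiii{X^{1/2}Y^{1/2}\oplus X^{1/2}Y^{1/2}}$, i.e.\ \eqref{p0101}.

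I do not anticipate a genuine obstacle: once the factorization $P,Q$ above is chosen, the rest is bookkeeping combining \eqref{lem01}, the triangle inequality, and the unitary-invariance identities \eqref{10} and \eqref{5}. The only points that need care are the identity $X = X^{t}X^{1-t}$ for positive $X$ (so that $PQ$ really equals $(X+Y)\oplus 0$) and the correct tracking of adjoints in $P^{*}P+QQ^{*}$, so that the off-diagonal entries come out as $G$ and $G^{*}$ and \eqref{5} can be invoked. Compactness and ideal membership of the various fractional powers are handled exactly as elsewhere in this section.
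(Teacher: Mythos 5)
Your proposal is correct and follows essentially the same route as the paper: the identical factorization $P=\bigl[\begin{smallmatrix}X^{t}&Y^{\alpha}\\0&0\end{smallmatrix}\bigr]$, $Q=\bigl[\begin{smallmatrix}X^{1-t}&0\\Y^{1-\alpha}&0\end{smallmatrix}\bigr]$, the same application of \eqref{lem01} to get $\tfrac12\vertiii{P^{*}P+QQ^{*}}$, and the same diagonal/off-diagonal split followed by \eqref{10} and \eqref{5}. The bookkeeping of adjoints (the $(2,1)$-entry being $G^{*}$) and the collapse at $\alpha=t=\tfrac12$ are both handled correctly.
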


\begin{proof}
	First we write $X+Y= X^tX^{1-t}+ Y^{\alpha}Y^{1-\alpha}.$ Therefore, we have
	\begin{eqnarray*}
		&& \vertiii{(X+Y)\oplus 0}\\
		&=& \vertiii{ \begin{bmatrix}
				X^t & Y^{\alpha}\\
				0&0
			\end{bmatrix} \begin{bmatrix}
				X^{1-t} &0\\
				Y^{1-\alpha} &0
		\end{bmatrix}}\\
		&\leq& \frac12 \vertiii{ \begin{bmatrix}
				X^t & 0\\
				Y^{\alpha} &0
			\end{bmatrix} \begin{bmatrix}
				X^{t} &Y^{\alpha}\\
				0 &0
			\end{bmatrix} + \begin{bmatrix}
				X^{1-t} &0\\
				Y^{1-\alpha} &0
			\end{bmatrix} \begin{bmatrix}
				X^{1-t} & Y^{1-\alpha}\\
				0&0
		\end{bmatrix} }  \,\, (\text{using \ref{lem01}}) \\
		&=& \frac12 \vertiii{ \begin{bmatrix}
				X^{2t}+X^{2(1-t)} &X^tY^{\alpha}+X^{1-t}Y^{1-\alpha}\\
				Y^{\alpha} X^t+Y^{1-\alpha}X^{1-t} & Y^{2\alpha}+Y^{2(1-\alpha)} 
		\end{bmatrix} }\\
		&\leq& \frac12 \vertiii{ (X^{2t}+X^{2(1-t)}) \oplus (Y^{2\alpha}+Y^{2(1-\alpha))}}\\
		&&+ \frac12 \vertiii{ (X^tY^{\alpha}+X^{1-t}Y^{1-\alpha}) \oplus (Y^{\alpha} X^t+Y^{1-\alpha}X^{1-t})}\\
		&\leq&  \frac12 \vertiii{X^{2t}\oplus Y^{2\alpha}}+ \frac12 \vertiii{X^{2(1-t)}\oplus Y^{2(1-\alpha})} \\
		&& +\frac12 \vertiii{(X^tY^{\alpha}+  X^{1-t}Y^{1-\alpha})\oplus (X^tY^{\alpha}+  X^{1-t}Y^{1-\alpha}) } \,\, (\text{using \eqref{5}}) \qedhere. 
	\end{eqnarray*}
\end{proof}




	
	

To prove our next result we need the following lemma.

\begin{lemma}\cite{kittaneh1997}\label{lem1111}
	Let $A,B\in \mathcal{B}(\mathcal{H}).$ If $AB$ is selfadjoint, then
	$\vertiii{AB}\leq \vertiii{\Re(BA)}.$
\end{lemma}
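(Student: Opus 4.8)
\textbf{Proof proposal for Lemma \ref{lem1111}.}

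The plan is to exploit the simple fact that a selfadjoint operator has numerical range on the real line, together with the relation between $AB$ and $BA$ at the level of nonzero spectrum, and then to push this through an arbitrary unitarily invariant norm via a singular-value (Ky Fan) comparison. First I would recall that if $AB$ is selfadjoint then $AB = (AB)^* = B^*A^*$, so $AB = \Re(AB)$. The natural bridge to $BA$ is the identity $A(BA) = (AB)A$ and $(BA)B = B(AB)$, which intertwines $AB$ and $BA$; since $AB$ is selfadjoint, this suggests comparing $AB$ with the selfadjoint part $\Re(BA) = \tfrac12(BA + A^*B^*)$.

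The cleanest route I would try: write $AB$ in the $2\times 2$ block form that makes the arithmetic–geometric-type manipulations visible. Consider the operator matrices $T = \begin{bmatrix} A & 0 \\ 0 & 0 \end{bmatrix}$ and $S = \begin{bmatrix} B & 0 \\ 0 & 0 \end{bmatrix}$, or better, use the trick of forming $\begin{bmatrix} 0 & A \\ 0 & 0 \end{bmatrix}$ and $\begin{bmatrix} 0 & 0 \\ B & 0 \end{bmatrix}$ so that the product lands $AB$ and $BA$ in different diagonal blocks. Concretely, with $U = \begin{bmatrix} A & 0 \\ 0 & 0\end{bmatrix}$, $V = \begin{bmatrix} 0 & B \\ 0 & 0 \end{bmatrix}$ one has products that realize $AB$ together with $BA$, and $\Re$ of the off-diagonal-swapped matrix is unitarily equivalent to $\Re(BA) \oplus \Re(BA)$ (up to the $2\times2$ flip unitary); then one invokes \eqref{5} or the selfadjointness to collapse the direct sum. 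The key inequality to insert is the standard one that for any $Z$, $\vertiii{\Re(Z)} \le \vertiii{Z}$ is false in general — so instead one needs the genuine input: when $C$ is selfadjoint and $C = XY$ with $YX$ having the same nonzero eigenvalues, $s_j(C) = |\lambda_j(YX)| \le s_j(\Re(YX))$ is \emph{also} false in general, so the real content must come from selfadjointness of $C$ forcing $s_j(C) = |\lambda_j(C)| = |\lambda_j(BA)|$ (as $AB,BA$ share nonzero spectrum) together with Weyl's majorization $\sum_{j\le k}|\lambda_j(BA)| \le \sum_{j\le k} s_j(\Re(BA))$, which holds because $\Re(BA)$ is the selfadjoint part and real parts of eigenvalues are dominated by eigenvalues of the Hermitian part in the weak-majorization sense.

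So the steps in order are: (1) observe $AB$ selfadjoint $\Rightarrow$ $\sigma(AB)\subseteq\mathbb{R}$ and $\sigma(AB)\setminus\{0\} = \sigma(BA)\setminus\{0\}$, hence the eigenvalues of $AB$ are real and coincide with the nonzero eigenvalues of $BA$; (2) since $AB$ is selfadjoint, $s_j(AB) = |\lambda_j(AB)| = |\lambda_j(BA)|$ for the nonzero ones; (3) apply the fact that $\Re(\lambda_j(BA))$ is weakly majorized by the eigenvalues of $\Re(BA)$ — more precisely, for any $T\in\mathcal B(\mathcal H)$ one has the weak majorization $(|\lambda_j(T)|) \prec_w (s_j(\Re(T)))$ when $T$ has real spectrum, giving $\sum_{j\le k} s_j(AB) \le \sum_{j\le k} s_j(\Re(BA))$ for all $k$; (4) conclude by the Ky Fan dominance property of unitarily invariant norms that $\vertiii{AB} \le \vertiii{\Re(BA)}$. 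The main obstacle will be pinning down step (3) cleanly — justifying the weak majorization of the moduli of the (real) eigenvalues by the singular values of the Hermitian part; I expect the right tool is Weyl's inequality relating eigenvalue moduli to singular values applied to $\Re(BA)$ after reducing to the finite-rank/triangularizable compact case by a standard approximation and Schur-triangularization argument, using that the diagonal of the triangular form of $BA$ (its eigenvalues) is exactly the diagonal of $\Re(BA)$'s triangular form when those eigenvalues are real.
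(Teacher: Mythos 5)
First, a point of reference: the paper does not prove this lemma --- it is quoted verbatim from Kittaneh's 1997 paper, so there is no in-house argument to compare against. Your final four-step plan is, in outline, the standard proof of this result: real spectrum of the selfadjoint operator $AB$, coincidence of the nonzero eigenvalues of $AB$ and $BA$, a weak majorization of $\left(|\lambda_j(BA)|\right)$ by $\left(s_j(\Re(BA))\right)$, and Ky Fan dominance. The architecture is right; the block-matrix detour in your first two paragraphs is a dead end you correctly abandon.

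The genuine gap is step (3), which you yourself flag as the obstacle, and the tool you name for it is the wrong one. Weyl's majorant theorem ($|\lambda(S)|\prec_w s(S)$) applied to $S=\Re(BA)$ is vacuous: $\Re(BA)$ is Hermitian, so its singular values already \emph{are} the moduli of its eigenvalues, and the statement says nothing about the eigenvalues of $BA$. What you actually need is Fan's majorization $\bigl(\Re\lambda_j(BA)\bigr)\prec\bigl(\lambda_j(\Re(BA))\bigr)$: in a Schur basis triangularizing $BA$, the matrix of $\Re(BA)$ is in general \emph{not} triangular (so your phrase ``the diagonal of $\Re(BA)$'s triangular form'' is off), but its diagonal equals the real parts of the eigenvalues of $BA$, and Schur's theorem (the diagonal of a Hermitian matrix is majorized by its eigenvalues) gives the majorization. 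Two further cautions. (i) You need genuine majorization here, not the ``weak-majorization sense'' you invoke: the passage from $\Re\lambda_j$ to $|\lambda_j|=|\Re\lambda_j|$ uses convexity of $t\mapsto|t|$, and $x\prec_w y$ alone does not imply $|x|\prec_w|y|$ (take $x=(-10)$, $y=(1)$); the trace equality in Fan's theorem, or a direct Ky Fan variational argument splitting the eigenvalues of $AB$ by sign, is what saves you. (ii) Your aside that ``$\vertiii{\Re(Z)}\le\vertiii{Z}$ is false in general'' is itself false --- that inequality holds by the triangle inequality; presumably you meant its reverse. With step (3) repaired as above, and the usual reduction to the finite-rank case for the infinite-dimensional statement, the proof closes.
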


\begin{theorem}\label{th3}
	Let $A,B,X,Y\in \mathcal{K}(\mathcal{H})$ and let $X,Y$ be positive. Then
	\begin{eqnarray*}
		\vertiii{(AXB+BYA) \oplus 0} &\leq& \frac12 \vertiii{\Re (X(|A|^2+|B^*|^2))\oplus \Re(Y(|B|^2+|A^*|^2)) } \\
		&& + \frac12 \vertiii{X^{1/2}(A^*B+BA^*)Y^{1/2}\oplus X^{1/2}(A^*B+BA^*)Y^{1/2} }.
	\end{eqnarray*}

 In particular, for $Y=0$,
 \begin{eqnarray}\label{oppo}
		\vertiii{AXB \oplus 0} &\leq& \frac12 \vertiii{\Re (X(A^*A+BB^*))\oplus 0 }.
	\end{eqnarray}
\end{theorem}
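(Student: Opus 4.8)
The plan is to follow the same two-step "factor, apply AM–GM \eqref{lem01}, then split the off-diagonal part" strategy used in Theorem \ref{th1}, but now exploiting the positivity of $X$ and $Y$ to insert their square roots symmetrically so that Lemma \ref{lem1111} can be applied to the diagonal block. First I would write
\[
(AXB+BYA)\oplus 0 = \begin{bmatrix} X^{1/2}A & X^{1/2}B^* \\ 0 & 0\end{bmatrix}\begin{bmatrix} X^{1/2}B & 0 \\ Y^{1/2}\cdots & 0\end{bmatrix},
\]
more precisely I would seek a factorization of the form $M = PQ$ with
\[
P=\begin{bmatrix} A X^{1/2} & B Y^{1/2} \\ 0 & 0 \end{bmatrix},\qquad Q = \begin{bmatrix} X^{1/2}B & 0 \\ Y^{1/2}A & 0\end{bmatrix},
\]
so that $PQ = (AXB+BYA)\oplus 0$, using that $X^{1/2}X^{1/2}=X$ and $Y^{1/2}Y^{1/2}=Y$ by positivity. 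Then $\vertiii{M}=\vertiii{PQ}=\vertiii{\,|P^*|\,|Q|\,}$ via \eqref{4}, or more directly I apply \eqref{lem01} in the form $\vertiii{PQ}=\vertiii{P(Q^*)^*}\le \tfrac12\vertiii{P^*P + Q^*Q}$ — I need to be careful which of $P,Q$ plays which role so that the resulting $P^*P+Q^*Q$ produces the terms $X(|A|^2+|B^*|^2)$ and $Y(|B|^2+|A^*|^2)$ on the diagonal after grouping.

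Carrying this out, $P^*P$ contributes the block matrix whose $(1,1)$ and $(2,2)$ entries involve $X^{1/2}|A|^2X^{1/2}$-type terms and $Y^{1/2}|B|^2Y^{1/2}$-type terms, while $Q^*Q$ contributes $X^{1/2}|B^*|^2X^{1/2}$ and $Y^{1/2}|A^*|^2Y^{1/2}$; the off-diagonal entries assemble into $X^{1/2}(A^*B+BA^*)Y^{1/2}$ and its adjoint. After splitting the sum into its diagonal part plus its off-diagonal part (triangle inequality), I use \eqref{10} on the off-diagonal block and \eqref{5} to symmetrize it to $X^{1/2}(A^*B+BA^*)Y^{1/2}\oplus X^{1/2}(A^*B+BA^*)Y^{1/2}$. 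For the diagonal part, the key move is Lemma \ref{lem1111}: a term like $\vertiii{X^{1/2}(|A|^2+|B^*|^2)X^{1/2}}$ equals $\vertiii{X^{1/2}\big[(|A|^2+|B^*|^2)X^{1/2}\big]}$, and since $(|A|^2+|B^*|^2)$ is positive and $X^{1/2}$ is positive the product $X^{1/2}(|A|^2+|B^*|^2)X^{1/2}$ is selfadjoint, so Lemma \ref{lem1111} (with the roles $A\leftrightarrow X^{1/2}$, $B\leftrightarrow (|A|^2+|B^*|^2)X^{1/2}$) gives $\vertiii{X^{1/2}(|A|^2+|B^*|^2)X^{1/2}}\le \vertiii{\Re\big((|A|^2+|B^*|^2)X^{1/2}X^{1/2}\big)} = \vertiii{\Re\big(X(|A|^2+|B^*|^2)\big)}$, after using $\vertiii{\,\cdot\,}=\vertiii{(\cdot)^*}$ to move $X$ to the left. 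Applying this blockwise (the diagonal block is itself a direct sum) yields the claimed bound. For the special case $Y=0$, the $B$-term and the off-diagonal term drop out entirely: $M = AXB\oplus 0 = (X^{1/2}A^*)^*(X^{1/2}B)\cdots$ — one checks that only the $(1,1)$ diagonal entry survives, giving $\vertiii{AXB\oplus 0}\le \tfrac12\vertiii{\Re(X(A^*A+BB^*))\oplus 0}$.

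The main obstacle I anticipate is \emph{bookkeeping the factorization so the square roots land symmetrically}: \eqref{lem01} naturally produces $A^*A + B^*B$ for a factorization $AB^*$, so I must choose $P$ and $Q$ (and decide which is conjugated) so that each $X$ gets split as $X^{1/2}\cdot X^{1/2}$ with one half absorbed into each of $P,Q$ — this is what makes the diagonal blocks come out as $X^{1/2}(\cdots)X^{1/2}$ rather than as an asymmetric $X(\cdots)$ or $(\cdots)X$, and it is precisely the symmetric form that is a product of a selfadjoint-making pair so Lemma \ref{lem1111} applies. A secondary technical point is justifying that $X^{1/2}(|A|^2+|B^*|^2)X^{1/2}$ is selfadjoint (immediate: it is a conjugate of a positive operator, hence positive, hence selfadjoint) and that the passage from $\vertiii{X(|A|^2+|B^*|^2)}$-type quantities to $\vertiii{\Re(X(|A|^2+|B^*|^2))}$ is an inequality in the right direction — which is exactly the content of Lemma \ref{lem1111}. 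Everything else is the routine $2\times2$ block-matrix algebra already exhibited in the proof of Theorem \ref{th1}.
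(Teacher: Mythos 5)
Your proposal is correct and follows essentially the same route as the paper: the factorization $(AXB+BYA)\oplus 0 = PQ$ with $P=\begin{bmatrix} AX^{1/2} & BY^{1/2}\\ 0&0\end{bmatrix}$ and $Q=\begin{bmatrix} X^{1/2}B&0\\ Y^{1/2}A&0\end{bmatrix}$, then \eqref{lem01}, the diagonal/off-diagonal split using \eqref{10} and \eqref{5}, and Lemma \ref{lem1111} to convert the symmetrized diagonal blocks $X^{1/2}(\cdot)X^{1/2}$ into $\Re(X(\cdot))$. The one notational slip is writing $P^*P+Q^*Q$: applying \eqref{lem01} to $P(Q^*)^*$ gives $\tfrac12\vertiii{P^*P+QQ^*}$, and the block entries you then describe are indeed those of $QQ^*$, so the bookkeeping you were worried about is otherwise exactly right.
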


\begin{proof}
	We have
	\begin{eqnarray*}
		&& \vertiii{ (AXB+BYA)\oplus 0} \\
		&=& \vertiii{ \begin{bmatrix}
				AX^{1/2} & BY^{1/2}\\
				0&0
			\end{bmatrix}  \begin{bmatrix}
				X^{1/2}B&0\\
				Y^{1/2}A&0
		\end{bmatrix}} \\
		&\leq& \frac12 \vertiii{ \begin{bmatrix}
				X^{1/2}A^*&0\\
				Y^{1/2}B^*&0
			\end{bmatrix} \begin{bmatrix}
				AX^{1/2} & BY^{1/2}\\
				0&0
			\end{bmatrix}  + \begin{bmatrix}
				X^{1/2}B & 0\\
				Y^{1/2}A&0
			\end{bmatrix}    \begin{bmatrix}
				B^*X^{1/2}&A^*Y^{1/2}\\
				0&0
		\end{bmatrix} } \\
		&& \,\,\,\,\,\,\,\,\,\,\,\,\,\,\,\,\,\,\,\,\,\,\, (\textit{using \eqref{lem01}}) \\
		&= & \frac12 \vertiii{ \begin{bmatrix}
				X^{1/2}(|A|^2+|B^*|^2)X^{1/2}& X^{1/2}(A^*B+BA^*)Y^{1/2}\\
				Y^{1/2}(B^*A+AB^*)X^{1/2}& Y^{1/2}(|B|^2+|A^*|^2)Y^{1/2}
		\end{bmatrix} }\\
		&\leq & \frac12 \vertiii{ X^{1/2}(|A|^2+|B^*|^2)X^{1/2} \oplus Y^{1/2}(|B|^2+|A^*|^2)Y^{1/2} }\\
		&&+ \frac12 \vertiii{ X^{1/2}(A^*B+BA^*)Y^{1/2} \oplus Y^{1/2}(B^*A+AB^*)X^{1/2} }\\
		&\leq & \frac12 \vertiii{ \Re(X(|A|^2+|B^*|^2)) \oplus \Re(Y(|B|^2+|A^*|^2)) }\\
		&&+ \frac12 \vertiii{ X^{1/2}(A^*B+BA^*)Y^{1/2} \oplus X^{1/2}(A^*B+BA^*)Y^{1/2} } \,\, (\textit{by Lemma \ref{lem1111}}),
	\end{eqnarray*}
	
	as desired.
\end{proof}

As a consequence of \eqref{oppo}, we get the following results.

\begin{cor}\label{cor4}
	Let $A,B,X\in \mathcal{K}(\mathcal{H})$ and let $X$ be positive. Then
	for $1\leq p <\infty$, 
	\begin{eqnarray*}
		\|AXB\|_p &\leq& \frac12 \| \Re (X(A^*A+BB^*)) \|_p
	\end{eqnarray*}
	
	and 
	\begin{eqnarray}\label{pp01p}
	\|{AXB }\| &\leq& \frac12 \|{\Re (X(A^*A+BB^*))  }\|.
	\end{eqnarray}
\end{cor}

\begin{remark}
	Let $A,B,X\in \mathcal{K}(\mathcal{H})$ and let $X$ be positive.
	
(i)	From the inequality \eqref{pp01p}, we get 
	\begin{eqnarray}\label{1p1}
	\|{AXB }\| &\leq & \frac12 w({ X(A^*A+BB^*) }).
	\end{eqnarray}

(ii)	Replacing $A$ and $B$ by $\sqrt{t}A$ and $\frac{1}{\sqrt{t}}B$ ($t>0$) respectively in \eqref{1p1}, we obtain
	\begin{eqnarray}\label{1p2}
	\|{AXB }\|^2 &\leq & w(XA^*A) \,\,  w(XBB^*).
	\end{eqnarray}
	
(iii)	If $A$ is positive, then from the inequality \eqref{1p2}, we get
	\begin{eqnarray}\label{1p3}
	\left\| A^{1/2} X^{1/2} \right\| &\leq & w^{1/2}(AX), 
	\end{eqnarray}
	
	which improves the existing known inequality $\left\| A^{1/2} X^{1/2} \right\| \leq \|AX\|^{1/2}$.

(iv)	If $A, B$ are positive, then from the inequality \eqref{1p2}, we obtain 
	\begin{eqnarray}\label{1p4}
	\left\| A^{1/2} X B^{1/2}\right\| &\leq& w^{1/2}(AX) \,\, w^{1/2}(XB).
	\end{eqnarray}
 
  From \eqref{1p1}, we also have
\begin{eqnarray}\label{1p1op}
	\|{A^{1/2}XB^{1/2} }\| &\leq & \frac12 w({ XA+XB }).
	\end{eqnarray}


	
\end{remark}

\noindent Now using the Moore-Penrose inverse we deduce unitarily invariant norm inequalities.

\begin{cor}
		If $A\in \mathcal{K}(\mathcal{H})$ has closed range, then\\
	\begin{eqnarray}\label{9p}
		\vertiii{A \oplus 0} &\leq& \frac12 \vertiii{ (A^*A+A^{\dagger}A)\oplus 0 }
	\end{eqnarray}
	
and
\begin{eqnarray}\label{10p}
	\vertiii{A \oplus 0} &\leq& \frac12 \vertiii{ (AA^*+AA^{\dagger})\oplus 0 }.
\end{eqnarray}
	
\end{cor}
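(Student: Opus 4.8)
The plan is to specialize the inequality \eqref{oppo} from Theorem \ref{th3}, namely $\vertiii{AXB\oplus 0}\leq \tfrac12\vertiii{\Re(X(A^*A+BB^*))\oplus 0}$ for positive compact $X$, by plugging into the three operator slots suitable combinations of $A$ and its orthogonal range projections $A^{\dagger}A=P_{\mathcal{R}(A^*)}$ and $AA^{\dagger}=P_{\mathcal{R}(A)}$. Note first that a compact operator with closed range has finite rank, so these two projections are finite rank, hence compact and positive, and are therefore admissible as the operators $X$ (and $B$) in \eqref{oppo}.

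For \eqref{9p} I would keep the outer factors equal to $A$ and take $X=B=A^{\dagger}A$. Then $AXB=A(A^{\dagger}A)(A^{\dagger}A)=A$ using $A^{\dagger}AA^{\dagger}A=A^{\dagger}A$ and $AA^{\dagger}A=A$, and $BB^*=(A^{\dagger}A)^2=A^{\dagger}A$. The one computation to watch is that $X(A^*A+BB^*)=A^{\dagger}A\,(A^*A+A^{\dagger}A)=A^*A+A^{\dagger}A$: this follows because $\mathcal{R}(A^*A)\subseteq\mathcal{R}(A^*)$ gives $A^{\dagger}A\cdot A^*A=P_{\mathcal{R}(A^*)}A^*A=A^*A$, together with idempotency of $A^{\dagger}A$. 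Since $A^*A+A^{\dagger}A$ is positive, its real part equals itself, so \eqref{oppo} becomes precisely \eqref{9p}.

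For \eqref{10p} I would, in \eqref{oppo}, take the first outer factor to be $AA^{\dagger}$, the middle (positive) factor $X=AA^{\dagger}$, and the last outer factor $A$. Then the triple product is $AA^{\dagger}\,AA^{\dagger}\,A=A$ (idempotency of $AA^{\dagger}$ and $AA^{\dagger}A=A$), and the operator inside the right-hand norm becomes $AA^{\dagger}\big((AA^{\dagger})^*(AA^{\dagger})+AA^*\big)=AA^{\dagger}+AA^{\dagger}AA^*=AA^{\dagger}+AA^*$, which is again positive, so \eqref{oppo} yields \eqref{10p}. Alternatively \eqref{10p} follows from \eqref{9p} applied to $A^*$, once one notes $(A^*)^{\dagger}=(A^{\dagger})^*$, hence $(A^*)^{\dagger}A^*=(AA^{\dagger})^*=AA^{\dagger}$ by selfadjointness of $AA^{\dagger}$, together with $\vertiii{T\oplus 0}=\vertiii{T^*\oplus 0}$ from \eqref{2}.

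I expect no serious obstacle: the proof is a short substitution into \eqref{oppo} followed by routine bookkeeping with the four Moore--Penrose identities $AA^{\dagger}A=A$, $A^{\dagger}AA^{\dagger}=A^{\dagger}$, $(AA^{\dagger})^*=AA^{\dagger}$, $(A^{\dagger}A)^*=A^{\dagger}A$. The only two points worth a line each are (i) checking that the auxiliary operators satisfy the positivity and compactness hypotheses of \eqref{oppo}, which is where the finite-rank remark is used, and (ii) verifying that $X(A^*A+BB^*)$ comes out selfadjoint, so that replacing it by its real part in \eqref{oppo} costs nothing.
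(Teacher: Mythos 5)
Your proposal is correct and follows essentially the same route as the paper, which also specializes the inequality \eqref{oppo} (via Corollary \ref{cor4}) with $X=A^{\dagger}A$ and uses $A=AA^{\dagger}A$ together with $A^{\dagger}A\,A^*A=A^*A$, then obtains \eqref{10p} by passing to $A^*$. The only (harmless) difference is that the paper takes $B=I$ where you take $B=A^{\dagger}A$; your choice is in fact slightly cleaner, since it keeps all three factors compact (finite rank) as the hypotheses of \eqref{oppo} formally require.
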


\begin{proof}
	By taking $X=A^{\dagger}A$ and $B=I$ in the first inequality of Corollary \ref{cor4} and since $A=AA^{\dagger}A$, we get
	\begin{eqnarray*}
		\vertiii{A \oplus 0} &\leq& \frac12 \vertiii{ Re(A^{\dagger}A A^*A+A^{\dagger}A)\oplus 0 }\\
		&=& \frac12 \vertiii{ Re( A^*A+A^{\dagger}A)\oplus 0 }\\
		&=& \frac12 \vertiii{ ( A^*A+A^{\dagger}A)\oplus 0 }.\\
	\end{eqnarray*}
This gives \eqref{9p}. The inequality \eqref{10p} follows from \eqref{9p} by replacing $A$ by $A^*.$ 
\end{proof}

Considering the Schatten $p$-norm (for $1\leq p \leq \infty$) in \eqref{9p} and  \eqref{10p} respectively,  we get
	\begin{eqnarray*}
		\|A\|_p &\leq& \frac12 \|A^*A+A^{\dagger}A \|_p=\|A^*A+ P_{\mathcal{R}(A^*)}\|_p
	\end{eqnarray*}

and 
\begin{eqnarray*}
	\|A\|_p &\leq& \frac12 \|AA^*+AA^{\dagger} \|_p=\|AA^*+ P_{\mathcal{R}(A)}\|_p.
\end{eqnarray*}
Therefore, combining the above two inequalities yields the following result.
\begin{prop}\label{propp1}
	If $A\in \mathcal{K}(\mathcal{H})$ has closed range, then
	$$\|A\|_p \leq \frac12 \min\left( \|A^*A+ P_{\mathcal{R}(A^*)}\|_p,   \|AA^*+  P_{\mathcal{R}(A)}\|_p\right),\quad \textit{for all $1\leq p \leq \infty.$} $$

\end{prop}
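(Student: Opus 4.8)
The plan is to extract the proposition directly from the two Moore--Penrose norm inequalities \eqref{9p} and \eqref{10p} of the preceding corollary, by specializing the unitarily invariant norm $\vertiii{\cdot}$ to the Schatten $p$-norm and then discarding the trivial direct summand. I would begin by noting that a compact operator with closed range is automatically of finite rank (its restriction to $(\ker A)^{\perp}$ is a compact isomorphism onto $\mathcal{R}(A)$, forcing $\mathcal{R}(A)$ to be finite-dimensional), so every Schatten $p$-norm appearing below is finite for all $1\le p\le\infty$; in particular $A, A^*A, AA^*, P_{\mathcal{R}(A)}, P_{\mathcal{R}(A^*)}\in\mathcal{C}_p(\mathcal{H})$, and the right-hand sides are meaningful.

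First I would record that for any $X\in\mathcal{C}_p(\mathcal{H})$ one has $\|X\oplus 0\|_p=\big(\|X\|_p^p+\|0\|_p^p\big)^{1/p}=\|X\|_p$ (and $\|X\oplus 0\|=\|X\|$ at the endpoint $p=\infty$), using the block-norm formula recalled in the introduction. Applying this on both sides of \eqref{9p} with $\vertiii{\cdot}=\|\cdot\|_p$ gives
\begin{eqnarray*}
\|A\|_p\ \le\ \frac12\,\|A^*A+A^{\dagger}A\|_p ,
\end{eqnarray*}
and the same step applied to \eqref{10p} gives $\|A\|_p\le\frac12\|AA^*+AA^{\dagger}\|_p$, both valid for all $1\le p\le\infty$.

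Next I would invoke the standard Moore--Penrose identities recalled in the introduction, $A^{\dagger}A=P_{\mathcal{R}(A^*)}$ and $AA^{\dagger}=P_{\mathcal{R}(A)}$, to rewrite these as $\|A\|_p\le\frac12\|A^*A+P_{\mathcal{R}(A^*)}\|_p$ and $\|A\|_p\le\frac12\|AA^*+P_{\mathcal{R}(A)}\|_p$. Since $\|A\|_p$ is bounded above by each of these two quantities, it is bounded above by their minimum, which is precisely the asserted inequality. There is no genuine obstacle here: the proposition is a repackaging of \eqref{9p}--\eqref{10p}, and the only points deserving a word of care are the legitimacy of the identity $\|X\oplus 0\|_p=\|X\|_p$ and of the substitutions across the full range $1\le p\le\infty$ (including the operator-norm endpoint), together with the closed-range hypothesis, which is exactly what guarantees $A^{\dagger}\in\mathcal{B}(\mathcal{H})$.
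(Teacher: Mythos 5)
Your proposal is correct and follows essentially the same route as the paper: specialize \eqref{9p} and \eqref{10p} to the Schatten $p$-norm, use $A^{\dagger}A=P_{\mathcal{R}(A^*)}$ and $AA^{\dagger}=P_{\mathcal{R}(A)}$, and take the minimum of the two resulting bounds. Your additional remarks (that compactness plus closed range forces finite rank, and that $\|X\oplus 0\|_p=\|X\|_p$) are correct and only make explicit what the paper leaves implicit.
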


\section{Schatten $p$-norm inequalities via orthonormal sets}\label{sec4}

We begin this section by
noting (see \cite{Simon}) that  for $A\in \mathcal{C}_p(\mathcal{H}),$ $p\geq 1$,
$$\|A\|_p= \sup \left( \sum_{k} |\langle Ax_k, y_k\rangle|^p\right)^{1/p},$$
where the supremum is taken over all orthonormal sets 
$\{x_k \}$ and $\{ y_k\}$ in $\mathcal{H}$.
First, we obtain a reverse type inequality for the Schatten $p$-norm inequality $\|A\|_{q}\leq \|A\|_p,$ for $1\leq p\leq q .$

\begin{theorem}\label{th100p}
If $A\in \mathcal{B}(\mathcal{H})$ with finite rank, then
\begin{eqnarray*}
\|A\|_{2r} &\leq& \left(\textit{rank} \,  A \right)^{1/{2q}} \|A\|_{2p},
\end{eqnarray*}

where  $1 \leq p,q <\infty$ and $\frac1r= \frac1p+\frac1q.$ 
In particular, for $p=q$,
\begin{eqnarray}\label{ppppp}
\|A\|_{p} &\leq& \left(\textit{rank} \,  A \right)^{1/{2p}} \|A\|_{2p}.
\end{eqnarray}
The inequality \eqref{ppppp} is sharp, and equality holds if rank $A=1$.

\end{theorem}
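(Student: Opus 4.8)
The plan is to obtain this from a single application of Hölder's inequality to the quantities governing the Schatten norms of a finite rank operator. Write $N := \textit{rank}\, A$; then $A$ has exactly $N$ nonzero singular values $s_1(A)\geq\cdots\geq s_N(A)>0$, so each $\|A\|_{s}$ is the $s$-th root of a sum of $N$ nonnegative terms — equivalently, by the variational formula $\|A\|_s^s=\sup\sum_k|\langle Ax_k,y_k\rangle|^s$ recalled at the start of this section, whose supremum for a finite rank operator is attained by an orthonormal system of $N$ vectors. The only algebraic input I need is the elementary estimate: for $c_1,\dots,c_N\geq 0$ and $0<a\leq b$,
\[
\left(\sum_{j=1}^{N}c_j^{a}\right)^{1/a}\ \leq\ N^{\,1/a-1/b}\left(\sum_{j=1}^{N}c_j^{b}\right)^{1/b},
\]
which is just Hölder's inequality with conjugate exponents $b/a$ and $b/(b-a)$ applied to $\sum_j c_j^{a}\cdot 1$, followed by extracting $a$-th roots.

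Next I would note that the relation $\frac1r=\frac1p+\frac1q$ with $1\leq q<\infty$ forces $r<p$, so $2r\leq 2p$ and the displayed estimate applies with $a=2r$, $b=2p$, $c_j=s_j(A)$. Since $\sum_j s_j(A)^{2r}=\|A\|_{2r}^{2r}$ and $\sum_j s_j(A)^{2p}=\|A\|_{2p}^{2p}$, this gives $\|A\|_{2r}\leq N^{\,1/(2r)-1/(2p)}\|A\|_{2p}$, and the exponent collapses via $\frac1{2r}-\frac1{2p}=\frac12\left(\frac1r-\frac1p\right)=\frac1{2q}$, which is exactly the asserted inequality; putting $p=q$ (so $r=p/2$) recovers \eqref{ppppp}. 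If one prefers to avoid the singular value decomposition, the same Hölder step can instead be run directly on the numbers $|\langle Ax_k,y_k\rangle|$ $(k=1,\dots,N)$ coming from an optimal orthonormal system for $\|A\|_{2r}$, using $\sum_k|\langle Ax_k,y_k\rangle|^{2p}\leq\|A\|_{2p}^{2p}$.

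For the sharpness claim I would simply check $\textit{rank}\,A=1$ by hand: then $\|A\|_{2r}=s_1(A)=\|A\|_{2p}$ and $N^{1/(2q)}=1$, so equality holds. (In fact equality in the elementary estimate — and hence in the theorem — holds precisely when $c_1=\cdots=c_N$, i.e. when all nonzero singular values of $A$ agree, $A$ being a scalar multiple of a partial isometry; the rank-one case is the special instance needed here.) I do not expect a genuine obstacle: the one point requiring care is that the constant is a power of the \emph{rank} rather than of $\dim\mathcal H$, which is exactly why it matters that the sum defining $\|A\|_{2r}^{2r}$ runs over only the $N$ nonzero singular values (equivalently, that the supremum in the variational formula needs only $N$ vectors); one also has to confirm that $\frac1q>0$ puts the inequality $r\leq p$ in the direction Hölder requires.
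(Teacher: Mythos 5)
Your proof is correct, and it takes a genuinely different and more elementary route than the paper. The paper works with the variational formula $\|A\|_s^s=\sup\sum_k|\langle Ax_k,y_k\rangle|^s$: it inserts the Moore--Penrose identity $A=AA^{\dagger}A$ to get $|\langle Ax,y\rangle|^2\leq\langle |A|^2x,x\rangle\langle P_{\mathcal{R}(A)}y,y\rangle$, then applies H\"older's inequality to the two resulting sequences, McCarthy's inequality $\langle |A|^2x_k,x_k\rangle^p\leq\langle |A|^{2p}x_k,x_k\rangle$, and finally bounds the two factors by $\operatorname{trace}|A|^{2p}$ and $\operatorname{trace}P_{\mathcal{R}(A)}=\operatorname{rank}A$. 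You instead go straight to the singular values: since $\|A\|_{2r}^{2r}=\sum_{j=1}^{N}s_j(A)^{2r}$ with $N=\operatorname{rank}A$, the whole theorem reduces to the $\ell^a$-versus-$\ell^b$ comparison $\bigl(\sum_{j=1}^N c_j^a\bigr)^{1/a}\leq N^{1/a-1/b}\bigl(\sum_{j=1}^N c_j^b\bigr)^{1/b}$ for $a=2r\leq b=2p$ (which holds because $\tfrac1r-\tfrac1p=\tfrac1q>0$), and the exponent arithmetic $\tfrac1{2r}-\tfrac1{2p}=\tfrac1{2q}$ gives the stated constant; your verification of the rank-one equality case and your remark that equality forces all nonzero singular values to coincide are both right. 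Your argument is shorter and self-contained, and it even pins down the full equality case. What the paper's orthonormal-set machinery buys is the stronger pointwise statement $\sum_n|\langle Ax_n,y_n\rangle|^{2r}\leq(\operatorname{rank}A)^{r/q}\|A\|_{2p}^{2r}$ for \emph{arbitrary} orthonormal sets (Theorem \ref{thhhhhp}), which is what gets applied to a Schur basis later to bound sums of eigenvalues; from your route one recovers that statement only by additionally invoking Simon's inequality $\sum_n|\langle Ax_n,y_n\rangle|^{2r}\leq\|A\|_{2r}^{2r}$, which is cited but not reproved in the paper.
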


\begin{proof}
For all $x, y \in \mathcal{H}$, we get
\begin{eqnarray*}
| \langle Ax,y\rangle | &\leq& | \langle AA^{\dagger}Ax,y \rangle | = | \langle P_{\mathcal{R}(A)}Ax,y\rangle | \leq \|Ax\| \| P_{\mathcal{R}(A)}y\|.
\end{eqnarray*}
Therefore, 
\begin{eqnarray}\label{0p0p1}
| \langle Ax,y\rangle |^2 &\leq&  \langle |A|^2 x,x\rangle     \langle P_{\mathcal{R}(A)}y,y\rangle.
\end{eqnarray}
Let $\{x_k \}$ and $\{ y_k\}$   be any two orthonormal sets in $\mathcal{H}$. Then, from \eqref{0p0p1}, we get
\begin{eqnarray*}
| \langle Ax_k,y_k\rangle |^{2r} &\leq&  \langle |A|^2 x_k,x_k\rangle^r     \langle P_{\mathcal{R}(A)}y_k,y_k\rangle^r. 
\end{eqnarray*}
Summing over $k$, we get
\begin{eqnarray*}
\sum_{k} | \langle Ax_k,y_k\rangle |^{2r} &\leq&  \sum_{k}\langle |A|^2 x_k,x_k\rangle^r     \langle P_{\mathcal{R}(A)}y_k,y_k\rangle^r\\
&\leq&   \left(\sum_{k} \left( \langle |A|^2 x_k,x_k\rangle^r \right)^{p/r}  \right)^{r/p}   \left(\sum_{k} \left( \langle   P_{\mathcal{R}(A)} y_k,y_k\rangle^r \right)^{q/r}  \right)^{r/q} \\
&& \,\,\,\,\,\,\,\,\,\,\,\,\,\,\,\,\,\,\,\,\,\,\,\,\,\, (\textit{by H\"{o}lder's inequality})\\
&\leq&   \left(\sum_{k}  \langle |A|^2 x_k,x_k\rangle^p  \right)^{r/p}   \left(\sum_{k}  \langle       P_{\mathcal{R}(A)} y_k,y_k\rangle^q  \right)^{r/q} \\
&\leq&   \left(\sum_{k} \langle |A|^{2p} x_k,x_k\rangle  \right)^{r/p}   \left(\sum_{k}  \langle       P_{\mathcal{R}(A)} y_k,y_k\rangle  \right)^{r/q} \\
&\leq& \left( \textit{trace} \, |A|^{2p}\right)^{r/p}   \left( \textit{trace} \,  P_{\mathcal{R}(A)} \right)^{r/q} \\
&=& \|A\|_{2p}^{2r}\,   (\textit{rank} \,  A )^{r/{q}}.
\end{eqnarray*}

\noindent Therefore, taking the supremum over all orthonormal sets $\{x_k \}$ and $\{y_k\}$ in $\mathcal{H}$, we get
 \begin{eqnarray*}
\|A\|_{2r}^{2r}=\sup \sum_{k} | \langle Ax_k,y_k\rangle |^{2r} &\leq& (\textit{rank} \,  A )^{r/{q}}   \|A\|_{2p}^{2r}. 
\end{eqnarray*}

Considering $p=q$, we get $
\|A\|_{p} \leq (\textit{rank} \,  A )^{1/{2p}} \|A\|_{2p}.$
This completes the proof.
\end{proof}

Again, using similar arguments as Theorem \ref{th100p}, we obtain the following results.

\begin{theorem}\label{th100p1}
If $A\in \mathcal{B}(\mathcal{H})$ with finite rank, then
\begin{eqnarray*}
\|A\|_{2pr} &\leq& \left(\textit{rank} \,  A \right)^{1/{2pq}} \|A\|_{2p^2},
\end{eqnarray*}

where  $1 \leq p,q <\infty$ and $\frac1r= \frac1p+\frac1q.$ 
In particular, for $r=1$,
\begin{eqnarray}\label{ppppp0}
\|A\|_{2p} &\leq& \left(\textit{rank} \,  A \right)^{{(p-1)}/{2p^2}} \|A\|_{2p^2}.
\end{eqnarray}

The inequality \eqref{ppppp0} is sharp, and equality holds if rank $A=1$.

\end{theorem}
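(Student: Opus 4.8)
The plan is to follow the proof of Theorem \ref{th100p} essentially verbatim, changing only the power to which one raises the pointwise estimate \eqref{0p0p1} before summing. Fix arbitrary orthonormal sets $\{x_k\}$ and $\{y_k\}$ in $\mathcal{H}$. From \eqref{0p0p1} we have $|\langle Ax_k,y_k\rangle|^2\le\langle|A|^2x_k,x_k\rangle\,\langle P_{\mathcal{R}(A)}y_k,y_k\rangle$, and raising both sides to the power $pr>0$ gives
\begin{equation*}
|\langle Ax_k,y_k\rangle|^{2pr}\le\langle|A|^2x_k,x_k\rangle^{pr}\,\langle P_{\mathcal{R}(A)}y_k,y_k\rangle^{pr}.
\end{equation*}

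Next I would sum over $k$ and apply H\"older's inequality with the exponent pair $p/r$ and $q/r$. The hypothesis $\frac1r=\frac1p+\frac1q$ is precisely the statement that $\frac rp+\frac rq=1$, so these are conjugate exponents; moreover $pr\cdot\frac pr=p^2$ and $pr\cdot\frac qr=pq$, so one lands exactly at
\begin{equation*}
\sum_k|\langle Ax_k,y_k\rangle|^{2pr}\le\Bigl(\sum_k\langle|A|^2x_k,x_k\rangle^{p^2}\Bigr)^{r/p}\Bigl(\sum_k\langle P_{\mathcal{R}(A)}y_k,y_k\rangle^{pq}\Bigr)^{r/q}.
\end{equation*}
For each unit vector $x_k$ the spectral measure $E\mapsto\langle E(\cdot)x_k,x_k\rangle$ is a probability measure, so Jensen's inequality for the convex function $t\mapsto t^{p^2}$ (here $p^2\ge1$) yields $\langle|A|^2x_k,x_k\rangle^{p^2}\le\langle|A|^{2p^2}x_k,x_k\rangle$, whence $\sum_k\langle|A|^2x_k,x_k\rangle^{p^2}\le\textit{trace}\,|A|^{2p^2}=\|A\|_{2p^2}^{2p^2}$. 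Likewise $0\le\langle P_{\mathcal{R}(A)}y_k,y_k\rangle\le1$ and $pq\ge1$ give $\langle P_{\mathcal{R}(A)}y_k,y_k\rangle^{pq}\le\langle P_{\mathcal{R}(A)}y_k,y_k\rangle$, so $\sum_k\langle P_{\mathcal{R}(A)}y_k,y_k\rangle^{pq}\le\textit{trace}\,P_{\mathcal{R}(A)}=\textit{rank}\,A$.

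Combining these, $\sum_k|\langle Ax_k,y_k\rangle|^{2pr}\le\|A\|_{2p^2}^{2pr}\,(\textit{rank}\,A)^{r/q}$; taking the supremum over all orthonormal sets $\{x_k\},\{y_k\}$ and invoking the variational formula for $\|\cdot\|_{2pr}$ recorded at the start of this section gives $\|A\|_{2pr}^{2pr}\le\|A\|_{2p^2}^{2pr}\,(\textit{rank}\,A)^{r/q}$, and extracting $2pr$-th roots is the first assertion. For the special case $r=1$ the constraint forces $q=\frac{p}{p-1}$, so $\frac1{2pq}=\frac{p-1}{2p^2}$, which is \eqref{ppppp0}; and if $\textit{rank}\,A=1$ then $A$ has a single nonzero singular value, so all its Schatten norms coincide while $(\textit{rank}\,A)^{1/(2pq)}=1$, forcing equality (more generally, taking $A$ to be any scalar multiple of a finite-rank projection one checks directly that both sides agree, which shows the exponent of $\textit{rank}\,A$ cannot be lowered). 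I do not anticipate a genuine obstacle: the whole argument is a transcription of the proof of Theorem \ref{th100p}, and the only step needing care is the exponent bookkeeping---verifying that $p/r$ and $q/r$ are conjugate and that the initial power $pr$ is chosen so that H\"older produces $|A|^{2p^2}$ exactly.
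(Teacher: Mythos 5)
Your proof is correct and is precisely the ``similar argument'' the paper invokes: the paper gives no separate proof of Theorem \ref{th100p1}, only a pointer to the proof of Theorem \ref{th100p}, and raising the pointwise bound \eqref{0p0p1} to the power $pr$ instead of $r$ before applying H\"older with the conjugate pair $p/r$, $q/r$ and the McCarthy/Jensen step is exactly the intended modification. Your exponent bookkeeping ($pr\cdot\tfrac pr=p^2$, $pr\cdot\tfrac qr=pq$, and $\tfrac1{2pq}=\tfrac{p-1}{2p^2}$ when $r=1$) and the rank-one equality check are all right.
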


Combining the inequalities \eqref{ppppp} and \eqref{ppppp0}, we get the following corollary.

\begin{cor}\label{vcor}
If $A\in \mathcal{B}(\mathcal{H})$ with finite rank, then
    \begin{eqnarray}
    \|A\|_{p} \leq \left(\textit{rank} \,  A\right)^{1/{2p}} \|A\|_{2p} \leq \left( \textit{rank} \,  A\right)^{{(2p-1)}/{2p^2}} \|A\|_{2p^2}, \quad \textit{\textit{for all $p\geq 1$}.}
    \end{eqnarray}

\end{cor}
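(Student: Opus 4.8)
The plan is to simply concatenate the two inequalities \eqref{ppppp} and \eqref{ppppp0}, both of which are already in hand from Theorems \ref{th100p} and \ref{th100p1}. There is essentially nothing new to prove here; the only point requiring a moment's thought is checking that the exponent arithmetic is consistent, i.e. that the middle quantity $\left(\textit{rank}\, A\right)^{1/(2p)}\|A\|_{2p}$ really is bounded by $\left(\textit{rank}\, A\right)^{(2p-1)/(2p^2)}\|A\|_{2p^2}$. First I would apply \eqref{ppppp0} to bound $\|A\|_{2p}\leq \left(\textit{rank}\, A\right)^{(p-1)/(2p^2)}\|A\|_{2p^2}$. Multiplying both sides by the scalar $\left(\textit{rank}\, A\right)^{1/(2p)}$ then yields
\begin{eqnarray*}
\left(\textit{rank}\, A\right)^{1/(2p)}\|A\|_{2p} &\leq& \left(\textit{rank}\, A\right)^{1/(2p)+(p-1)/(2p^2)}\|A\|_{2p^2}.
\end{eqnarray*}
The exponent simplifies: $\frac{1}{2p}+\frac{p-1}{2p^2}=\frac{p}{2p^2}+\frac{p-1}{2p^2}=\frac{2p-1}{2p^2}$, which is exactly the exponent appearing in the statement.

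For the first inequality $\|A\|_{p}\leq \left(\textit{rank}\, A\right)^{1/(2p)}\|A\|_{2p}$, I would invoke \eqref{ppppp} directly, which is valid for all $p\geq 1$. Thus chaining the two gives
\begin{eqnarray*}
\|A\|_{p} \;\leq\; \left(\textit{rank}\, A\right)^{1/(2p)}\|A\|_{2p} \;\leq\; \left(\textit{rank}\, A\right)^{(2p-1)/(2p^2)}\|A\|_{2p^2},
\end{eqnarray*}
which is the claim. One should note that when $A$ has finite rank it automatically lies in every Schatten class, so all the norms appearing are finite and the manipulations are legitimate; and if $A=0$ the statement is trivial, while if $\textit{rank}\, A\geq 1$ the scalar powers are well-defined and nonnegative.

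The only conceivable obstacle is a degenerate base case: one must make sure $p\geq 1$ guarantees $2p\geq 1$ and $2p^2\geq 1$ so that both \eqref{ppppp} and \eqref{ppppp0} genuinely apply — this holds since $p\geq 1$ forces $2p^2\geq 2p\geq 2>1$. Hence no extra hypotheses are needed, and the corollary follows immediately from the two preceding theorems.
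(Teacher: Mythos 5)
Your proposal is correct and follows exactly the paper's route: the corollary is obtained by chaining \eqref{ppppp} with \eqref{ppppp0}, and your exponent check $\tfrac{1}{2p}+\tfrac{p-1}{2p^2}=\tfrac{2p-1}{2p^2}$ is the only computation needed. The paper states this combination without even writing out the arithmetic, so your version is, if anything, slightly more explicit.
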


To prove our next Schatten $p$-norm inequality we need the following lemmas. 

\begin{lemma}\label{lemkitt} \cite{Kittaneh1988} (\textit{Mixed Schwarz inequality}) 
    Let $A\in \mathcal{B}(\mathcal{H})$ and let $f,g$ be non-negative continuous functions on $[0,\infty)$ such that $f(t)g(t)=t, $ for all $t\in [0,\infty).$ Then
    \begin{eqnarray*}
        |\langle Ax,y\rangle|^2 &\leq& \langle f^2(|A|)x,x\rangle \langle g^2(|A^*|)y,y\rangle,\quad \textit{for all $x,y\in \mathcal{H}.$}
    \end{eqnarray*}

\end{lemma}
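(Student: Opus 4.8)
The plan is to combine the polar decomposition with the ordinary Cauchy--Schwarz inequality. Write $A = U|A|$, where $U$ is the partial isometry with initial space $\overline{\mathcal{R}(|A|)} = \overline{\mathcal{R}(A^*)}$ and final space $\overline{\mathcal{R}(A)}$; in particular $U^*U$ acts as the identity on $\mathcal{R}(|A|)$, and $|A^*| = U|A|U^*$ (since $(U|A|U^*)^2 = U|A|(U^*U)|A|U^* = U|A|^2U^* = AA^*$ and $U|A|U^*\geq 0$). Because $f$ and $g$ are continuous, the operators $f(|A|)$ and $g(|A|)$ commute and the functional calculus gives $|A| = f(|A|)g(|A|) = g(|A|)f(|A|)$. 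Hence for all $x,y\in\mathcal{H}$,
\begin{eqnarray*}
\langle Ax,y\rangle = \langle U g(|A|) f(|A|) x, y\rangle = \langle f(|A|)x,\, g(|A|)U^*y\rangle ,
\end{eqnarray*}
using that $g(|A|)$ is self-adjoint. Applying Cauchy--Schwarz to the right-hand side yields
\begin{eqnarray*}
|\langle Ax,y\rangle|^2 \leq \|f(|A|)x\|^2\, \|g(|A|)U^*y\|^2 = \langle f^2(|A|)x,x\rangle\, \langle U g^2(|A|) U^* y, y\rangle .
\end{eqnarray*}

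It remains to show $U g^2(|A|) U^* \leq g^2(|A^*|)$, which finishes the proof. For every polynomial $p$ with $p(0)=0$ one has $p(U|A|U^*) = U p(|A|) U^*$, because $U^*U|A| = |A|$; uniformly approximating on the (compact) spectrum of $|A|$, the identity $h(|A^*|) = h(U|A|U^*) = U h(|A|) U^*$ holds for every continuous $h$ on $[0,\infty)$ with $h(0)=0$. Now take $h(t) = g^2(t) - g^2(0)$, so that $h(0)=0$ and $g^2 = h + g^2(0)$; then
\begin{eqnarray*}
U g^2(|A|) U^* &=& U h(|A|) U^* + g^2(0)\, UU^* = h(|A^*|) + g^2(0)\, UU^* \\
&=& g^2(|A^*|) - g^2(0)\,(I - UU^*) \,\,\leq\,\, g^2(|A^*|),
\end{eqnarray*}
since $g^2(0)\geq 0$ and $I - UU^* \geq 0$. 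Substituting this into the Cauchy--Schwarz bound above gives $|\langle Ax,y\rangle|^2 \leq \langle f^2(|A|)x,x\rangle\, \langle g^2(|A^*|)y,y\rangle$, as desired.

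The only subtle point is the passage from $|A|$ to $|A^*|$ under conjugation by the partial isometry $U$: in contrast to the unitary case, $U(\cdot)U^*$ need not intertwine the full continuous functional calculus, so one must isolate the value of $g^2$ at $0$ (which may be nonzero even though $fg$ vanishes there). Everything else is routine. In the model case $f(t)=t^{\alpha}$, $g(t)=t^{1-\alpha}$ with $0<\alpha<1$ one has $g(0)=0$, the correction term $g^2(0)(I-UU^*)$ vanishes, and $U g^2(|A|)U^* = g^2(|A^*|)$ exactly.
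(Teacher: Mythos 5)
Your proof is correct. The paper itself does not prove this lemma --- it is quoted from Kittaneh's 1988 paper --- and your argument is essentially the classical one behind that citation: write $A=U|A|$, factor $|A|=f(|A|)g(|A|)$ by the functional calculus, and apply the ordinary Cauchy--Schwarz inequality to $\langle f(|A|)x,\, g(|A|)U^*y\rangle$. The one point where care is genuinely required is the passage from $U g^2(|A|)U^*$ to $g^2(|A^*|)$, since $g(0)$ need not vanish (e.g.\ $f(t)=t$, $g\equiv 1$ is admissible), and you handle it correctly by splitting off $g^2(0)$ and using $U^*U|A|=|A|$ together with $g^2(0)(I-UU^*)\geq 0$; this yields the inequality $Ug^2(|A|)U^*\leq g^2(|A^*|)$, which is all that is needed.
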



\begin{lemma}\label{lemcarthy}\cite{Simon} (\textit{McCarthy inequality})
    If $A\in \mathcal{B}(\mathcal{H})$ is positive, then
    $$\langle Ax,x\rangle^p\leq \langle A^px,x\rangle,\quad \textit{for all $x\in \mathcal{H}$ with $\|x\|=1$ and for all $p\geq 1.$} $$

\end{lemma}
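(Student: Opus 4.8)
The plan is to reduce this operator inequality to a scalar Jensen inequality by means of the spectral theorem. Since $A\in\mathcal{B}(\mathcal{H})$ is positive, its spectrum $\sigma(A)$ is a compact subset of $[0,\infty)$, and the spectral theorem provides a projection-valued measure $E$ on $\sigma(A)$ with $A=\int_{\sigma(A)}\lambda\,dE(\lambda)$; via the continuous functional calculus (legitimate because $t\mapsto t^p$ is continuous on the compact set $\sigma(A)$) one also has $A^p=\int_{\sigma(A)}\lambda^p\,dE(\lambda)$ for every $p\geq 0$.

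First I would fix a unit vector $x\in\mathcal{H}$ and introduce the scalar set function $\mu(S)=\langle E(S)x,x\rangle$ for Borel $S\subseteq\sigma(A)$. Because each $E(S)$ is an orthogonal projection, $\mu(S)=\|E(S)x\|^2\geq 0$, so $\mu$ is a positive Borel measure; and since $E(\sigma(A))=I$ together with $\|x\|=1$ gives $\mu(\sigma(A))=\langle x,x\rangle=1$, the measure $\mu$ is in fact a probability measure. This is the sole place where the hypothesis $\|x\|=1$ enters, and it is indispensable.

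Next I would pair the two spectral representations with $x$ to rewrite both sides of the claimed inequality as integrals against $\mu$, namely $\langle Ax,x\rangle=\int_{\sigma(A)}\lambda\,d\mu(\lambda)$ and $\langle A^px,x\rangle=\int_{\sigma(A)}\lambda^p\,d\mu(\lambda)$. The assertion then becomes
\[
\left(\int_{\sigma(A)}\lambda\,d\mu(\lambda)\right)^{p}\leq\int_{\sigma(A)}\lambda^p\,d\mu(\lambda).
\]
Since $t\mapsto t^p$ is convex on $[0,\infty)$ for $p\geq 1$ and $\mu$ is a probability measure, this is precisely Jensen's inequality applied to the identity random variable $\lambda\mapsto\lambda$, which finishes the argument.

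I do not expect any serious obstacle: once the spectral calculus is in place the proof is essentially bookkeeping. The only points demanding care are the verification that $\mu$ is a probability measure (using $\|x\|=1$) and the identity $\langle A^px,x\rangle=\int\lambda^p\,d\mu$, which follows from $A^p=\int\lambda^p\,dE(\lambda)$ and the boundedness of $A$ (so that $\sigma(A)$ is compact and every integrand is bounded). As an aside, in finite dimensions one can bypass spectral integrals entirely: writing $A=U^*\,\mathrm{diag}(\lambda_1,\dots,\lambda_n)\,U$ with $U$ unitary and expanding $x$ in the eigenbasis, the weights $|(Ux)_j|^2$ form a discrete probability vector summing to $\|x\|^2=1$, and the same convexity of $t\mapsto t^p$ yields the claim directly.
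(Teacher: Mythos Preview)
Your argument is correct: reducing to the scalar measure $\mu(S)=\langle E(S)x,x\rangle$ and applying Jensen's inequality for the convex map $t\mapsto t^p$ on $[0,\infty)$ is exactly the standard route, and every step you describe is sound. Note, however, that the paper does not supply a proof of this lemma at all---it is quoted from Simon's book as a known preliminary---so there is no in-paper argument to compare against; your proof is the expected one and would serve perfectly well.
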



\begin{lemma}\cite{Buzano}(\textit{Buzano's inequality}) \label{buz}
    If $x,y,e \in \mathcal{H}$ with $\|e\|=1,$ then
    $$ |\langle x,e\rangle \langle e,y\rangle|\leq \frac12 (\|x\| \|y\|+ |\langle x,y\rangle|).$$
\end{lemma}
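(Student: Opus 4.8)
The plan is to prove Buzano's inequality by recasting the scalar $\langle x,e\rangle\langle e,y\rangle$ as a bilinear form of the rank-one orthogonal projection $P$ onto $\mathbb{C}e$, and then exploiting the elementary fact that $2P-I$ is a self-adjoint unitary, which is exactly what produces the factor $\tfrac12$ in the bound.

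In detail, I would proceed as follows. The inequality is homogeneous in $x$ and in $y$ and is trivial when $x=0$ or $y=0$, so assume $x,y\neq 0$. Since $\|e\|=1$, the operator $Pz=\langle z,e\rangle e$ is the orthogonal projection onto $\mathbb{C}e$, and one has the identity $\langle Px,y\rangle=\langle x,e\rangle\langle e,y\rangle$, so it suffices to bound $|\langle Px,y\rangle|$. Next set $U:=2P-I$; from $P^{*}=P=P^{2}$ a direct computation gives $U^{*}=U$ and $U^{2}=4P^{2}-4P+I=I$, so $U$ is a self-adjoint unitary and in particular $\|Uz\|=\|z\|$ for every $z\in\mathcal{H}$. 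Writing $P=\tfrac12(I+U)$, sesquilinearity gives $\langle Px,y\rangle=\tfrac12\langle x,y\rangle+\tfrac12\langle Ux,y\rangle$, and the triangle inequality followed by Cauchy--Schwarz and the isometry property of $U$ yields
\[
|\langle x,e\rangle\langle e,y\rangle|=|\langle Px,y\rangle|\leq \tfrac12|\langle x,y\rangle|+\tfrac12\|Ux\|\,\|y\|=\tfrac12\bigl(\|x\|\,\|y\|+|\langle x,y\rangle|\bigr),
\]
which is the claim.

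There is no genuinely hard step here; the one point worth highlighting is that the constant $\tfrac12$ comes precisely from the splitting $P=\tfrac12(I+U)$ with $U$ unitary, since $P$ alone (using only $0\le P\le I$) would give the weaker estimate $|\langle Px,y\rangle|\le\|x\|\,\|y\|$. An alternative, purely elementary route avoiding operators: normalize $\|x\|=\|y\|=1$, decompose $x=\langle x,e\rangle e+x'$ and $y=\langle y,e\rangle e+y'$ with $x',y'\perp e$, note $\langle x,e\rangle\langle e,y\rangle=\langle x,y\rangle-\langle x',y'\rangle$, bound $|\langle x',y'\rangle|\le\|x'\|\,\|y'\|=\sqrt{(1-a^{2})(1-b^{2})}$ with $a=|\langle x,e\rangle|$ and $b=|\langle y,e\rangle|$, and conclude with the elementary inequality $\sqrt{(1-a^{2})(1-b^{2})}\le 1-ab$ (equivalently $(a-b)^{2}\ge 0$), which gives $ab\le\tfrac12(1+|\langle x,y\rangle|)$. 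I would record the projection argument, being the shorter of the two.
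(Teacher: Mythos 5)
Your argument is correct, and it is worth noting that the paper itself offers no proof of this lemma at all: it is simply quoted with a citation to Buzano's original article, so there is nothing in the text to compare against and your write-up is a genuine self-contained addition. The projection route is sound: with $Pz=\langle z,e\rangle e$ one indeed has $\langle Px,y\rangle=\langle x,e\rangle\langle e,y\rangle$, the operator $U=2P-I$ is a self-adjoint involution, and the splitting $P=\tfrac12(I+U)$ together with Cauchy--Schwarz and $\|Ux\|=\|x\|$ delivers exactly the factor $\tfrac12$; your remark that $0\le P\le I$ alone would only give the trivial bound $\|x\|\,\|y\|$ correctly identifies where the gain comes from. The elementary alternative is also complete once one adds the small step you leave implicit, namely that from $\langle x,e\rangle\langle e,y\rangle=\langle x,y\rangle-\langle x',y'\rangle$ one gets $ab\le|\langle x,y\rangle|+\sqrt{(1-a^{2})(1-b^{2})}\le|\langle x,y\rangle|+1-ab$ and then rearranges; since $\sqrt{(1-a^{2})(1-b^{2})}\le 1-ab$ follows from $a^{2}+b^{2}\ge 2ab$ with $1-ab\ge 0$, this is fine. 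Either version would serve as a proof of Lemma \ref{buz}; the projection argument is the cleaner one to record.
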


We can now prove the following theorem.

\begin{theorem}\label{thp100}
    Let $A\in \mathcal{K}(\mathcal{H})$ and let $f$, $g$ be as in Lemma \ref{lemkitt}. For any orthonormal set $\{x_n\} \subset \mathcal{H},$ 
    \begin{eqnarray*}
        \sum_{n}|\langle Ax_n,x_n\rangle|^{2r} &\leq& \frac12 \| f(|A|)\|_{2p}^{2r} \| g(|A^*|)\|_{2q}^{2r} + \frac12 \| f^2(|A|) g^2(|A^*|)\|_r^r,
    \end{eqnarray*} 
where  $\frac1r=\frac1p+\frac1q$ and $r\geq 1, p\geq 2, q\geq 2.$
In particular, for $p=q,$ 
\begin{eqnarray}\label{0p0p0p1}
        \sum_{n}|\langle Ax_n,x_n\rangle|^{p} &\leq& \frac12 \| f(|A|)\|_{2p}^{p} \| g(|A^*|)\|_{2p}^{p} + \frac12 \| f^2(|A|) g^2(|A^*|)\|_{p/2}^{p/2}.
    \end{eqnarray} 

\end{theorem}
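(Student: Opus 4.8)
The plan is to combine the Mixed Schwarz inequality (Lemma \ref{lemkitt}) with Buzano's inequality (Lemma \ref{buz}) to squeeze a $\frac12(\cdots)+\frac12(\cdots)$ structure out of $|\langle Ax_n,x_n\rangle|^2$, then apply the McCarthy inequality (Lemma \ref{lemcarthy}) and H\"older's inequality to sum over $n$, in close analogy with the proof of Theorem \ref{th100p}. Concretely, fix a unit vector $x$. By Lemma \ref{lemkitt}, $|\langle Ax,x\rangle|^2 \leq \langle f^2(|A|)x,x\rangle\langle g^2(|A^*|)x,x\rangle$. Writing $u=f(|A|)x$ and $v=g(|A^*|)x$, the right side is $\|u\|^2\|v\|^2$; but I want to reintroduce the inner product $\langle u,v\rangle = \langle g(|A^*|)f(|A|)x,x\rangle$. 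The device is to apply Buzano's inequality with $e=x$: $|\langle u,x\rangle\langle x,v\rangle|\leq \frac12(\|u\|\|v\|+|\langle u,v\rangle|)$. This does not directly bound $\|u\|\|v\|$, so instead I would bound $\|u\|^2\|v\|^2$ by a different route — namely apply Buzano to the vectors $f^2(|A|)x$ and $g^2(|A^*|)x$ with $e=x$:
\begin{eqnarray*}
\langle f^2(|A|)x,x\rangle\langle x,g^2(|A^*|)x\rangle \leq \tfrac12\left(\|f^2(|A|)x\|\,\|g^2(|A^*|)x\| + |\langle f^2(|A|)x, g^2(|A^*|)x\rangle|\right).
\end{eqnarray*}
The first term on the right is $\langle f^4(|A|)x,x\rangle^{1/2}\langle g^4(|A^*|)x,x\rangle^{1/2}$ and the second is $|\langle g^2(|A^*|)f^2(|A|)x,x\rangle|$. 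Hmm — this produces fourth powers of $f,g$, not the second powers appearing in the statement, so I would instead use $f(|A|)$ and $g(|A^*|)$ themselves in place of their squares, giving $|\langle Ax,x\rangle|^2 \le \langle f^2(|A|)x,x\rangle\langle g^2(|A^*|)x,x\rangle$ and then Buzano on $f^2(|A|)x$ against $g^2(|A^*|)x$... Let me reconsider: the cleanest match to the stated right-hand side $\frac12\|f(|A|)\|_{2p}^{2r}\|g(|A^*|)\|_{2q}^{2r} + \frac12\|f^2(|A|)g^2(|A^*|)\|_r^r$ suggests that after Mixed Schwarz I should apply Buzano directly to $f^2(|A|)x$ and $g^2(|A^*|)x$ with unit vector $e = x$, obtaining $\langle f^2(|A|)x,x\rangle\langle g^2(|A^*|)x,x\rangle \le \frac12\big(\langle f^2(|A|)x,x\rangle \text{-independent terms}\big)$ — actually the correct reading is that $\langle f^2(|A|)x,x\rangle = \langle f^2(|A|)x,x\rangle$ is one factor and we treat $\langle g^2(|A^*|)x,x\rangle = \langle x, g^2(|A^*|)x\rangle$ as the other, then Buzano with $e=x$, $y_1 = f^2(|A|)x$, $y_2 = g^2(|A^*|)x$ gives precisely $|\langle y_1,e\rangle\langle e,y_2\rangle| \le \frac12(\|y_1\|\|y_2\| + |\langle y_1,y_2\rangle|)$. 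Since $y_1,y_2$ are values of positive operators applied to $x$, the left side equals $\langle f^2(|A|)x,x\rangle\langle g^2(|A^*|)x,x\rangle$, and I get
\begin{eqnarray*}
|\langle Ax,x\rangle|^2 \le \tfrac12\|f^2(|A|)x\|\,\|g^2(|A^*|)x\| + \tfrac12\big|\langle f^2(|A|)g^2(|A^*|)x,x\rangle\big|.
\end{eqnarray*}

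Now raise both sides to the power $r$ (using $(a+b)^r \le $ hmm, not directly — but $r\ge 1$ so I should instead keep the inequality at the level of $|\langle Ax,x\rangle|^{2r}$ by first noting $|\langle Ax,x\rangle|^{2} \le$ RHS and then applying $t\mapsto t^r$ monotonicity together with $(\frac{a+b}{2})^r \le \frac{a^r+b^r}{2}$ — wait, that is the wrong direction for convex functions). The correct move: apply the power mean / convexity inequality $\left(\frac{a+b}{2}\right)^r \le \frac{a^r+b^r}{2}$ is FALSE for $r \ge 1$; rather $\frac{a^r+b^r}{2} \ge \left(\frac{a+b}{2}\right)^r$ is true, which is the direction I want since I'm upper-bounding: from $|\langle Ax,x\rangle|^2 \le \frac12 P + \frac12 Q$ I get $|\langle Ax,x\rangle|^{2r} \le \left(\frac{P+Q}{2}\right)^r \le \frac{P^r + Q^r}{2}$. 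So $|\langle Ax_n,x_n\rangle|^{2r} \le \frac12 \langle f^2(|A|)x_n,x_n\rangle^{r}... $ — no: $P = \|f^2(|A|)x_n\|\cdot\|g^2(|A^*|)x_n\| = \langle f^4(|A|)x_n,x_n\rangle^{1/2}\langle g^4(|A^*|)x_n,x_n\rangle^{1/2}$. Then $P^r = \langle f^4(|A|)x_n,x_n\rangle^{r/2}\langle g^4(|A^*|)x_n,x_n\rangle^{r/2}$, and by McCarthy (Lemma \ref{lemcarthy}) with exponents $p/2, q/2$ (valid since $p,q\ge 2$): $\langle f^4(|A|)x_n,x_n\rangle^{r/2} \le \langle f^4(|A|)x_n,x_n\rangle^{r/2}$ and summing via H\"older with exponents $p/r, q/r$ exactly as in Theorem \ref{th100p} yields $\sum_n P^r \le \|f(|A|)\|_{2p}^{2r}\|g(|A^*|)\|_{2q}^{2r}$ after one more McCarthy step to pass from $\langle f^4 x,x\rangle^{p/2}$ to $\langle f^{2p}x,x\rangle$ hmm — I'd need to track this: $\sum_n \langle f^4(|A|)x_n,x_n\rangle^{p/2} \le \sum_n \langle (f^4(|A|))^{p/2} x_n, x_n\rangle = \sum_n\langle f^{2p}(|A|)x_n,x_n\rangle \le \mathrm{tr}\, f^{2p}(|A|) = \|f(|A|)\|_{2p}^{2p}$. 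Good. Similarly $\sum_n Q^r \le \mathrm{tr}\,|f^2(|A|)g^2(|A^*|)|^r = \|f^2(|A|)g^2(|A^*|)\|_r^r$ using $\sum_n |\langle Tx_n,x_n\rangle|^r \le \|T\|_r^r$ (a consequence of the Schatten-norm characterization noted at the start of the section together with McCarthy/triangle arguments).

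Assembling: $\sum_n |\langle Ax_n,x_n\rangle|^{2r} \le \frac12\sum_n P^r + \frac12\sum_n Q^r \le \frac12\|f(|A|)\|_{2p}^{2r}\|g(|A^*|)\|_{2q}^{2r} + \frac12\|f^2(|A|)g^2(|A^*|)\|_r^r$, which is exactly the claim; setting $p=q$ (so $r=p/2$... wait $\frac1r = \frac2p$ gives $r = p/2$, and then $2r = p$) gives $\sum_n|\langle Ax_n,x_n\rangle|^p \le \frac12\|f(|A|)\|_{2p}^p\|g(|A^*|)\|_{2p}^p + \frac12\|f^2(|A|)g^2(|A^*|)\|_{p/2}^{p/2}$, recovering \eqref{0p0p0p1}. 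The main obstacle I anticipate is the bookkeeping in the two chained applications of H\"older and McCarthy: I must verify that the exponent constraints ($p\ge2$, $q\ge2$, $r\ge1$, $\frac1r=\frac1p+\frac1q$) are exactly what is needed for McCarthy (which requires the exponent to be $\ge 1$, i.e. $p/2\ge1$ and $q/2\ge1$) and for H\"older (which requires $\frac{r}{p}+\frac{r}{q}=1$), and that the inequality $\sum_n|\langle Tx_n,x_n\rangle|^r \le \|T\|_r^r$ for the "off-diagonal" term $T = f^2(|A|)g^2(|A^*|)$ (which is not self-adjoint) is legitimately available — I would justify it by writing $|\langle Tx_n,x_n\rangle|^r$, applying the polar-type estimate $|\langle Tx_n,x_n\rangle| \le \langle |T|x_n,x_n\rangle^{1/2}\langle|T^*|x_n,x_n\rangle^{1/2}$ (Mixed Schwarz again, or Cauchy–Schwarz on $T = |T^*|^{1/2}U|T|^{1/2}$-style), then McCarthy and H\"older, landing on $\frac12\mathrm{tr}|T|^r + \frac12\mathrm{tr}|T^*|^r = \|T\|_r^r$ by \eqref{4}-type unitary invariance. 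A secondary subtlety is ensuring all traces are finite, which follows from $A\in\mathcal K(\mathcal H)$ together with the hypothesis that we only assert the inequality when the right-hand side Schatten norms are finite (the convention fixed in the introduction). I expect the Buzano step and the convexity step $(\frac{a+b}{2})^r \le \frac{a^r+b^r}{2}$ to be the genuinely load-bearing ideas, and everything else to be a careful but routine transcription of the argument in Theorem \ref{th100p}.
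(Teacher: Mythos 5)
Your argument is correct and follows essentially the same route as the paper's proof: Mixed Schwarz, then Buzano's inequality with $e=x_n$ applied to $f^2(|A|)x_n$ and $g^2(|A^*|)x_n$, then the convexity bound $\left(\frac{P+Q}{2}\right)^r \le \frac{P^r+Q^r}{2}$, followed by McCarthy and H\"older for the first term and the Schatten-norm characterization $\sum_n|\langle Tx_n,x_n\rangle|^r\le\|T\|_r^r$ for the second. (One slip in the write-up: you assert that $\left(\frac{a+b}{2}\right)^r\le\frac{a^r+b^r}{2}$ is ``FALSE'' for $r\ge 1$, but this is exactly the convexity of $t\mapsto t^r$ that you then correctly use and that the paper also invokes.)
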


\begin{proof}
     From Lemma \ref{lemkitt} and Lemma \ref{buz}, we have
     \begin{eqnarray*}
         |\langle Ax_n,x_n\rangle|^2&\leq& \langle f^2(|A|)x_n,x_n\rangle \langle g^2(|A^*|)x_n,x_n\rangle\\
         &\leq& \frac12 \left( \|f^2(|A|)x_n\| \|g^2(|A^*|)x_n\|+ |\langle f^2(|A|)g^2(|A^*|)x_n,x_n \rangle| \right).
     \end{eqnarray*}
     Using the convexity of $t^r$, $r\geq 1$, it follows that
      \begin{eqnarray*}
         |\langle Ax_n,x_n\rangle|^{2r} 
         &\leq& 
          \frac12 \left( \|f^2(|A|)x_n\|^r \|g^2(|A^*|)x_n\|^r+ |\langle f^2(|A|)g^2(|A^*|)x_n,x_n \rangle|^r \right)\\
         &=&  \frac12 \left( f^4(|A|)x_n,x_n\rangle^{r/2} \langle g^4(|A^*|)x_n,x_n\rangle^{r/2}+ |\langle f^2(|A|)g^2(|A^*|)x_n,x_n \rangle|^r \right).
     \end{eqnarray*}
Summing over $n$, we get
     \begin{eqnarray*}
         && \sum_n|\langle Ax_n,x_n\rangle|^{2r}\\ 
         & \leq& \frac12 \left( \sum_n f^4(|A|)x_n,x_n\rangle^{r/2} \langle g^4(|A^*|)x_n,x_n\rangle^{r/2}+ \sum_n|\langle f^2(|A|)g^2(|A^*|)x_n,x_n \rangle|^r \right).
     \end{eqnarray*}
Using H\"{o}lder's inequality, we have
     \begin{eqnarray*}
       &&  \sum_n f^4(|A|)x_n,x_n\rangle^{r/2} \langle g^4(|A^*|)x_n,x_n\rangle^{r/2}\\
         &\leq & \left(\sum_n f^4(|A|)x_n,x_n\rangle^{p/2} \right)^{r/p} \left( \sum_n \langle g^4(|A^*|)x_n,x_n\rangle^{q/2}\right)^{r/q}\\
         &\leq&  \left(\sum_n f^{2p}(|A|)x_n,x_n\rangle^{} \right)^{r/p} \left( \sum_n \langle g^{2q}(|A^*|)x_n,x_n\rangle^{}\right)^{r/q} \,\, (\textit{by Lemma \ref{lemcarthy}})\\
         &\leq& \left( \textit{trace}\, f^{2p}(|A|) \right)^{r/p} \left( \textit{trace}\, g^{2q}(|A^*|) \right)^{r/q} \\
         &=& \| f(|A|)\|_{2p}^{2r} \| g(|A^*|)\|_{2q}^{2r}
        \end{eqnarray*}
        and 
      \begin{eqnarray*}
          \sum_n|\langle f^2(|A|)g^2(|A^*|)x_n,x_n \rangle|^r &\leq& \|f^2(|A|)g^2(|A^*|)\|_r^r.
      \end{eqnarray*}
Therefore,
\begin{eqnarray*}
         \sum_n|\langle Ax_n,x_n\rangle|^{2r} 
         &\leq& 
           \frac12 \left( \| f(|A|)\|_{2p}^{2r} \| g(|A^*|)\|_{2q}^{2r}+  \|f^2(|A|)g^2(|A^*|)\|_r^r \right).
     \end{eqnarray*}
This is the first inequality. The second inequality follows from the first inequality by considering $p=q$ (and then $r=p/2$).
      
\end{proof}

If we consider $f(t)=t^{\alpha}$ and $g(t)=t^{1-\alpha}$, $\alpha \in (0,1)$ in Theorem \ref{thp100}, then we get the following results.

\begin{cor}\label{corppp}
    Let $A\in \mathcal{K}(\mathcal{H})$. For any orthonormal set $\{x_n\} \subset \mathcal{H},$ 
\begin{eqnarray*}
        \sum_{n}|\langle Ax_n,x_n\rangle|^{2r} &\leq& \frac12 \| A\|_{2\alpha p}^{2r \alpha } \, \| A\|_{2(1-\alpha)q}^{2r (1-\alpha)} + \frac12 \| |A|^{2\alpha} |A^*|^{2(1-\alpha)}\|_{r}^{r},
    \end{eqnarray*}

where $\frac1r=\frac1p+\frac1q$ and $r\geq 1, p\geq 2, q\geq 2$ and $\alpha \in (0,1)$.
For $p=q,$ 
    \begin{eqnarray*}
        \sum_{n}|\langle Ax_n,x_n\rangle|^{p} &\leq& \frac12 \| A\|_{2\alpha p}^{\alpha p} \, \| A\|_{2(1-\alpha)p}^{(1-\alpha)p} + \frac12 \| |A|^{2\alpha} |A^*|^{2(1-\alpha)}\|_{p/2}^{p/2}.
    \end{eqnarray*} 


    \end{cor}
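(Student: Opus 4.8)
The plan is to apply Theorem \ref{thp100} directly with the concrete choice of functions $f(t) = t^{\alpha}$ and $g(t) = t^{1-\alpha}$ for a fixed $\alpha \in (0,1)$. First I would verify that this pair is admissible for Lemma \ref{lemkitt}: both $f$ and $g$ are non-negative continuous functions on $[0,\infty)$, and $f(t)g(t) = t^{\alpha} \cdot t^{1-\alpha} = t$ for all $t \geq 0$, so the hypotheses of Theorem \ref{thp100} are met. Then I would simply substitute into the conclusion of that theorem, tracking how each term transforms.

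The substitution proceeds term by term. For the first term on the right, $f(|A|) = |A|^{\alpha}$, and by the definition of the Schatten norm $\| |A|^{\alpha} \|_{2p} = \big( \mathrm{trace}\, |A|^{2p\alpha}\big)^{1/(2p)} = \big( \mathrm{trace}\, (|A^*A|)^{p\alpha}\big)^{1/(2p)}$; since $\mathrm{trace}\,|A|^{2p\alpha} = \sum_j s_j(A)^{2p\alpha} = \|A\|_{2\alpha p}^{2\alpha p}$, one obtains $\| f(|A|)\|_{2p} = \|A\|_{2\alpha p}^{\alpha}$, hence $\| f(|A|)\|_{2p}^{2r} = \|A\|_{2\alpha p}^{2r\alpha}$. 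Similarly, using $s_j(A^*) = s_j(A)$, we get $\| g(|A^*|)\|_{2q} = \|A\|_{2(1-\alpha)q}^{1-\alpha}$, so $\| g(|A^*|)\|_{2q}^{2r} = \|A\|_{2(1-\alpha)q}^{2r(1-\alpha)}$. For the second term, $f^2(|A|) = |A|^{2\alpha}$ and $g^2(|A^*|) = |A^*|^{2(1-\alpha)}$, so $\| f^2(|A|)g^2(|A^*|)\|_r^r = \| |A|^{2\alpha}|A^*|^{2(1-\alpha)}\|_r^r$. Plugging these three identities into the displayed inequality of Theorem \ref{thp100} yields exactly the first claimed bound, valid under the same constraints $\frac1r = \frac1p + \frac1q$, $r \geq 1$, $p \geq 2$, $q \geq 2$ (and now $\alpha \in (0,1)$). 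The special case $p = q$ (so $r = p/2$) then follows verbatim from \eqref{0p0p0p1} after the same substitutions, giving the second displayed inequality.

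There is essentially no obstacle here — the corollary is a pure specialization, and the only mild care needed is the bookkeeping of exponents when rewriting $\| |A|^{\alpha}\|_{2p}$ in terms of $\|A\|_{2\alpha p}$, together with the observation that $|A^*|$ and $|A|$ have the same singular values, so that the $g$-term is correctly expressed through $\|A\|_{2(1-\alpha)q}$. I would also note in passing that the constraints $p \geq 2$, $q \geq 2$ are inherited from the use of the McCarthy inequality (Lemma \ref{lemcarthy}) inside the proof of Theorem \ref{thp100}, where exponents $p/2$ and $q/2$ must be at least $1$; no new constraint is introduced by the choice of $f$ and $g$ beyond $\alpha \in (0,1)$, which ensures $f$ and $g$ are genuine (non-constant, well-defined) power functions.
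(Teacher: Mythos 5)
Your proposal is correct and is exactly the paper's route: the paper obtains Corollary \ref{corppp} by the same direct specialization $f(t)=t^{\alpha}$, $g(t)=t^{1-\alpha}$ in Theorem \ref{thp100}, and your exponent bookkeeping $\|\,|A|^{\alpha}\|_{2p}=\|A\|_{2\alpha p}^{\alpha}$ together with $s_j(A^*)=s_j(A)$ is precisely what is needed.
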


In particular, for $\alpha=\frac12$ in Corollary \ref{corppp}, we get the following results.

\begin{cor}
     Let $A\in \mathcal{K}(\mathcal{H})$. For any orthonormal set $\{x_n\} \subset \mathcal{H},$ 
     \begin{eqnarray*}
        \sum_{n}|\langle Ax_n,x_n\rangle|^{2r} &\leq& \frac12 \| A\|_{ p}^{ r} \| A\|_{q}^{r} + \frac12 \| A^2\|_{r}^{r},
    \end{eqnarray*} 

    where $\frac1r=\frac1p+\frac1q$ and $r\geq 1, p\geq 2, q\geq 2.$
For $p=q,$
    
    \begin{eqnarray}\label{p-norm}
        \sum_{n}|\langle Ax_n,x_n\rangle|^{p} &\leq& \frac12 \| A\|_{ p}^{ p}  + \frac12 \| A^2\|_{p/2}^{p/2}.
    \end{eqnarray} 
\end{cor}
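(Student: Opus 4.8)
The plan is simply to specialize Corollary~\ref{corppp} to the value $\alpha = \frac12$. First I would substitute $\alpha = \frac12$ into the general inequality of Corollary~\ref{corppp},
\[
\sum_{n}|\langle Ax_n,x_n\rangle|^{2r} \leq \frac12 \| A\|_{2\alpha p}^{2r \alpha } \, \| A\|_{2(1-\alpha)q}^{2r (1-\alpha)} + \frac12 \big\| \, |A|^{2\alpha} |A^*|^{2(1-\alpha)} \, \big\|_{r}^{r},
\]
and observe that all the exponents collapse: $2\alpha p = p$, $2r\alpha = r$, $2(1-\alpha)q = q$, $2r(1-\alpha) = r$, and $|A|^{2\alpha}|A^*|^{2(1-\alpha)} = |A|\,|A^*|$. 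Thus the right-hand side becomes $\frac12 \|A\|_p^r \|A\|_q^r + \frac12 \big\| \, |A|\,|A^*| \, \big\|_r^r$.

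The only step that is not purely formal is rewriting $\big\| \, |A|\,|A^*| \, \big\|_r$ as $\|A^2\|_r$. For this I would invoke the unitarily invariant norm identity \eqref{4}, namely $\vertiii{\,|A|\,|B|\,} = \vertiii{AB^*}$, with the choice $B = A^*$: then $|B| = (B^*B)^{1/2} = (AA^*)^{1/2} = |A^*|$ and $B^* = A$, so $\vertiii{\,|A|\,|A^*|\,} = \vertiii{A\cdot A} = \vertiii{A^2}$. Taking $\vertiii{\cdot} = \|\cdot\|_r$ (legitimate since $r\geq 1$) gives $\big\| \, |A|\,|A^*| \, \big\|_r^r = \|A^2\|_r^r$, which yields the first displayed inequality.

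Finally, the labelled inequality \eqref{p-norm} follows by setting $p = q$ in the one just proved: the constraint $\frac1r = \frac1p + \frac1q$ then forces $r = p/2$, whence $2r = p$ on the left and $\|A\|_p^r \|A\|_q^r = \|A\|_p^{p/2}\|A\|_p^{p/2} = \|A\|_p^{p}$ together with $\|A^2\|_r^r = \|A^2\|_{p/2}^{p/2}$ on the right. I do not anticipate any genuine obstacle here; beyond the application of \eqref{4} one only has to note that the hypotheses $r\geq 1$, $p\geq 2$, $q\geq 2$ of Corollary~\ref{corppp} are inherited in the specialization — indeed $p = q \geq 2$ forces $r = p/2 \geq 1$.
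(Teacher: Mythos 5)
Your proposal is correct and follows exactly the paper's route: the paper obtains this corollary by setting $\alpha=\tfrac12$ in Corollary \ref{corppp} (and then $p=q$), which is precisely what you do. The only step the paper leaves implicit is the identification $\| \, |A|\,|A^*| \, \|_r = \|A^2\|_r$, which you correctly justify via \eqref{4} with $B=A^*$.
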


\begin{remark}
    For any orthonormal set $\{x_n\} \subset \mathcal{H},$  from the inequality \eqref{p-norm}, we get 
    \begin{eqnarray}\label{p-norm11}
        \left( \sum_{n}|\langle Ax_n,x_n\rangle|^{p}\right)^{1/p} &\leq& \frac1{2^{1/p}} \left( \| A\|_{ p}^{ }  + \sqrt{ \| A^2\|_{p/2}^{}}\right),\quad \textit{ for all $p\geq 2.$}
    \end{eqnarray}

\end{remark}





From the inequality \eqref{p-norm}, we obtain the following result in terms of the eigenvalues.


\begin{cor}\label{cor..}
    Let $A\in \mathcal{K}(\mathcal{H}).$ If $\{ \lambda_n(A) \}$ is a listing of all non-zero eigenvalues (with multiplicity) of $A$, then
 \begin{eqnarray}\label{p-norm01}
        \sum_{n} \left|\lambda_n(A)\right|^{p} &\leq& \frac12 \| A\|_{ p}^{ p}  + \frac12 \| A^2\|_{p/2}^{p/2},\quad \textit{ for all $p\geq 2.$}
    \end{eqnarray}


\end{cor}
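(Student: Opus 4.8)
The plan is to derive Corollary \ref{cor..} from the inequality \eqref{p-norm} by choosing an appropriate orthonormal set $\{x_n\}$, namely one adapted to the eigenvectors of $A$ associated with the nonzero eigenvalues. Since $A$ is compact, Schur's triangularization theorem (or, more precisely, its infinite-dimensional analogue for compact operators, sometimes attributed to Ringrose) guarantees the existence of an orthonormal sequence $\{x_n\}$ — a finite set when $A$ has finitely many nonzero eigenvalues, or an orthonormal basis of the closed span of the generalized eigenspaces otherwise — with respect to which $A$ is \emph{upper triangular}, in the sense that $\langle A x_n, x_n \rangle = \lambda_n(A)$ and $\langle A x_m, x_n \rangle = 0$ for $m > n$. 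The diagonal entries of this triangular form are exactly the nonzero eigenvalues of $A$, each repeated according to (algebraic) multiplicity.

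With such a triangularizing orthonormal set in hand, the first step is simply to observe that
\begin{eqnarray*}
\sum_n |\lambda_n(A)|^p = \sum_n |\langle A x_n, x_n\rangle|^p.
\end{eqnarray*}
The second step is to invoke \eqref{p-norm}, which applies to \emph{any} orthonormal set, to bound the right-hand side by $\tfrac12 \|A\|_p^p + \tfrac12 \|A^2\|_{p/2}^{p/2}$. This is valid for all $p \geq 2$, exactly the hypothesis of the corollary, and it requires $A \in \mathcal{C}_p(\mathcal{H})$ for the right side to be finite; if $A \notin \mathcal{C}_p(\mathcal{H})$ the inequality holds trivially since the right side is $+\infty$ (and one can check the left side is finite by Weyl's inequality, so there is nothing to prove, or one simply states the result under the standing convention that $\|A\|_p$ means $A \in \mathcal{C}_p(\mathcal{H})$). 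Chaining these two steps gives \eqref{p-norm01} immediately.

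The main obstacle — really the only nontrivial point — is justifying the existence of the triangularizing orthonormal sequence in the compact (possibly infinite-rank) setting, and confirming that the diagonal of the triangular form lists precisely the nonzero eigenvalues with their correct multiplicities. In finite dimensions this is textbook Schur decomposition; for compact operators one uses the fact that the nonzero spectrum is a discrete sequence of eigenvalues of finite algebraic multiplicity (Riesz–Schauder theory), and builds the orthonormal set by processing the generalized eigenspaces in an order of nonincreasing modulus, applying Gram–Schmidt within each. I would cite this as a standard fact (e.g. from Simon's book \cite{Simon} or Gohberg–Krein) rather than prove it, since it is exactly the same triangularization that underlies the classical proof of Weyl's inequality \eqref{wel}. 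Once that is granted, the corollary follows in two lines, and one may add the remark that \eqref{p-norm01} refines \eqref{wel} because $\|A^2\|_{p/2}^{p/2} \leq \|A\|_p^p$ for $p \geq 2$ (a consequence of $s_j(A^2) \leq s_j(A)^2$ together with monotonicity, or of the already-established inequality $\|A^2\|_p \leq 2^{(1-p)/p}\|A^*A + AA^*\|_p \leq \|A\|_{2p}^2$ applied with $2p$ in place of $p$).
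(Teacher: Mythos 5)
Your proposal is correct and follows essentially the same route as the paper: the paper also takes a Schur ``basis'' $\{x_n\}$ with $\lambda_n(A)=\langle Ax_n,x_n\rangle$ (citing equation (5) of Simon's 1977 paper rather than proving the triangularization) and then applies the already-established inequality \eqref{p-norm}, which holds for an arbitrary orthonormal set. Your additional remarks on the comparison with Weyl's inequality via $\|A^2\|_{p/2}^{p/2}\leq\|A\|_p^p$ reproduce the content of Proposition \ref{0prop} and the remark following it.
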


\begin{proof}
Following  \cite[equation (5)]{B.Simon}, there exists an orthonormal set (known as Schur “basis”) $\{x_n\}\subset \mathcal{H}$ such that $ \lambda_n(A)= \langle Ax_n,x_n\rangle.$
Therefore, the desired inequality \eqref{p-norm01} follows from \eqref{p-norm}.
\end{proof}

We now prove the following proposition.

\begin{prop}\label{0prop}
    If $A\in \mathcal{K}(\mathcal{H}),$ then
    \begin{eqnarray}\label{0p0p0}
         \| A^2\|_{p/2}^{p/2} \leq \|A\|_p^p, \quad \textit{for all $p\geq 2.$}
    \end{eqnarray}
    
\end{prop}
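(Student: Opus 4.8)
The plan is to stay within the framework of this section and prove \eqref{0p0p0} from the variational formula for the Schatten norm recalled at its start, combined with the Cauchy--Schwarz inequality and McCarthy's inequality (Lemma~\ref{lemcarthy}). The hypothesis $p\ge2$ will be invoked exactly twice, each time only to guarantee that some exponent is at least $1$.

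Since $p/2\ge1$, applying the variational formula to $A^2$ gives
\[
\|A^2\|_{p/2}^{p/2}=\sup\sum_k\bigl|\langle A^2x_k,y_k\rangle\bigr|^{p/2},
\]
the supremum being over all orthonormal sets $\{x_k\},\{y_k\}\subset\mathcal H$. Fix two such sets. Writing $\langle A^2x_k,y_k\rangle=\langle Ax_k,A^*y_k\rangle$ and using the Cauchy--Schwarz inequality,
\[
\bigl|\langle A^2x_k,y_k\rangle\bigr|\le\|Ax_k\|\,\|A^*y_k\|=\langle|A|^2x_k,x_k\rangle^{1/2}\,\langle|A^*|^2y_k,y_k\rangle^{1/2}.
\]
Raising to the power $p/2$, summing over $k$, and applying the Cauchy--Schwarz inequality for sums yields
\[
\sum_k\bigl|\langle A^2x_k,y_k\rangle\bigr|^{p/2}\le\Bigl(\sum_k\langle|A|^2x_k,x_k\rangle^{p/2}\Bigr)^{1/2}\Bigl(\sum_k\langle|A^*|^2y_k,y_k\rangle^{p/2}\Bigr)^{1/2}.
\]

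Now, since $|A|^2$ and $|A^*|^2$ are positive and $p/2\ge1$, McCarthy's inequality gives $\langle|A|^2x_k,x_k\rangle^{p/2}\le\langle|A|^px_k,x_k\rangle$, and likewise for $|A^*|$. Summing over the orthonormal sets and using $\sum_k\langle|A|^px_k,x_k\rangle\le\textit{trace}\,|A|^p=\|A\|_p^p$ together with $\|A^*\|_p=\|A\|_p$ shows that both factors on the right are bounded by $\|A\|_p^p$. Hence $\sum_k\bigl|\langle A^2x_k,y_k\rangle\bigr|^{p/2}\le\|A\|_p^p$ for every pair of orthonormal sets, and taking the supremum gives \eqref{0p0p0}.

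There is no genuine obstacle here: every step is routine, and the only point that needs attention is that both the variational formula for $\|\cdot\|_{p/2}$ and McCarthy's inequality require exponent $p/2\ge1$, which is exactly why the statement is restricted to $p\ge2$. (One could alternatively deduce $\|A^2\|_{p/2}\le\|A\|_p^2$ in a single line from H\"older's inequality for Schatten norms; the argument above has the advantage of using only the machinery of this section and of making transparent where the hypothesis enters. Equality in \eqref{0p0p0} holds, for instance, whenever $A$ is normal.)
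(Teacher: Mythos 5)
Your proof is correct and follows essentially the same route as the paper's: the variational formula for $\|\cdot\|_{p/2}$, Cauchy--Schwarz applied to $\langle A^2x_k,y_k\rangle=\langle Ax_k,A^*y_k\rangle$, McCarthy's inequality to pass from $\langle|A|^2x_k,x_k\rangle^{p/2}$ to $\langle|A|^px_k,x_k\rangle$, and a trace bound. The only (immaterial) difference is that you split the cross-term sum with the Cauchy--Schwarz inequality for sums, whereas the paper uses the pointwise AM--GM inequality; both yield the same bound $\|A\|_p^p$.
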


\begin{proof}
    Suppose $\{x_n\}$ and $\{y_n\}$ are any two orthonormal sets in $\mathcal{H}.$
    Then, we have
    \begin{eqnarray*}
        \|A^2\|_{p/2}^{p/2} &=& \sup \sum_n |\langle A^2x_n,y_n\rangle|^{p/2}\\
        &&\,\, (\textit{supremum is taken over all orthonormal sets $\{x_n\}$ and $\{y_n\}$ })\\
        &\leq&  \sup \sum_n \| Ax_n\|^{p/2}\|A^*y_n\|^{p/2} \,\,(\textit{by Cauchy-Schwarz inequality})\\
        &=& \sup \sum_n \langle |A|^2x_n,x_n\rangle|^{p/4} \langle |A^*|^2y_n,y_n\rangle|^{p/4}\\
        &\leq& \sup \sum_n \langle |A|^px_n,x_n\rangle|^{1/2} \langle |A^*|^py_n,y_n\rangle|^{1/2} \,\, (\textit{by Lemma \ref{lemcarthy}})\\
        &\leq& \sup \sum_n \frac12( \langle |A|^px_n,x_n\rangle+ \langle |A^*|^py_n,y_n\rangle ) \,\,(\textit{by AM-GM inequality})\\
        &\leq&  \frac12\left( \sup \sum_n \langle |A|^px_n,x_n\rangle|+ \sup \sum_n \langle |A^*|^py_n,y_n\rangle| \right)\\
        &\leq& \frac12 (\textit{trace}\, |A|^p+ \textit{trace}\, |A^*|^p)  \\
        &=& \|A\|_p^p,
    \end{eqnarray*}
as desired.
\end{proof}

\begin{remark}
  Weyl's inequality \cite{Weyl} (see  \cite[Th. 2.3]{B.Simon} for another proof) says that 
   \begin{eqnarray}\label{improve020}
        \sum_{n}\left|\lambda_n(A)\right|^{p} &\leq&  \| A\|_{ p}^{ p}, \quad \textit{for all $p\geq 1$.}
    \end{eqnarray}
    Following  the inequality \eqref{0p0p0}, we have 
    \begin{eqnarray*}
       \frac12 \| A\|_{ p}^{ p}  + \frac12 \| A^2\|_{p/2}^{p/2} &\leq &\|A\|_p^p,  \quad \textit{for all $p\geq 2$}.
    \end{eqnarray*}
    Therefore, the inequality \eqref{p-norm01} refines  Weyl's inequality \eqref{improve020} for all $p\geq 2$.

\end{remark}

 In the following, we obtain another generalization of Weyl's inequality \eqref{improve020}.

\begin{theorem}\label{th9999}
    Let $A\in \mathcal{K}(\mathcal{H})$ and let $f$, $g$ be as in Lemma \ref{lemkitt}. 
    If $\{ \lambda_n(A) \}$ is a listing of all non-zero eigenvalues (with multiplicity) of $A$, then
 \begin{eqnarray}\label{p-norm019}
        \sum_{n} \left|\lambda_n(A)\right|^{p} &\leq& \frac12 \| f^{2}(|A|) \|_p^p + \frac12 \|g^{2}(|A^*|)\|_p^p, \quad \textit{ for all $p\geq 1.$ }
    \end{eqnarray}
Note that Weyl's inequality is reduced in the special case  $f(t)=g(t)=t^{1/2}.$
    
\end{theorem}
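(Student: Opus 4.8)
The plan is to reduce everything to the mixed Schwarz inequality (Lemma \ref{lemkitt}) evaluated on a Schur ``basis'', and then to convert the resulting scalar estimate into a trace inequality using the elementary AM--GM bound together with McCarthy's inequality (Lemma \ref{lemcarthy}). Throughout I may assume the right-hand side of \eqref{p-norm019} is finite, since otherwise the inequality holds trivially.

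First I would, exactly as in the proof of Corollary \ref{cor..}, invoke \cite[equation (5)]{B.Simon} to fix an orthonormal set $\{x_n\}\subset\mathcal{H}$ (a Schur ``basis'') with $\lambda_n(A)=\langle Ax_n,x_n\rangle$ for every $n$. Applying Lemma \ref{lemkitt} with the prescribed $f,g$ then gives $|\lambda_n(A)|^2\le \langle f^2(|A|)x_n,x_n\rangle\,\langle g^2(|A^*|)x_n,x_n\rangle$. Raising both sides to the power $p/2$ --- legitimate since both factors are nonnegative and $t\mapsto t^{p/2}$ is increasing --- and then splitting the product with $ab\le\frac12(a^2+b^2)$ for $a,b\ge0$, I obtain the pointwise bound $|\lambda_n(A)|^p\le\frac12\langle f^2(|A|)x_n,x_n\rangle^{p}+\frac12\langle g^2(|A^*|)x_n,x_n\rangle^{p}$.

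Next, because $f^2(|A|)$ and $g^2(|A^*|)$ are positive operators and each $x_n$ is a unit vector, McCarthy's inequality (Lemma \ref{lemcarthy}) upgrades each term for $p\ge1$ to $\langle f^2(|A|)x_n,x_n\rangle^{p}\le\langle f^{2p}(|A|)x_n,x_n\rangle$ and likewise $\langle g^2(|A^*|)x_n,x_n\rangle^{p}\le\langle g^{2p}(|A^*|)x_n,x_n\rangle$. Summing over $n$ and using that $\sum_n\langle Bx_n,x_n\rangle\le\textit{trace}\,B$ for any positive $B$ and any orthonormal set $\{x_n\}$ (extend to an orthonormal basis; all summands are nonnegative), I arrive at $\sum_n|\lambda_n(A)|^p\le\frac12\,\textit{trace}\,f^{2p}(|A|)+\frac12\,\textit{trace}\,g^{2p}(|A^*|)=\frac12\|f^2(|A|)\|_p^p+\frac12\|g^2(|A^*|)\|_p^p$, which is \eqref{p-norm019}; taking $f(t)=g(t)=t^{1/2}$ collapses the right-hand side to $\|A\|_p^p$ and recovers Weyl's inequality \eqref{improve020}. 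I do not expect a genuine obstacle here --- this is the same scheme as in Corollary \ref{cor..} and Proposition \ref{0prop}, only with Buzano's refinement replaced by the cruder $ab\le\frac12(a^2+b^2)$, which is precisely what opens up the whole range $p\ge1$ rather than $p\ge2$. The single delicate point is that McCarthy's inequality needs $p\ge1$, exactly the stated hypothesis; for $0<p<1$ the argument would fail at that step.
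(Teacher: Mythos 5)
Your proposal is correct and follows essentially the same route as the paper's proof: mixed Schwarz on a Schur basis, the AM--GM split $ab\le\frac12(a^2+b^2)$, McCarthy's inequality for $p\ge 1$, and the trace bound for positive operators over an orthonormal set. The only cosmetic difference is that the paper first proves the estimate for an arbitrary orthonormal set and specializes to the Schur basis at the end, whereas you fix the Schur basis from the start; this changes nothing.
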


\begin{proof}
Let $\{x_n\} \subset \mathcal{H}$ be any orthonormal set. Then from Lemma \ref{lemkitt}, we have
\begin{eqnarray*}
    \sum_n|\langle Ax_n,x_n\rangle|^p &\leq& \sum_n\langle f^2(|A|)x_n,x_n\rangle^{p/2}   \langle g^2(|A^*|)x_n,x_n\rangle^{p/2}\\  
    &\leq& \frac12 \sum_n (\langle f^2(|A|)x_n,x_n\rangle^{p} +  \langle g^2(|A^*|)x_n,x_n\rangle^{p})\\
    &\leq& \frac12 \sum_n (\langle f^{2p}(|A|)x_n,x_n\rangle^{} +  \langle g^{2p}(|A^*|)x_n,x_n\rangle^{})\\
    &\leq& \frac12 \textit{trace}\,(f^{2p}(|A|)+g^{2p}(|A^*|))\\
    &=& \frac12 \| f^{2p}(|A|)+g^{2p}(|A^*|)\|_1\\
    &=& \frac 12 \| f^{2}(|A|) \|_p^p + \frac 12\|g^{2}(|A^*|)\|_p^p.
\end{eqnarray*}
Therefore, the inequality 
$\sum_n|\langle Ax_n,x_n\rangle|^p
    \leq \frac12 \| f^{2}(|A|) \|_p^p + \frac12 \|g^{2}(|A^*|)\|_p^p$
is true for every orthonormal set $\{x_n\} \subset \mathcal{H}.$ The desired inequality \eqref{p-norm019} follows by taking the  Schur ``basis''   $\{x_n\} \subset \mathcal{H}$.  
\end{proof}


For $A\in \mathcal{K}(\mathcal{H}),$  Simon \cite[Th. 2.3]{B.Simon} provided the following inequality: for any two orthonormal sets $\{x_n\}$ and $\{y_n\}$ in $\mathcal{H}$,
$$\sum_n|\langle Ax_n,y_n\rangle|^p\leq \|A\|_p^p, \quad \textit{for all} \,\, p\geq 1.$$

We now provide a similar type of inequality for the finite rank operators, which follows from the proof of Theorem \ref{th100p}.

\begin{theorem}\label{thhhhhp}
    Let $A\in \mathcal{B}(\mathcal{H})$ with finite rank. Then for any two orthonormal sets $\{x_n\}$ and $\{y_n\}$ in $\mathcal{H}$, we have
    \begin{eqnarray}
        \sum_n|\langle Ax_n,y_n\rangle|^{2r} &\leq& (\textit{rank}\, A )^{r/q}\|A\|_{2p}^{2r},
    \end{eqnarray}
        
where $1\leq p,q<\infty$ and $\frac1r=\frac1p+\frac1q.$ 
In particular, for $p=q,$

\begin{eqnarray}\label{pi0}
    \sum_n|\langle Ax_n,y_n\rangle|^{p} &\leq& (\textit{rank}\, A )^{1/2}\|A\|_{2p}^{p},\quad \textit{for all $p\geq 1.$
}
\end{eqnarray}
\end{theorem}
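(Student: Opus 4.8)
The plan is to mimic the proof of Theorem \ref{th100p} almost verbatim, replacing the supremum-based characterization of $\|A\|_{2r}$ with a direct estimate that holds for \emph{each} fixed pair of orthonormal sets. First I would recall the key pointwise inequality established in the proof of Theorem \ref{th100p}: since $A$ has closed range (being finite rank) and $A = AA^{\dagger}A = P_{\mathcal{R}(A)}A$, for all $x,y\in\mathcal{H}$ one has $|\langle Ax,y\rangle| \le \|Ax\|\,\|P_{\mathcal{R}(A)}y\|$, hence
\begin{eqnarray*}
|\langle Ax,y\rangle|^2 \le \langle |A|^2 x,x\rangle\,\langle P_{\mathcal{R}(A)}y,y\rangle.
\end{eqnarray*}

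Next, given orthonormal sets $\{x_n\}$ and $\{y_n\}$, I would raise this to the power $r$, sum over $n$, and apply H\"older's inequality with exponents $p/r$ and $q/r$ (using $\tfrac1r=\tfrac1p+\tfrac1q$) to split the sum, then invoke the McCarthy-type convexity step $\langle|A|^2x_n,x_n\rangle^p\le\langle|A|^{2p}x_n,x_n\rangle$ and the analogous $\langle P_{\mathcal{R}(A)}y_n,y_n\rangle^q\le\langle P_{\mathcal{R}(A)}y_n,y_n\rangle$ (trivial since $P_{\mathcal{R}(A)}$ is a projection, so the exponent does nothing beyond the Bessel bound). Bounding $\sum_n\langle|A|^{2p}x_n,x_n\rangle\le\operatorname{trace}|A|^{2p}=\|A\|_{2p}^{2p}$ and $\sum_n\langle P_{\mathcal{R}(A)}y_n,y_n\rangle\le\operatorname{trace}P_{\mathcal{R}(A)}=\operatorname{rank}A$ then yields
\begin{eqnarray*}
\sum_n|\langle Ax_n,y_n\rangle|^{2r}\le (\operatorname{rank}A)^{r/q}\,\|A\|_{2p}^{2r},
\end{eqnarray*}
which is exactly the claimed bound; setting $p=q$ (so $r=p/2$) gives the special case \eqref{pi0}.

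The only genuine subtlety — and the step I would flag as the main (minor) obstacle — is the bookkeeping on the exponents in the H\"older/McCarthy chain: one must be careful that after raising to power $r$ and applying H\"older, the inner exponent on $\langle|A|^2x_n,x_n\rangle$ becomes $p$ (not $p/2$), so that McCarthy's inequality (Lemma \ref{lemcarthy}) applies in the form $\langle|A|^2x_n,x_n\rangle^p\le\langle(|A|^2)^p x_n,x_n\rangle$, valid precisely because $p\ge1$. Everything else is a routine repetition of the computation displayed in Theorem \ref{th100p}, simply stopping one line earlier (before taking the supremum over orthonormal sets). I would therefore write the proof as: ``Following the proof of Theorem \ref{th100p} up to the displayed estimate $\sum_k|\langle Ax_k,y_k\rangle|^{2r}\le\|A\|_{2p}^{2r}(\operatorname{rank}A)^{r/q}$, which holds for \emph{every} pair of orthonormal sets, we obtain the claim; the particular case $p=q$ gives \eqref{pi0}.'' No new ideas beyond those already in the section are needed.
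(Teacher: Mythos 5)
Your proposal is correct and coincides with the paper's own argument: the paper explicitly states that Theorem \ref{thhhhhp} ``follows from the proof of Theorem \ref{th100p},'' i.e.\ it is exactly the displayed chain of estimates there, read off for a fixed pair of orthonormal sets before the supremum is taken. Your bookkeeping of the exponents (H\"older with $p/r$ and $q/r$, then McCarthy with exponent $p\geq 1$, then the trace bounds $\operatorname{trace}|A|^{2p}=\|A\|_{2p}^{2p}$ and $\operatorname{trace}P_{\mathcal{R}(A)}=\operatorname{rank}A$) matches the paper's computation line for line.
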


By considering $x_n=y_n$ in $\eqref{pi0}$ and  $\{x_n\}\subset \mathcal{H}$ as a Schur ``basis'', we get:

\begin{cor}\label{cor99990}
     Let $A\in \mathcal{B}(\mathcal{H})$ with finite rank. If $\{ \lambda_n(A) \}$ is a listing of all non-zero eigenvalues (with multiplicity) of $A$, then
 \begin{eqnarray}\label{p-norm01000}
        \sum_{n} \left|\lambda_n(A)\right|^{p} &\leq& (\textit{rank}\, A )^{1/2}\|A\|_{2p}^{p}, \quad \textit{for all $p\geq 1.$}
\end{eqnarray}

\end{cor}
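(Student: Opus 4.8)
The plan is to deduce this directly from the particular case \eqref{pi0} of Theorem \ref{thhhhhp}, exactly as suggested by the sentence preceding the statement. First I would recall the key device used earlier in this section: following Simon \cite[equation (5)]{B.Simon}, for a compact (in particular finite-rank) operator $A$ there is an orthonormal set $\{x_n\}\subset\mathcal{H}$, the so-called Schur ``basis'', for which $\lambda_n(A)=\langle Ax_n,x_n\rangle$ for every $n$, where $\{\lambda_n(A)\}$ enumerates the non-zero eigenvalues of $A$ with multiplicity. This is the same reduction already employed in the proof of Corollary \ref{cor..} and in the last line of the proof of Theorem \ref{th9999}, so it is available to us.

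With this in hand the argument is immediate. In inequality \eqref{pi0} of Theorem \ref{thhhhhp}, which reads
\begin{eqnarray*}
    \sum_n|\langle Ax_n,y_n\rangle|^{p} &\leq& (\textit{rank}\, A)^{1/2}\|A\|_{2p}^{p},\quad \textit{for all } p\geq 1,
\end{eqnarray*}
valid for any two orthonormal sets $\{x_n\},\{y_n\}\subset\mathcal{H}$, I would specialize $y_n=x_n$ and then take $\{x_n\}$ to be the Schur ``basis'' for $A$. Since $\langle Ax_n,x_n\rangle=\lambda_n(A)$ for this particular choice, the left-hand side becomes $\sum_n|\lambda_n(A)|^p$, and we obtain
\begin{eqnarray*}
    \sum_n|\lambda_n(A)|^{p} &\leq& (\textit{rank}\, A)^{1/2}\|A\|_{2p}^{p},\quad \textit{for all } p\geq 1,
\end{eqnarray*}
which is precisely \eqref{p-norm01000}.

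There is essentially no obstacle here: the entire content of the corollary is packaged in Theorem \ref{thhhhhp} (itself proved by the Moore–Penrose/Hölder argument of Theorem \ref{th100p}) together with the existence of the Schur ``basis'', and both are already in place. The only point worth a word of care is that one must make sure the Schur ``basis'' is indeed an orthonormal set so that \eqref{pi0} applies with $y_n=x_n$; this is guaranteed by the construction in \cite{B.Simon}, and the same subtlety was handled without difficulty in Corollary \ref{cor..}. Thus the proof is a one-line specialization.
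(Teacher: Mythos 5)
Your proposal is correct and coincides with the paper's own derivation: the corollary is obtained exactly by setting $y_n=x_n$ in \eqref{pi0} and choosing $\{x_n\}$ to be the Schur ``basis'' from \cite{B.Simon}, so that $\langle Ax_n,x_n\rangle=\lambda_n(A)$. Nothing further is needed.
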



For $A\in \mathcal{K}(\mathcal{H})$, it is well known that the function $p\to \|A\|_p$ is monotone decreasing on $p\geq 1$ (i.e., $\|A\|_q\leq \|A\|_p$ for $1\leq p <q)$.
We now bound $\|A\|_p$ from above for $A\in \mathcal{M}_n(\mathbb{C})$, from which we obtain a monotone increasing function involving the Schatten $p$-norm. 

 \begin{theorem}\label{thmatrix}
   Let $1\leq p<q <\infty.$  If $A\in \mathcal{M}_n(\mathbb{C}),$ then $\|A\|_p \leq n^{\frac{q-p}{pq}} \|A\|_q .$ In particular, the function $p\to n^{-{1}/{p}}\|A\|_p$ is non-decreasing in $p\in [1,\infty).$

\end{theorem}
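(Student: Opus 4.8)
The plan is to reduce the statement to H\"older's inequality applied to the $n$-tuple of singular values of $A$. Since $A\in\mathcal{M}_n(\mathbb{C})$ has at most $n$ singular values, I would list them as $s_1(A)\geq s_2(A)\geq\cdots\geq s_n(A)\geq 0$, padding the list with zeros if $\textit{rank}\,A<n$, so that $\|A\|_r^r=\sum_{j=1}^n s_j(A)^r$ for every $r>0$; this harmless zero-padding is precisely what produces the dimensional factor.

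Fix $1\leq p<q<\infty$. First I would apply H\"older's inequality to the product $\sum_{j=1}^n s_j(A)^p\cdot 1$ with the conjugate exponents $q/p$ and $q/(q-p)$ (note $q/p>1$ and $p/q+(q-p)/q=1$), obtaining $\sum_{j=1}^n s_j(A)^p \leq \left(\sum_{j=1}^n s_j(A)^q\right)^{p/q}\left(\sum_{j=1}^n 1\right)^{(q-p)/q}=\|A\|_q^p\, n^{(q-p)/q}$. Taking $p$-th roots gives $\|A\|_p\leq n^{(q-p)/(pq)}\|A\|_q$, which is the first assertion. Equivalently, $n^{-1/p}\|A\|_p=\left(\frac1n\sum_{j=1}^n s_j(A)^p\right)^{1/p}$ is the power mean of order $p$ of the $n$ nonnegative numbers $s_1(A),\dots,s_n(A)$, and the claimed inequality is exactly the monotonicity of power means in the order.

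For the ``in particular'' part I would simply multiply the inequality just proved by $n^{-1/p}$: $n^{-1/p}\|A\|_p\leq n^{-1/p}\,n^{(q-p)/(pq)}\|A\|_q=n^{-1/q}\|A\|_q$, since $-\frac1p+\frac{q-p}{pq}=-\frac1q$. Hence $p\mapsto n^{-1/p}\|A\|_p$ is non-decreasing on $[1,\infty)$.

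There is no substantive obstacle here; the only care needed is the exponent bookkeeping in H\"older (verifying the conjugacy relation and that $q/p>1$) and the zero-padding of the singular value list so the sum genuinely has $n$ terms. As an alternative one could bootstrap from Theorem \ref{th100p}, iterating $\|A\|_p\leq(\textit{rank}\,A)^{1/2p}\|A\|_{2p}\leq n^{1/2p}\|A\|_{2p}$, but that only yields dyadic comparisons, whereas the direct H\"older argument handles all real $p<q$ simultaneously and is cleaner.
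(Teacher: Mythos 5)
Your proof is correct and is essentially the paper's argument: both hinge on H\"older's inequality with the conjugate exponents $q/p$ and $q/(q-p)$ applied against the constant sequence $1$, which is what produces the factor $n^{(q-p)/q}$. The only cosmetic difference is that the paper runs this computation on the sums $\sum_{k=1}^n |\langle Ax_k,y_k\rangle|^p$ over orthonormal sets and then takes a supremum (consistent with the variational characterization used throughout Section~\ref{sec4}), whereas you apply it directly to the zero-padded list of singular values --- your power-mean reformulation and the explicit exponent check for the ``in particular'' clause are both accurate.
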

\begin{proof}
    Let $p'=\frac qp$ and $q'=\frac{q}{q-p}.$ Then clearly $p'>1$ and $q'>0$ with $\frac{1}{p'}+\frac{1}{q'}=1.$  Let $\{x_1,x_2,\ldots,x_n\}$ and $\{y_1,y_2,\ldots,y_n\}$ be any two orthonormal sets in $\mathbb{C}^n.$ Then
    from H\"older's inequality, we obtain
\begin{eqnarray*}
         \sum_{k=1}^n |\langle Ax_n,y_n\rangle|^{p} &\leq& \left( \sum_{k=1}^n |\langle Ax_n,y_n\rangle|^{pp'} \right)^{1/{p'}} n^{1/{q'}}
         =\left( \sum_{k=1}^n |\langle Ax_n,y_n\rangle|^{q} \right)^{p/{q}} n^{\frac{q-p}{q}},
\end{eqnarray*}
which implies the desired inequality. 
\end{proof}


\section{Operator norm and numerical radius inequalities}\label{sec5}

In this section, we obtain the operator norm and numerical radius inequalities of bounded linear operators, which generalize and improve the existing inequalities \eqref{2002jot}, \eqref{k03} and \eqref{pintu22}. In order to prove our results first we need the following lemma.

\begin{lemma} \cite{Hou1995} \label{lem-Hou}
	Let $A,B,X,Y\in \mathcal{B}(\mathcal{H}).$ Then
	$$r\left( \begin{bmatrix}
	A&X\\
	B&Y
	\end{bmatrix}\right) \leq r\left( \begin{bmatrix}
	\|A\| &\|X\| \\
	\| B\|& \|Y\|
	\end{bmatrix}\right) .$$
\end{lemma}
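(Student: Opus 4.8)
The plan is to reduce the statement to a nonnegative-matrix comparison and then invoke the Perron--Frobenius machinery. First I would replace the operator $2\times 2$ matrix $T = \begin{bmatrix} A & X \\ B & Y \end{bmatrix}$ acting on $\mathcal H \oplus \mathcal H$ by comparing it, entrywise in norm, with the $2 \times 2$ scalar matrix $S = \begin{bmatrix} \|A\| & \|X\| \\ \|B\| & \|Y\| \end{bmatrix}$. The spectral radius of $T$ can be computed via the formula $r(T) = \lim_{n\to\infty} \|T^n\|^{1/n}$, so the key is to control $\|T^n\|$ in terms of the powers of $S$.

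The main step is the following observation: if $T^n = \begin{bmatrix} P_n & Q_n \\ R_n & S_n \end{bmatrix}$ (block entries in $\mathcal B(\mathcal H)$), then expanding the product shows that the matrix of norms $\begin{bmatrix} \|P_n\| & \|Q_n\| \\ \|R_n\| & \|S_n\| \end{bmatrix}$ is dominated entrywise by $S^n$, because the triangle inequality and submultiplicativity of the operator norm give $\|(\text{entry of } T^n)\| \leq (\text{corresponding entry of } S^n)$. Indeed, each block entry of $T^n$ is a sum of products of the blocks $A, B, X, Y$, and each such product has norm bounded by the product of the scalar norms, which is exactly the combinatorics encoded in $S^n$. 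Then, using $\|T\| \leq \|P\| + \|Q\| + \|R\| + \|S\|$ (or any fixed equivalent norm on $2\times 2$ block matrices) together with the fact that for the nonnegative matrix $S$ one has $r(S) = \lim_n \|S^n\|^{1/n}$ in any norm, I would conclude
\begin{eqnarray*}
r(T) = \lim_{n\to\infty} \|T^n\|^{1/n} \leq \lim_{n\to\infty} \left( \text{(sum of entries of } S^n) \right)^{1/n} = r(S).
\end{eqnarray*}

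The one technical point to handle carefully is the passage from "each block entry of $T^n$ is norm-dominated by the corresponding entry of $S^n$" to a clean bound on $\|T^n\|$ itself; this is routine since all norms on the finite-dimensional space of $2\times 2$ arrangements are equivalent, and the $n$-th root kills any multiplicative constant. I expect the main obstacle to be purely expository: making the induction on $n$ for the entrywise domination $\bigl[\|(T^n)_{ij}\|\bigr] \leq S^n$ precise, since one must track that the nonnegativity of $S$ ensures the inductive step $S^{n+1} = S^n \cdot S$ preserves the entrywise inequality (a product of two entrywise-dominated nonnegative matrices is entrywise dominated). Since this is quoted as a known lemma from \cite{Hou1995}, I would in practice simply cite it, but the above is the self-contained argument.
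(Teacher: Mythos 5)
The paper does not prove this lemma at all -- it is quoted verbatim from \cite{Hou1995} -- so there is no in-paper argument to compare against. Your self-contained proof is correct. The three ingredients all check out: (i) the entrywise domination $\bigl[\|(T^n)_{ij}\|\bigr]\le S^n$ follows by induction from the triangle inequality and submultiplicativity of the operator norm, the nonnegativity of $S$ being exactly what makes the inductive step $\|(T^{n+1})_{ij}\|\le\sum_k\|(T^n)_{ik}\|\,\|T_{kj}\|\le\sum_k (S^n)_{ik}S_{kj}$ go through; (ii) the crude bound $\|T^n\|\le\sum_{i,j}\|(T^n)_{ij}\|$ holds since a block matrix is the sum of four matrices each carrying a single block, each of norm equal to that block's norm; and (iii) the sum of entries of $S^n$ is $\|S^n\|_{1,1}$ for the submultiplicative matrix norm $\|M\|_{1,1}=\sum_{i,j}|m_{ij}|$, so Gelfand's formula gives $\lim_n\bigl(\sum_{i,j}(S^n)_{ij}\bigr)^{1/n}=r(S)$ with no irreducibility hypothesis needed. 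Chaining these via $r(T)=\lim_n\|T^n\|^{1/n}$ yields the claim; the argument also visibly extends to $k\times k$ block matrices with no change. Since the source is cited, one would normally just cite it, but your argument stands on its own.
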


We can now obtain a generalization of the inequality \eqref{2002jot}.

\begin{theorem}\label{th5}
	Let $X,Y\in \mathcal{B}(\mathcal{H})$ be positive. Then
	\begin{eqnarray}
	\|X+Y\| \leq \max(\|X\|,\|Y\| )+ w\left( \begin{bmatrix}
	0& X^{1-t}Y^{1-\alpha}\\
	Y^{\alpha}X^t & 0
	\end{bmatrix}  \right)
	\end{eqnarray}
	and
	\begin{eqnarray}\label{r11}
	\|X+Y\| \leq \frac{ \|X\|+\|Y\| + \sqrt{ (\|X\|-\|Y\|)^2 +4 \| X^{1-t}Y^{1-\alpha} \| \| X^{t}Y^{\alpha}\| } }{2},
	\end{eqnarray}
	for all $\alpha, t\in [0,1].$ In particular, for $\alpha=t=\frac12,$
	\begin{eqnarray*}\label{}
	\|X+Y\| \leq \frac{ \|X\|+\|Y\| + \sqrt{ (\|X\|-\|Y\|)^2 +4 \| X^{1/2}Y^{1/2} \|^2  } }{2}.
	\end{eqnarray*}
\end{theorem}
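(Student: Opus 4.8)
The plan is to realize $\|X+Y\|$ as the spectral radius of a $2\times 2$ operator matrix and then feed that matrix into Lemma \ref{lem-Hou}. Set
\[
P = \begin{bmatrix} X^t & 0 \\ Y^{1-\alpha} & 0 \end{bmatrix}, \qquad Q = \begin{bmatrix} X^{1-t} & Y^{\alpha} \\ 0 & 0 \end{bmatrix},
\]
where the fractional powers of the positive operators $X,Y$ are defined via the continuous functional calculus. A direct block multiplication gives $QP = (X+Y)\oplus 0$ and
\[
PQ = \begin{bmatrix} X & X^t Y^{\alpha} \\ Y^{1-\alpha}X^{1-t} & Y \end{bmatrix}.
\]
Since $X+Y$ is positive, $\|X+Y\| = r(X+Y) = r(QP) = r(PQ)$, the last equality being the general fact $r(RS)=r(SR)$.

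For the first inequality I would write $PQ = (X\oplus Y) + N$, where $N$ denotes the off-diagonal part of $PQ$, and then combine $r(\cdot)\le w(\cdot)$, subadditivity of the numerical radius, and $w(X\oplus Y)=\max(w(X),w(Y))=\max(\|X\|,\|Y\|)$ (the last step because $X,Y$ are positive). Finally, $w(N)=w(N^{*})$ and $N^{*}$ is precisely the off-diagonal matrix with entries $X^{1-t}Y^{1-\alpha}$ and $Y^{\alpha}X^{t}$ appearing in the statement, which completes the first inequality.

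For the second inequality I would instead apply Lemma \ref{lem-Hou} directly to $PQ$, obtaining
\[
\|X+Y\| = r(PQ) \le r\!\left( \begin{bmatrix} \|X\| & \|X^t Y^\alpha\| \\ \|Y^{1-\alpha}X^{1-t}\| & \|Y\| \end{bmatrix}\right).
\]
The right-hand side is the larger root of the characteristic quadratic $\lambda^2-(\|X\|+\|Y\|)\lambda+(\|X\|\|Y\|-\|X^tY^\alpha\|\|Y^{1-\alpha}X^{1-t}\|)=0$, i.e.
\[
\tfrac12\Big(\|X\|+\|Y\|+\sqrt{(\|X\|-\|Y\|)^2+4\,\|X^t Y^\alpha\|\,\|Y^{1-\alpha}X^{1-t}\|}\,\Big),
\]
and using $\|Y^{1-\alpha}X^{1-t}\|=\|X^{1-t}Y^{1-\alpha}\|$ (norm of the adjoint) this is exactly \eqref{r11}. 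The case $\alpha=t=\tfrac12$ follows at once since then $\|X^tY^\alpha\|\,\|Y^{1-\alpha}X^{1-t}\|=\|X^{1/2}Y^{1/2}\|^2$.

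The only genuine idea here is the choice of factorization $X+Y=QP$: it must be set up so that $PQ$ carries $X$ and $Y$ on its diagonal while the prescribed products $X^tY^\alpha$ and $X^{1-t}Y^{1-\alpha}$ (up to an adjoint) occupy the off-diagonal positions. Once that is in place, the remainder — verifying the two block products, quoting $r(RS)=r(SR)$ and Lemma \ref{lem-Hou}, and solving a quadratic — is routine, so I expect no serious obstacle. The one point worth a word of care is the meaning of $X^0,Y^0$ at the endpoints $t,\alpha\in\{0,1\}$, but since $X^0X=X$ under any of the standard conventions the argument is valid for all $t,\alpha\in[0,1]$.
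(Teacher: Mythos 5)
Your proposal is correct and follows essentially the same route as the paper: factor $(X+Y)\oplus 0$ as a product of two block operators, pass to the reversed product via $r(RS)=r(SR)$, then use numerical-radius subadditivity for the first bound and Lemma \ref{lem-Hou} plus the quadratic formula for the second. The only cosmetic difference is that your $PQ$ is the adjoint of the paper's block matrix, which you correctly reconcile via $w(N)=w(N^*)$ and $\|T\|=\|T^*\|$ (and your use of $r\le w$ rather than the paper's asserted equality is, if anything, the cleaner step).
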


\begin{proof}
	We have 
	\begin{eqnarray}\label{spec}
	  \|X+Y\| \notag 
	&=& \left \|\begin{bmatrix}
	X+Y &0\\
	0&0
	\end{bmatrix}  \right\| \notag\\
	&=& r \left (\begin{bmatrix}
	X+Y &0\\
	0&0
	\end{bmatrix}  \right)\notag\\
	&=& r \left (\begin{bmatrix}
	X^t &Y^{1-\alpha}\\
	0&0
	\end{bmatrix} \begin{bmatrix}
	X^{1-t}&0\\
	Y^{\alpha}&0
	\end{bmatrix} \right)\notag\\
	&=& r \left (\begin{bmatrix}
	X^{1-t}&0\\
	Y^{\alpha}&0
	\end{bmatrix} \begin{bmatrix}
	X^t &Y^{1-\alpha}\\
	0&0
	\end{bmatrix}  \right) \,\, (\text{$r(AB)=r(BA)$  $\forall A,B\in \mathcal{B}(\mathcal{H})$)} \notag\\
	&=& r\left(\begin{bmatrix}
	X& X^{1-t}Y^{1-\alpha}\\
	Y^{\alpha}X^t & Y
	\end{bmatrix}\right)\\
	&=& w\left(\begin{bmatrix}
	X& X^{1-t}Y^{1-\alpha}\\
	Y^{\alpha}X^t & Y
	\end{bmatrix}\right) \notag\\
	&\leq& w\left(\begin{bmatrix}
	X& 0\\
	0 & Y
	\end{bmatrix}\right) + w\left(\begin{bmatrix}
	0& X^{1-t}Y^{1-\alpha}\\
	Y^{\alpha}X^t & 0
	\end{bmatrix}\right) \notag\\
	&=& \max (\|X\|, \|Y\|)+ w\left(\begin{bmatrix}
	0& X^{1-t}Y^{1-\alpha}\\
	Y^{\alpha}X^t & 0
	\end{bmatrix}\right) \notag.
	\end{eqnarray}
	This is the first inequality. Again, from \eqref{spec}, we have
	\begin{eqnarray*}
		\|X+Y\| &\leq& r\left(\begin{bmatrix}
			\|X\| & \|X^{1-t}Y^{1-\alpha} \| \\
			\|Y^{\alpha}X^t\| & \|Y\|
		\end{bmatrix}\right) \,\, (\text{using Lemma \ref{lem-Hou}})\\
		&=& \frac{ \|X\|+\|Y\| + \sqrt{ (\|X\|-\|Y\|)^2 +4 \| X^{1-t}Y^{1-\alpha} \| \| X^tY^{\alpha}\| } }{2},
	\end{eqnarray*}
	This completes the proof.
\end{proof}

Clearly, \eqref{r11} generalizes the existing inequality \eqref{2002jot}.
Now using \eqref{r11} we develop an upper bound for the numerical radius in terms of the operator norm and the spectral radius. To prove this we need the mixed Schwarz inequality (see Lemma \ref{lemkitt})
\begin{eqnarray}\label{mixed}
| \langle Ax,x\rangle|^2 \leq \langle |A|x,x\rangle \langle |A^*|x,x\rangle , \, \text{ where $x\in \mathcal{H}$ with $\|x\|=1.$}
\end{eqnarray}

\begin{theorem}\label{corp11}
	Let $A\in \mathcal{B}(\mathcal{H}).$ Then
	\begin{eqnarray*}
		w(A) \leq \frac12 \|A\|+ \frac12 r^{1/4} \left( |A|^{2t}|A^*|^{2\alpha}\right ) \times r^{1/4}\left ( |A|^{2(1-t)}|A^*|^{2(1-\alpha)}\right),
	\end{eqnarray*}
 
	for all $\alpha,t\in [0,1].$ In particular, for $\alpha=t=\frac12,$
	\begin{eqnarray}\label{pintu}
	w(A) \leq \frac12 \|A\|+ \frac12 {r^{1/2} \left( |A|^{}|A^*|^{}\right )}.
	\end{eqnarray}
	
Also, in particular, for $\alpha=t=0,$
	\begin{eqnarray}\label{kit3}
	w(A) \leq \frac12 \|A\|+ \frac12 {\left\|A^2\right\|^{1/2}}.
	\end{eqnarray}
\end{theorem}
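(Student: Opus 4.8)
The plan is to bound $w(A)$ by the operator norm of the positive operator $|A|+|A^{*}|$, and then to feed that positive operator into the generalized norm inequality \eqref{r11} of Theorem \ref{th5}.

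First I would apply the mixed Schwarz inequality \eqref{mixed}: for every unit vector $x\in\mathcal{H}$ one has $|\langle Ax,x\rangle|^{2}\le\langle|A|x,x\rangle\,\langle|A^{*}|x,x\rangle$. Since both inner products on the right are nonnegative, the AM--GM inequality yields $|\langle Ax,x\rangle|\le\tfrac12\langle(|A|+|A^{*}|)x,x\rangle\le\tfrac12\,\||A|+|A^{*}|\|$, the last step using that the positive operator $|A|+|A^{*}|$ has numerical radius equal to its norm. Taking the supremum over all unit vectors gives
\[ w(A)\le\tfrac12\,\||A|+|A^{*}|\|. \]

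Next I would invoke \eqref{r11} with the positive operators $X=|A|$ and $Y=|A^{*}|$. Because $\|X\|=\|Y\|=\|A\|$, the term $(\|X\|-\|Y\|)^{2}$ vanishes and $\max(\|X\|,\|Y\|)=\|A\|$, so for all $\alpha,t\in[0,1]$ we obtain $\||A|+|A^{*}|\|\le\|A\|+\sqrt{\,\||A|^{1-t}|A^{*}|^{1-\alpha}\|\,\||A|^{t}|A^{*}|^{\alpha}\|\,}$; combining this with the previous display,
\[ w(A)\le\tfrac12\|A\|+\tfrac12\sqrt{\,\||A|^{1-t}|A^{*}|^{1-\alpha}\|\,\||A|^{t}|A^{*}|^{\alpha}\|\,}. \]
It then remains to rewrite each norm as a spectral radius: for $\beta,\gamma\ge0$, using $r(BC)=r(CB)$ together with the fact that the spectral radius of a positive operator equals its norm,
\[ r\!\left(|A|^{2\beta}|A^{*}|^{2\gamma}\right)=r\!\left(|A|^{\beta}|A^{*}|^{2\gamma}|A|^{\beta}\right)=\big\|\,(|A^{*}|^{\gamma}|A|^{\beta})^{*}(|A^{*}|^{\gamma}|A|^{\beta})\,\big\|=\big\|\,|A|^{\beta}|A^{*}|^{\gamma}\,\big\|^{2}, \]
so that $\||A|^{\beta}|A^{*}|^{\gamma}\|=r^{1/2}(|A|^{2\beta}|A^{*}|^{2\gamma})$. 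Substituting $(\beta,\gamma)=(t,\alpha)$ and $(\beta,\gamma)=(1-t,1-\alpha)$ transforms the bound above into exactly the asserted inequality, and the special cases $\alpha=t=\tfrac12$ and $\alpha=t=0$ follow from the same identity, recovering \eqref{pintu} and \eqref{kit3}.

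The step I expect to be most delicate is essentially bookkeeping rather than conceptual: one must check that the exponents $1-t,\,1-\alpha,\,t,\,\alpha$ produced by \eqref{r11} line up correctly with the doubled exponents in the statement, and that the identity $\||A|^{\beta}|A^{*}|^{\gamma}\|=r^{1/2}(|A|^{2\beta}|A^{*}|^{2\gamma})$ is read correctly at the boundary values $\beta,\gamma\in\{0,1\}$ (where $|A|^{0}$ and $|A^{*}|^{0}$ are interpreted as the relevant range projections). Conceptually the argument is just the chain: mixed Schwarz $\Rightarrow$ AM--GM $\Rightarrow$ Theorem \ref{th5}.
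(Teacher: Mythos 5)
Your proposal is correct and follows essentially the same route as the paper: mixed Schwarz plus AM--GM to obtain $w(A)\le\tfrac12\,\||A|+|A^{*}|\|$, then inequality \eqref{r11} with $X=|A|$, $Y=|A^{*}|$, and finally the identity $\||A|^{\beta}|A^{*}|^{\gamma}\|^{2}=r(|A|^{2\beta}|A^{*}|^{2\gamma})$. The only cosmetic difference is at the boundary exponents: the paper implicitly takes $|A|^{0}=I$ in the functional calculus, so \eqref{kit3} drops out directly (and reading $|A|^{0}$ as a range projection would only make the first spectral-radius factor $\le 1$, still yielding \eqref{kit3}).
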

\begin{proof}
	Take $x\in \mathcal{H}$ with $\|x\|=1.$ From \eqref{mixed} and using the AM-GM inequality, we get
	$$|\langle Ax,x\rangle| \leq \frac12 \langle (|A|+|A^*|x,x\rangle) \leq \frac12 \|  |A|+ |A^*| \| .$$
	Therefore, taking supremum over $\|x\|=1,$ we get
	\begin{eqnarray}\label{1}
	w(A) \leq \frac12 \|  |A|+ |A^*| \|.
	\end{eqnarray}
	Considering $X=|A|$ and $Y=|A^*|$ in  \eqref{r11}, we obtain 
	\begin{eqnarray*}
		\||A|+|A^*|\| \leq { \|A\|+ { \| |A|^{1-t}|A^*|^{1-\alpha} \|^{1/2} \| |A|^t|A^*|^{\alpha}\|^{1/2} }  }, \quad \forall \alpha, t\in [0,1].
	\end{eqnarray*}
	Now, $\||A|^{t}|A^*|^{\alpha}\|^2 =  \| |A^*|^{\alpha} |A|^{t}  |A|^{t} |A^*|^{\alpha}  \|
	= r(|A^*|^{\alpha} |A|^{t}  |A|^{t} |A^*|^{\alpha})
	= r( |A|^{2t} |A^*|^{2\alpha} ).$
	Similarly, $
		\| |A|^{1-t}|A^*|^{1-\alpha} \|^2 = r( |A|^{2(1-t)}|A^*|^{2(1-\alpha)} ).
	$
	Hence, for all $\alpha, t\in [0,1],$ we get
	\begin{eqnarray}\label{2}
	\||A|+|A^*|\| \leq { \|A\|+ { r^{1/4}( |A|^{2(1-t)}|A^*|^{2(1-\alpha)} ) \times r^{1/4}( |A|^{2t} |A^*|^{2\alpha} ) } }.
	\end{eqnarray}
	Combining the inequalities \eqref{1} and \eqref{2}, we get
	\begin{eqnarray*}
		w(A) \leq \frac12 \|A\|+ \frac12 {r^{1/4} \left( |A|^{2t}|A^*|^{2\alpha}\right ) \times r^{1/4}\left ( |A|^{2(1-t)}|A^*|^{2(1-\alpha)}\right)},
	\end{eqnarray*}
	for all $\alpha,t\in [0,1].$ The second inequality follows by considering $\alpha=t=\frac12$ and the third inequality follows by considering $\alpha=t=0.$
\end{proof}

\begin{remark}
(i)	It is easy to verify that 
	\begin{eqnarray*}
		\frac12 \|A\|+ \frac12 r^{1/4} \left( |A|^{2t}|A^*|^{2\alpha}\right ) \times r^{1/4}\left ( |A|^{2(1-t)}|A^*|^{2(1-\alpha)}\right) &\leq& \|A\|,
	\end{eqnarray*}
	for all $\alpha,t\in [0,1].$
	Therefore, the numerical radius bound in Theorem \ref{corp11} improves the bound $w(A)\leq \|A\|.$ 
	

(iii) Considering $\alpha =t$ in Theorem \ref{corp11}, we see that
\begin{eqnarray*}
		w(A) &\leq& \frac12 \|A\|+ \frac12 {r^{1/4} \left( |A|^{2t}|A^*|^{2t}\right ) \times r^{1/4}\left ( |A|^{2(1-t)}|A^*|^{2(1-t)}\right)}\\
		&=& \frac12 \|A\|+ \frac12  \left\| |A|^{t}|A^*|^{t}\right \|^{1/2}  \left\| |A|^{1-t}|A^*|^{1-t}\right \|^{1/2}\\
	&\leq& 	 \frac12 \|A\|+ \frac12  \left\| |A|^{}|A^*|^{}\right \|^{t/2}  \left\| |A|^{}|A^*|^{}\right \|^{(1-t)/2} \,\, (\textit{by Heinz inequality})\\
	&=& \frac12 \|A\|+ \frac12 \left\| A^2\right\|^{1/2}, \, \text{for all $t\in [0,1].$}
\end{eqnarray*}
	Therefore, Theorem \ref{corp11} improves as well as generalizes the inequality \eqref{k03}. 

\end{remark}

We next obtain improvements of the triangle inequality for the operator norm.

\begin{theorem}\label{th4}
	Let $A,B\in \mathcal{B}(\mathcal{H})$. Then
	\begin{eqnarray*}
		\|A\pm B\| \leq \sqrt{\| A^*A+B^*B\|+ 2w(A^*B)}
	\end{eqnarray*} 
	and 
	\begin{eqnarray*}
		\|A\pm B\| \leq \sqrt{\| AA^*+BB^*\|+ 2w(AB^*)}
	\end{eqnarray*}
\end{theorem}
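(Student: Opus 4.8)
The plan is to reduce the norm of the sum $A\pm B$ to the operator norm of a $2\times 1$ column, compute its square via the standard trick $\|C\|^2 = \|C^*C\|$, and then recognize the resulting $2\times 2$ block operator as a sum of a diagonal positive part and an off-diagonal part whose norm is a numerical radius. Concretely, for the first inequality I would write
\begin{eqnarray*}
\|A\pm B\|^2
&=& \left\| \begin{bmatrix} A\pm B & 0 \\ 0 & 0\end{bmatrix} \right\|^2
= \left\| \begin{bmatrix} A & \pm B \\ 0 & 0\end{bmatrix} \begin{bmatrix} I & 0 \\ I & 0\end{bmatrix} \right\|^2,
\end{eqnarray*}
but it is cleaner to start from the column $C = \begin{bmatrix} A \\ \pm B\end{bmatrix}$, so that $\|A\pm B\| = \|\,[\,I\ \ I\,]\,C\,\|$ needs care; instead I will use $\|A\pm B\|^2 = \bigl\|\,[\,I\ \pm I\,]\begin{bmatrix}A\\ B\end{bmatrix}\bigr\|^2$ and pass to $\bigl\|\begin{bmatrix}A\\ B\end{bmatrix}[\,I\ \pm I\,]\bigr\|$ having the same norm (since $\|XY\|=\|YX\|$ fails in general but $\|X^*X\|=\|XX^*\|$ does not — so the right move is the one below).

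The robust route: set $T = \begin{bmatrix} A & B \\ 0 & 0 \end{bmatrix}$. Then $T^*T = \begin{bmatrix} A^*A & A^*B \\ B^*A & B^*B\end{bmatrix}$, and since $A\pm B = \begin{bmatrix} I & \pm I\end{bmatrix} T' $ for a suitable reshuffling, one gets $\|A\pm B\|^2 \le \bigl\| \begin{bmatrix} A^*A & A^*B \\ B^*A & B^*B\end{bmatrix} + \begin{bmatrix} A^*A & \pm A^*B \\ \pm B^*A & B^*B\end{bmatrix}\bigr\|/\,2$ — I will instead verify directly that $\|A\pm B\|^2$ equals the norm of $\begin{bmatrix} A^* \\ B^*\end{bmatrix}\begin{bmatrix} A & \pm B\end{bmatrix}$ evaluated appropriately; the upshot I am aiming at is
\begin{eqnarray*}
\|A\pm B\|^2 = \left\| \begin{bmatrix} A^*A & \pm A^*B \\ \pm B^*A & B^*B\end{bmatrix} \right\|
\le \left\| \begin{bmatrix} A^*A & 0 \\ 0 & B^*B\end{bmatrix} \right\| + \left\| \begin{bmatrix} 0 & \pm A^*B \\ \pm B^*A & 0\end{bmatrix} \right\|.
\end{eqnarray*}
The first term on the right is $\|A^*A + B^*B\|$ after using the unitary that swaps the diagonal with the sum — actually $\bigl\|\operatorname{diag}(A^*A,B^*B)\bigr\| = \max(\|A^*A\|,\|B^*B\|)$, so here I must instead keep the block $\begin{bmatrix} A^*A & \pm A^*B \\ \pm B^*A & B^*B\end{bmatrix}$ intact and observe it is unitarily equivalent (via $\tfrac1{\sqrt2}\begin{bmatrix} I & \pm I \\ I & \mp I\end{bmatrix}$) to $\begin{bmatrix} A^*A + B^*B + A^*B + B^*A & 0 \\ 0 & *\end{bmatrix}$ up to signs; since $A^*A+B^*B \pm(A^*B+B^*A) = (A\pm B)^*(A\pm B)\ge 0$, this just recovers the identity $\|A\pm B\|^2 = \|(A\pm B)^*(A\pm B)\|$, so the real content is the triangle-inequality split above together with the identification $\bigl\|\begin{bmatrix} 0 & A^*B \\ B^*A & 0\end{bmatrix}\bigr\| = w\bigl(\begin{bmatrix} 0 & A^*B \\ B^*A & 0\end{bmatrix}\bigr)$ — wait, that matrix is self-adjoint, so its norm is its spectral radius, and one checks its square is $\operatorname{diag}(A^*BB^*A, B^*AA^*B)$, giving norm $\|A^*B\|$, not $w(A^*B)$.

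Given these pitfalls, the cleanest correct plan is: apply Lemma~\ref{lem-Hou} (the Hou bound). Write, as in the proof of Theorem~\ref{th5}, $\|A\pm B\|^2 = r\bigl(\begin{bmatrix} A^*A & A^*B \\ \pm B^*A & \pm B^*B\end{bmatrix}\bigr)$ obtained from $A^*A+(\pm)B^*B+\ldots$ via $r(XY)=r(YX)$ applied to $\begin{bmatrix} A^* \\ \pm B^*\end{bmatrix}\begin{bmatrix} A & B\end{bmatrix}$; then split the $2\times2$ block operator into diagonal plus off-diagonal, bound the spectral radius of the sum by $r$(diagonal)$+w$(off-diagonal) using that $r\le w$ is subadditive on the numerical-radius side, identify $r\bigl(\operatorname{diag}(A^*A,B^*B)\bigr) = \|A^*A+B^*B\|$ after the correct unitary conjugation that produces $A^*A+B^*B$ on one diagonal entry, and finally note $w\bigl(\begin{bmatrix} 0 & A^*B \\ B^*A & 0\end{bmatrix}\bigr) = w(A^*B)$ by the general fact $w\bigl(\begin{bmatrix} 0 & X \\ Y & 0\end{bmatrix}\bigr) = w(XY)$ when $X=Y^*$ reduces appropriately — here with $X = A^*B$ and $Y = B^*A = X^*$ one has $\begin{bmatrix} 0 & X \\ X^* & 0\end{bmatrix}$ self-adjoint of norm $\|X\|$, so in fact the off-diagonal contributes $\|A^*B\|$, and the bound $2w(A^*B)$ is obtained because the splitting is done at the level of $A^*A + B^*B + (A^*B + B^*A) = \|A\pm B\|^2$ where $\|A^*B+B^*A\| = \|2\Re(A^*B)\| \le 2w(A^*B)$. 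That last inequality, $\|A^*B + B^*A\| = 2\|\Re(A^*B)\| \le 2w(A^*B)$, is the crux: it follows from $\|\Re(T)\| \le w(T)$ for any $T$. So the proof reduces to: (1) $\|A\pm B\|^2 = \|(A\pm B)^*(A\pm B)\| = \|A^*A + B^*B \pm (A^*B + B^*A)\| \le \|A^*A+B^*B\| + \|A^*B+B^*A\|$; (2) $\|A^*B + B^*A\| = 2\|\Re(A^*B)\| \le 2w(A^*B)$; combine. The main obstacle is purely bookkeeping — making sure the cross term $A^*B + B^*A$ is correctly identified as $2\Re(A^*B)$ and invoking $\|\Re(T)\|\le w(T)$ — and then the second inequality follows identically with $A,B$ replaced by $A^*,B^*$, using $\|C\| = \|C^*\|$ and $w(T) = w(T^*)$.
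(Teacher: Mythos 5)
Your final distilled argument---expand $(A\pm B)^*(A\pm B)=A^*A+B^*B\pm 2\Re(A^*B)$ via the $C^*$-identity, apply the triangle inequality, and use $\|\Re(T)\|\le w(T)$---is correct and is essentially the paper's proof, which carries out the same computation pointwise on unit vectors and bounds $\|\Re(A^*B)\|$ by $\sup_{\theta}\|\Re(e^{i\theta}A^*B)\|=w(A^*B)$. The block-matrix detours you explore and then discard are not needed, and the second inequality is obtained exactly as in the paper by replacing $A,B$ with $A^*,B^*$.
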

\begin{proof}
	Let $x\in \mathcal{H}$ and $\|x\|=1.$ Then
	\begin{eqnarray*}
		\|(A+B)x\|^2&=& \langle Ax+Bx, Ax+Bx\rangle\\
		&=& \|Ax\|^2+\|Bx\|^2+ \langle Ax, Bx\rangle + \langle Bx,Ax\rangle\\
		&=& \langle (A^*A+B^*B)x,x\rangle+ 2 \langle \Re(A^*B)x,x\rangle\\
		&=& \langle (A^*A+B^*B)x,x\rangle+ 2 |\langle \Re(A^*B)x,x\rangle|\\
		&\leq & \|A^*A+B^*B\| + 2\|\Re(A^*B)\|\\
		&\leq & \|A^*A+B^*B\| + 2\sup_{\theta \in \mathbb{R}}\|\Re(e^{i\theta }A^*B)\|\\
		&=& \|A^*A+B^*B\| + 2 w(A^*B).
	\end{eqnarray*}
	Therefore, taking the supremum over $\|x\|=1,$ we get
	$$\|A+B\|^2\leq \|A^*A+B^*B\| + 2 w(A^*B),$$
	as desired. The second inequality follows by replacing $A$ by $A^*$ and $B$ by $B^*$.
	\end{proof}

\begin{remark}\label{rem01}
(i)	Clearly, we see that 
	$$\sqrt{\| A^*A+B^*B\|+ 2w(A^*B)}\leq \sqrt{\|A\|^2+\|B\|^2+ 2\|A^*B\|}\leq \|A\|+\|B\|$$
	
	and
	$$\sqrt{\| AA^*+BB^*\|+ 2w(AB^*)}\leq \sqrt{\|A\|^2+\|B\|^2+ 2\|AB^*\|}\leq \|A\|+\|B\|.$$ 
 
(ii)	The inequalities in Theorem \ref{th4} refine the following existing inequality
	$$\|A+B\| \leq \sqrt{\|A\|^2+ \|B\|^2+\|A\|\|B\| + \min \left( w(A^*B), w(AB^*)\right)},$$
	which is recently shown in \cite{Bhunia_Rocky}.
\end{remark}

From Theorem \ref{th4} we obtain a sufficient condition for the norm equality.

\begin{cor}
	Let $A\in \mathcal{B}(\mathcal{H}).$ If $\Re(A) \Im(A) =0,$ then
	$$\|A\|= \sqrt{\frac12 \|A^*A+AA^*\|}.$$
\end{cor}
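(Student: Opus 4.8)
The plan is to split $A$ into its Cartesian decomposition and feed the two self-adjoint pieces into Theorem \ref{th4}, where the hypothesis $\Re(A)\Im(A)=0$ will be exactly what makes the numerical-radius cross term drop out. First I would set $P=\Re(A)$ and $Q=\Im(A)$, so that $P,Q$ are self-adjoint and $A=P+iQ$. A direct multiplication gives $A^*A=P^2+Q^2+i(PQ-QP)$ and $AA^*=P^2+Q^2-i(PQ-QP)$, hence $A^*A+AA^*=2(P^2+Q^2)$ and in particular $\tfrac12\|A^*A+AA^*\|=\|P^2+Q^2\|$.

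Next I would apply the first inequality of Theorem \ref{th4} to the pair of operators $P$ and $iQ$ (in the roles of ``$A$'' and ``$B$'' there). Here $(P)^*P+(iQ)^*(iQ)=P^2+Q^2$, while the cross term is $w\big((P)^*(iQ)\big)=w(i\,PQ)=|i|\,w(PQ)=0$, since $PQ=\Re(A)\Im(A)=0$ by hypothesis. Thus Theorem \ref{th4} yields
\[
\|A\|=\|P+iQ\|\le\sqrt{\|P^2+Q^2\|+2w(i\,PQ)}=\sqrt{\|P^2+Q^2\|}=\sqrt{\tfrac12\|A^*A+AA^*\|}.
\]

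For the reverse inequality only the triangle inequality for the operator norm is needed: $\|A^*A+AA^*\|\le\|A^*A\|+\|AA^*\|=2\|A\|^2$, so $\sqrt{\tfrac12\|A^*A+AA^*\|}\le\|A\|$. Combining the two displays gives the asserted equality. The argument is essentially routine, so I do not expect a genuine obstacle; the only point worth isolating is the observation that $\Re(A)\Im(A)=0$ kills the $w$-term in Theorem \ref{th4}. (Alternatively, one could bypass Theorem \ref{th4} entirely by noting that $PQ=0$ forces $QP=(PQ)^*=0$, whence $A^*A=P^2+Q^2=AA^*$, i.e.\ $A$ is normal, so $\|A\|^2=\|A^*A\|=\|P^2+Q^2\|$; I would still present the derivation via Theorem \ref{th4} to match the surrounding discussion.)
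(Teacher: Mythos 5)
Your proposal is correct and follows essentially the same route as the paper: substitute $\Re(A)$ and $i\,\Im(A)$ into Theorem \ref{th4}, use $\Re(A)\Im(A)=0$ to annihilate the numerical-radius cross term, and obtain the reverse inequality from $\frac12\|A^*A+AA^*\|\leq\|A\|^2$. Your write-up merely spells out the computation $A^*A+AA^*=2(\Re(A)^2+\Im(A)^2)$ that the paper leaves implicit.
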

\begin{proof}
	Substituting $A$ by $\Re(A)$ and $B$ by $i \Im(A)$ in Theorem \ref{th4}, we get
	$$\|A\|^2\leq \frac12 \|A^*A+AA^*\|.$$
	Also, $\frac12 \|A^*A+AA^*\|\leq \|A\|^2.$ So, $\|A\|^2= \frac12 \|A^*A+AA^*\|.$
\end{proof}

The improvements of the triangle inequality in Theorem \ref{th4} yield a necessary and sufficient condition for the parallelism of two bounded linear operators:

\begin{cor}\label{parallel}
	Let $A,B\in \mathcal{B}(\mathcal{H})$. Then
	$ A \parallel B \,\, \text{if and only if}\,\, w(A^*B)=\|A\|\|B\|.$
\end{cor}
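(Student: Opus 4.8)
The plan is to prove both implications directly from the definition of parallelism together with the two-sided estimate for $\|A\pm B\|$ in Theorem \ref{th4}. Recall $A\parallel B$ means $\|A+\lambda B\|=\|A\|+\|B\|$ for some unimodular scalar $\lambda$.

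\emph{Necessity.} Assume $A\parallel B$, so $\|A+\lambda B\|=\|A\|+\|B\|$ for some $\lambda$ with $|\lambda|=1$. Apply the first inequality of Theorem \ref{th4} with $B$ replaced by $\lambda B$:
\begin{eqnarray*}
(\|A\|+\|B\|)^2 = \|A+\lambda B\|^2 \leq \|A^*A+|\lambda|^2B^*B\| + 2w(A^*(\lambda B)) = \|A^*A+B^*B\| + 2w(A^*B),
\end{eqnarray*}
using $|\lambda|=1$ and $w(\lambda A^*B)=|\lambda|\,w(A^*B)=w(A^*B)$. On the other hand, $\|A^*A+B^*B\|\leq \|A^*A\|+\|B^*B\| = \|A\|^2+\|B\|^2$, so combining gives
\begin{eqnarray*}
\|A\|^2 + 2\|A\|\|B\| + \|B\|^2 \leq \|A\|^2+\|B\|^2 + 2w(A^*B),
\end{eqnarray*}
hence $\|A\|\|B\| \leq w(A^*B)$. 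The reverse inequality $w(A^*B)\leq \|A^*B\|\leq \|A\|\|B\|$ is automatic, so $w(A^*B)=\|A\|\|B\|$.

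\emph{Sufficiency.} Suppose $w(A^*B)=\|A\|\|B\|$. By definition of the numerical radius there is a unit vector (or a sequence of unit vectors) $x$ and a real $\theta$ with $\langle e^{i\theta}A^*Bx,x\rangle$ approaching $w(A^*B)=\|A\|\|B\|$; equivalently $\Re\langle A^*(e^{i\theta}B)x,x\rangle \to \|A\|\|B\|$. Set $\lambda=e^{i\theta}$. Then from the identity inside the proof of Theorem \ref{th4},
\begin{eqnarray*}
\|(A+\lambda B)x\|^2 = \|Ax\|^2 + \|Bx\|^2 + 2\Re\langle A^*(\lambda B)x,x\rangle.
\end{eqnarray*}
Since $\Re\langle A^*(\lambda B)x,x\rangle \leq \|Ax\|\|Bx\| \leq \|A\|\|B\|$, the hypothesis forces $\|Ax\|\|Bx\|\to\|A\|\|B\|$ and $\Re\langle A^*(\lambda B)x,x\rangle \to \|A\|\|B\|$ simultaneously, whence $\|Ax\|\to\|A\|$, $\|Bx\|\to\|B\|$, and $\|(A+\lambda B)x\|^2 \to \|A\|^2+2\|A\|\|B\|+\|B\|^2 = (\|A\|+\|B\|)^2$. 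Therefore $\|A+\lambda B\| \geq \|A\|+\|B\|$, and the reverse is the triangle inequality, so $\|A+\lambda B\|=\|A\|+\|B\|$, i.e. $A\parallel B$.

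The main obstacle is the sufficiency direction: when $\mathcal{H}$ is infinite-dimensional the supremum defining $w(A^*B)$ need not be attained, so one must argue with approximating sequences $\{x_k\}$ of unit vectors and track that the two quantities $\|Ax_k\|\|Bx_k\|$ and $\Re\langle A^*(\lambda B)x_k,x_k\rangle$ are simultaneously squeezed up to their common bound $\|A\|\|B\|$; a small case check is needed when $\|A\|=0$ or $\|B\|=0$, where both statements hold trivially. One should also note the possible dependence of $\theta$ on $k$ and pass to a subsequence so that $e^{i\theta_k}\to\lambda$ for a fixed unimodular $\lambda$, then argue $\|A+\lambda B\|=\|A\|+\|B\|$ by continuity.
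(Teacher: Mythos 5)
Your proof is correct and follows essentially the same route as the paper's own argument: necessity by applying Theorem \ref{th4} to $A$ and $\lambda B$ and squeezing against the bounds $\|A^*A+B^*B\|\le\|A\|^2+\|B\|^2$ and $w(A^*B)\le\|A\|\|B\|$, and sufficiency by taking an approximating sequence of unit vectors, passing to a convergent subsequence of the unimodular phases, and using the identity $\|(A+\lambda B)x\|^2=\|Ax\|^2+\|Bx\|^2+2\Re\langle\lambda Bx,Ax\rangle$. The points you flag as obstacles (non-attainment of the supremum, dependence of the phase on the index, the degenerate case $\|A\|\|B\|=0$) are exactly the ones the paper handles by subsequence extraction, so there is no gap.
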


We omit the proof as the result also follows from  \cite[Th. 3.3]{Zamani}.
Also since $A \parallel B$ if and only if $A^* \parallel B^*,$ using Corollary \ref{parallel} we also get the following result.
\begin{cor}\label{parallel2}
	Let $A,B\in \mathcal{B}(\mathcal{H})$. Then
	$ A \parallel B \,\, \text{if and only if}\,\, w(AB^*)=\|A\|\|B\|.$
\end{cor}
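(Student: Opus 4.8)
The final statement to prove is Corollary \ref{parallel2}: for $A,B\in \mathcal{B}(\mathcal{H})$, $A\parallel B$ if and only if $w(AB^*)=\|A\|\|B\|$. The plan is to derive this directly from Corollary \ref{parallel} together with the elementary observation that the parallelism relation is preserved under taking adjoints.

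First I would record the fact that $A\parallel B$ if and only if $A^*\parallel B^*$: by definition $A\parallel B$ means there is a scalar $\lambda$ with $|\lambda|=1$ such that $\|A+\lambda B\|=\|A\|+\|B\|$; applying the isometric property of the adjoint, $\|A+\lambda B\|=\|(A+\lambda B)^*\|=\|A^*+\bar\lambda B^*\|$, while $\|A\|=\|A^*\|$ and $\|B\|=\|B^*\|$, and $|\bar\lambda|=1$, so the same equality witnesses $A^*\parallel B^*$; the converse is identical (or follows since $(A^*)^*=A$). Next I would apply Corollary \ref{parallel} with $A,B$ replaced by $A^*,B^*$: this gives $A^*\parallel B^*$ if and only if $w((A^*)^*B^*)=\|A^*\|\|B^*\|$, i.e.\ $w(AB^*)=\|A\|\|B\|$. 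Chaining the two equivalences yields $A\parallel B \iff A^*\parallel B^* \iff w(AB^*)=\|A\|\|B\|$, which is the claim.

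Since Corollary \ref{parallel} is already available from the excerpt and the adjoint-invariance of parallelism is a one-line computation, there is essentially no obstacle here; the only point requiring a moment's care is tracking the complex conjugate on the unimodular scalar $\lambda$ when passing to adjoints, but $|\lambda|=|\bar\lambda|$ makes this harmless. The proof is therefore just the two short steps above, and I would present it in two or three sentences exactly as the paper's surrounding style (``We omit the proof'' / short deduction) suggests.
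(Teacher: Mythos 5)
Your proposal is correct and matches the paper's own argument exactly: the paper likewise deduces Corollary \ref{parallel2} from Corollary \ref{parallel} via the observation that $A\parallel B$ if and only if $A^*\parallel B^*$. Your write-up merely makes explicit the one-line adjoint computation $\|A+\lambda B\|=\|A^*+\bar{\lambda}B^*\|$ that the paper leaves to the reader.
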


Again, using Theorem \ref{th4}, we deduce the following necessary conditions for the parallelism of two bounded linear operators.

\begin{cor}\label{parallel3}
	Let $A,B\in \mathcal{B}(\mathcal{H})$. If
	$A \parallel B$, then
	  
	$$ \|A^*A+B^*B\|= \|AA^*+BB^*\|= \|A\|^2+ \|B\|^2.$$

\end{cor}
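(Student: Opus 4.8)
The plan is to read off the identities directly from the triangle-inequality refinements of Theorem \ref{th4}, feeding in the parallelism characterizations of Corollaries \ref{parallel} and \ref{parallel2}. So suppose $A \parallel B$; by definition there is a unimodular scalar $\lambda$ with $\|A + \lambda B\| = \|A\| + \|B\|$.

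First I would apply the first inequality of Theorem \ref{th4}, with the $+$ sign, to the pair $A$ and $\lambda B$. Since $(\lambda B)^*(\lambda B) = |\lambda|^2 B^*B = B^*B$ and $w(A^*(\lambda B)) = w(\lambda A^*B) = w(A^*B)$, this gives $(\|A\| + \|B\|)^2 = \|A + \lambda B\|^2 \leq \|A^*A + B^*B\| + 2 w(A^*B)$. By Corollary \ref{parallel}, $A \parallel B$ forces $w(A^*B) = \|A\|\|B\|$, so the right-hand side is $\|A^*A + B^*B\| + 2\|A\|\|B\|$; subtracting $2\|A\|\|B\|$ yields $\|A\|^2 + \|B\|^2 \leq \|A^*A + B^*B\|$. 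The reverse inequality is immediate from subadditivity of the norm together with $\|A^*A\| = \|A\|^2$ and $\|B^*B\| = \|B\|^2$, so $\|A^*A + B^*B\| = \|A\|^2 + \|B\|^2$. Running the same argument with the second inequality of Theorem \ref{th4} and Corollary \ref{parallel2} (which gives $w(AB^*) = \|A\|\|B\|$) produces $\|AA^* + BB^*\| = \|A\|^2 + \|B\|^2$, and combining the two identities finishes the proof.

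There is no genuine obstacle here: the argument is a two-line deduction from results already in hand. The only point that deserves a moment's attention is the bookkeeping of the unimodular factor $\lambda$ — one should record that replacing $B$ by $\lambda B$ leaves $B^*B$, $BB^*$, $\|B\|$, and $w(A^*B)$ (respectively $w(AB^*)$) unchanged, so that Theorem \ref{th4} may be invoked for the rescaled pair without loss of generality.
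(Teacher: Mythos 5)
Your proof is correct and takes essentially the same route as the paper: both feed the pair $A,\lambda B$ into Theorem \ref{th4} and compare the resulting bound with $(\|A\|+\|B\|)^2$. The only cosmetic differences are that the paper extracts $w(A^*B)=\|A\|\|B\|$ from the forced equality in its chain $\|A\|+\|B\|\leq\sqrt{\|A^*A+B^*B\|+2w(A^*B)}\leq\cdots\leq\|A\|+\|B\|$ rather than quoting Corollary \ref{parallel}, and it obtains the second identity by passing to $A^*\parallel B^*$ instead of invoking the second inequality of Theorem \ref{th4} with Corollary \ref{parallel2}.
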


\begin{proof}
	Let $ A \parallel B.$ Then there exists a scalar $\lambda$, $|\lambda|=1$ such that $\|A+\lambda B\|=\|A\|+\|B\|.$ Now 
	replacing $B$ by $\lambda B$ in Theorem \ref{th4}, we get 
	\begin{eqnarray*}
		\|A\|+\|B\|= \|A+\lambda B\| &\leq & \sqrt{\| A^*A+B^*B\|+ 2w(A^*B)}\\
		&\leq & \sqrt{\|A\|^2+\|B\|^2+ 2\|A^*B\|} \\
		&\leq& \sqrt{\|A\|^2+\|B\|^2+ 2\|A\| \|B\|}\\
		&=& \|A\|+\|B\|.
	\end{eqnarray*}
	This implies that $ \|A^*A+B^*B\|=  \|A\|^2+ \|B\|^2.$ Again, since $ A \parallel B$ if anly if  $ A^* \parallel B^*,$ we get $\|AA^*+BB^*\|= \|A\|^2+ \|B\|^2.$

\end{proof}

However, the converse part is not true, in general. For example, let $A=(a_{ij})$ be an $n\times n$ matrix,  where $a_{ij}=\delta_{j,i+1}$  and let $B=A^*.$ Then $\|A^*A+B^*B\|= \|AA^*+BB^*\|= \|A\|^2+ \|B\|^2=2$, but 
$$\max_{|\lambda|=1}\|A+\lambda B\|= \max_{|\lambda|=1}\|A+\lambda A^*\|=2w(A)=2 \cos\left(\frac{\pi}{n+1}\right)< 2= \|A\|+\|B\|.$$


\section{A bound for sum of the zeros of a polynomial}\label{sec6}

\noindent
In this section, as an application of the Schatten $p$-norm inequalities, we give an upper bound for the sum of the absolute values of the zeros of a complex polynomial
  $$ p(z) = z^n + a_nz^{n-1} + \ldots + a_2z + a_1 $$ of degree $n\geq 2 $ with $a_1 \neq 0$.                                     Various bounds for the zeros of $p(z)$ have been studied by many mathematicians over the years by using  the numerical radius inequalities to the Frobenius companion matrix associated with the polynomial $p(z)$, see \cite{Book2022}. Recall that the Frobenius companion matrix of the polynomial $p(z)$ is  $$C(p)=\begin{pmatrix}
-a_n & -a_{n-1} & .... & -a_2 & -a_1\\
1 & 0 & ...& 0 & 0\\
0 & 1 & ... & 0 & 0\\
\vdots & \vdots & \ddots & \vdots & \vdots\\
0 & 0 & .... & 1 & 0
\end{pmatrix}.$$
The characteristic polynomial of $ C(p)$ is the polynomial $p(z)$ and so the zeros of $p(z)$ are exactly the eigenvalues of $C(p)$, see \cite[p. 316]{a14}. Now we are in a position to prove: 

\begin{theorem}\label{poly}
    If $\lambda_1,\lambda_2,\ldots, \lambda_n$ are the zeros of  $p(z),$ then
    \begin{eqnarray*}
      |\lambda_1|+|\lambda_2|+\ldots+|\lambda_n| &\leq& \sqrt{n\left(n-1 + \sum_{i=1}^n|a_i|^2 \right)}.
    \end{eqnarray*}
\end{theorem}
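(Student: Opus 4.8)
The plan is to realize the zeros of $p(z)$ as the eigenvalues of the Frobenius companion matrix $C(p)\in\mathcal{M}_n(\mathbb{C})$, and then to apply the Schatten-norm/eigenvalue inequality of Corollary \ref{cor99990} with $p=1$. Recall that the characteristic polynomial of $C(p)$ is $p(z)$, so $\lambda_1,\dots,\lambda_n$ is exactly the list of eigenvalues of $C(p)$ (with multiplicity). Since $a_1\neq 0$ we have $\det C(p)=\pm a_1\neq 0$, hence $C(p)$ is invertible: all $\lambda_i$ are non-zero and $\mathrm{rank}\,C(p)=n$. Thus $\sum_{i=1}^n|\lambda_i|$ coincides with $\sum_n|\lambda_n(C(p))|$, the sum over the non-zero eigenvalues appearing in Corollary \ref{cor99990}.

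First I would invoke Corollary \ref{cor99990} (equivalently, combine Weyl's inequality \eqref{improve020} for $p=1$ with the reverse bound \eqref{ppppp} applied to $A=C(p)$) to obtain
\begin{eqnarray*}
\sum_{i=1}^n|\lambda_i| \;=\; \sum_n|\lambda_n(C(p))| \;\leq\; \left(\mathrm{rank}\,C(p)\right)^{1/2}\,\|C(p)\|_{2} \;=\; \sqrt{n}\,\,\|C(p)\|_{2},
\end{eqnarray*}
where $\|\cdot\|_2$ is the Hilbert--Schmidt norm. Next I would compute $\|C(p)\|_2$ directly from the entries of $C(p)$: the first row contributes $\sum_{i=1}^n|a_i|^2$ and the $n-1$ ones on the subdiagonal contribute $n-1$, so that
\begin{eqnarray*}
\|C(p)\|_{2}^{2} \;=\; \mathrm{trace}\big(C(p)^*C(p)\big) \;=\; (n-1)+\sum_{i=1}^n|a_i|^2 .
\end{eqnarray*}
Substituting this into the previous display yields $\sum_{i=1}^n|\lambda_i|\leq \sqrt{n\big(n-1+\sum_{i=1}^n|a_i|^2\big)}$, which is the claimed bound.

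I do not anticipate a genuine obstacle here: the argument is a two-line application of Corollary \ref{cor99990} together with an elementary Hilbert--Schmidt norm computation. The only points requiring a little care are the bookkeeping that ensures the sum over \emph{non-zero} eigenvalues in Corollary \ref{cor99990} is the full sum $\sum_{i=1}^n|\lambda_i|$ (guaranteed by $a_1\neq 0$, so no eigenvalue vanishes and $\mathrm{rank}\,C(p)=n$), and correctly accounting for all $n^2$ entries of $C(p)$ when evaluating $\|C(p)\|_2^2$. If one did not wish to use that $\mathrm{rank}\,C(p)=n$ exactly, the weaker bound $\mathrm{rank}\,C(p)\le n$ suffices equally well, since it still gives the factor $\sqrt{n}$.
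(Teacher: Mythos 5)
Your proposal is correct and follows essentially the same route as the paper: apply Corollary \ref{cor99990} (inequality \eqref{p-norm01000}) with $p=1$ to the Frobenius companion matrix $C(p)$, use $a_1\neq 0$ to get $\mathrm{rank}\,C(p)=n$, and compute $\|C(p)\|_2^2=n-1+\sum_{i=1}^n|a_i|^2$ from the entries. Your extra remarks on the non-vanishing of the eigenvalues and the sufficiency of $\mathrm{rank}\,C(p)\le n$ are careful bookkeeping that the paper leaves implicit.
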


\begin{proof}
   Following the Schatten $p$-norm inequality \eqref{p-norm01000} for $p=1$, we get
$$ \sum_{j=1}^n|\lambda_j| \leq  \left(\textit{rank} \,\, C(p) \right)^{1/2}\|C(p)\|_{2}^{}
        =   \sqrt{n\left(\textit{trace} \,\, |C(p)|^2\right)}
        = \sqrt{n\left(n-1 + \sum_{i=1}^n|a_i|^2 \right)},$$
as desired.
\end{proof}

As a consequence of Theorem \ref{poly}, we get an upper bound for the smallest absolute value of the zeros of the polynomial $p(z).$

\begin{cor}
 If $\lambda_1,\lambda_2,\ldots, \lambda_n$ are the zeros of  $p(z)$ with $|\lambda_1|\leq |\lambda_2| \leq \ldots\leq | \lambda_n|$, then
    \begin{eqnarray*}
      |\lambda_1| &\leq& \frac1n \sqrt{n\left(n-1 + \sum_{i=1}^n|a_i|^2 \right)}.
    \end{eqnarray*}   
\end{cor}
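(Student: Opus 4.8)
The plan is to deduce this directly from Theorem \ref{poly}, using only the ordering hypothesis $|\lambda_1|\le|\lambda_2|\le\cdots\le|\lambda_n|$. The single observation needed is that the smallest term in a sum of $n$ nonnegative reals is at most the average of those terms.

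Concretely, first I would note that since $|\lambda_1|\le |\lambda_j|$ for every $j=1,\dots,n$, summing these $n$ inequalities gives
\begin{eqnarray*}
 n\,|\lambda_1| &\le& |\lambda_1|+|\lambda_2|+\cdots+|\lambda_n|.
\end{eqnarray*}
Next I would invoke Theorem \ref{poly}, which bounds the right-hand side:
\begin{eqnarray*}
 |\lambda_1|+|\lambda_2|+\cdots+|\lambda_n| &\le& \sqrt{n\left(n-1+\sum_{i=1}^n|a_i|^2\right)}.
\end{eqnarray*}
Combining the two displays and dividing through by $n$ (which is positive) yields exactly
\begin{eqnarray*}
 |\lambda_1| &\le& \frac1n\sqrt{n\left(n-1+\sum_{i=1}^n|a_i|^2\right)},
\end{eqnarray*}
as claimed.

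There is no real obstacle here; the content is entirely carried by Theorem \ref{poly}, and the corollary is the routine ``smallest $\le$ average'' reduction. If one wanted a slightly sharper statement one could instead write $|\lambda_1|\le \tfrac1n\sqrt{n(n-1+\sum|a_i|^2)}=\sqrt{\tfrac{1}{n}(n-1+\sum|a_i|^2)}$, but the form above matches the statement and needs no further argument.
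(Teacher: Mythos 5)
Your proof is correct and is exactly the argument the paper intends (the paper states the corollary as an immediate consequence of Theorem \ref{poly} without writing out the details): the smallest of the $n$ nonnegative numbers $|\lambda_j|$ is at most their average, and the sum is bounded by Theorem \ref{poly}. Nothing further is needed.
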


\section{A bound for the energy of a graph}\label{sec7}

\noindent 
In the final section, as an application of the Schatten $p$-norm inequalities, we obtain a refined upper bound for the energy of a simple graph.  Let $G$ be a simple undirected graph with vertex set $V(G)=\{v_1,v_2,\ldots,v_n\}$ and edge set $E(G)=\{e_1,e_2,\ldots, e_m\}.$ Let $d_i$ be the degree of the vertex $v_i,$ for $i=1,2,\ldots,n.$
The adjacency matrix associated with the graph $G$, denoted as $\textit{Adj}(G)$, is defined as $\textit{Adj}(G)=(a_{ij})_{n\times n}$, where $a_{ij} = 1,$ if $ v_i \sim v_j$ (i.e., $v_i$ is adjacent to $v_j$)  and $a_{ij} = 0$ otherwise. Clearly, $\textit{Adj}(G)$ is a symmetric  matrix with entries $0,1$ and the main diagonal entries are zero. Let $\lambda_1, \lambda_2,\ldots, \lambda_n$ be the eigenvalues of $\textit{Adj}(G).$
The energy of the graph $G$, denoted as $\mathcal{E}(G)$, is defined as $$\mathcal{E}(G)=\sum_{i=1}^n|\lambda_i|.$$
This concept was introduced by Gutman \cite{Gutman}, in connection to the
total $\pi$-electron energy. For details on the general theory of the total $\pi$-electron energy, as well as its chemical applications, see \cite{Appl1, Appl2}. The search of upper and lower bounds for $\mathcal{E}(G)$ is a wide subfield of spectral graph theory.
In \cite{Energy},  McClelland showed that 
\begin{eqnarray}\label{eng}
    \mathcal{E}(G) &\leq& \sqrt{2mn}.
\end{eqnarray}
After that, various bounds have been studied in the literature, we refer to see \cite{Jahan, Rada1} and the references therein.
Here, we provide a new upper bound of $\mathcal{E}(G)$ by using  the rank of the matrix $\textit{Adj}(G)$), which is strictly stronger than the bound \eqref{eng} when the graph is singular.

\begin{theorem}\label{graph1}
   Let $G$ be a simple graph. Then 
\begin{eqnarray*}
     \mathcal{E}(G) &\leq&  \sqrt{2m \left(\textit{rank Adj}(G) \right)}.
\end{eqnarray*}

\end{theorem}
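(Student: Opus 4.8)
The plan is to apply Corollary~\ref{cor99990} to the operator $A = \textit{Adj}(G)$, which is a finite rank (indeed, $n \times n$) matrix, with the exponent $p = 1$. Since the energy $\mathcal{E}(G) = \sum_{i=1}^n |\lambda_i|$ only counts eigenvalues, and the zero eigenvalues contribute nothing, we may identify $\mathcal{E}(G)$ with $\sum_n |\lambda_n(A)|$, the sum over the \emph{non-zero} eigenvalues with multiplicity. Corollary~\ref{cor99990} with $p=1$ then yields
\begin{eqnarray*}
\mathcal{E}(G) \;=\; \sum_n |\lambda_n(A)| \;\leq\; \left(\textit{rank}\, A\right)^{1/2} \|A\|_2 .
\end{eqnarray*}

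The second step is to compute $\|A\|_2$. By definition $\|A\|_2^2 = \textit{trace}\,|A|^2 = \textit{trace}\,(A^*A) = \sum_{i,j} |a_{ij}|^2$. Because $\textit{Adj}(G)$ has entries in $\{0,1\}$ and is symmetric with zero diagonal, the number of nonzero entries equals twice the number of edges, i.e.\ $\sum_{i,j} |a_{ij}|^2 = 2m$. Hence $\|A\|_2 = \sqrt{2m}$, and substituting back gives
\begin{eqnarray*}
\mathcal{E}(G) \;\leq\; \left(\textit{rank}\, A\right)^{1/2} \sqrt{2m} \;=\; \sqrt{2m\left(\textit{rank}\, \textit{Adj}(G)\right)},
\end{eqnarray*}
which is the desired inequality.

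There is no real obstacle here: the content is entirely carried by Corollary~\ref{cor99990} (itself a consequence of Theorem~\ref{thhhhhp} via the Schur ``basis''), and the only thing to verify carefully is the elementary identity $\textit{trace}\,(\textit{Adj}(G))^2 = 2m$. To close, I would remark that since $\textit{rank}\,\textit{Adj}(G) \leq n$ always, this bound is at least as strong as McClelland's bound $\mathcal{E}(G) \leq \sqrt{2mn}$, and is strictly sharper precisely when $\textit{Adj}(G)$ is singular, i.e.\ when $0$ is an eigenvalue of $G$.
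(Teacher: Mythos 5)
Your proof is correct and follows essentially the same route as the paper: apply Corollary~\ref{cor99990} with $p=1$ to $\textit{Adj}(G)$ and compute $\|\textit{Adj}(G)\|_2^2 = 2m$ (the paper does this via $\sum_i d_i = 2m$ rather than by counting nonzero entries, which is the same thing). Your explicit remark that the zero eigenvalues contribute nothing to $\mathcal{E}(G)$ is a worthwhile point of care that the paper leaves implicit.
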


\begin{proof}
    From the Schatten $p$-norm inequality \eqref{p-norm01000} (for the case $p=1$), we get
    \begin{eqnarray*}
        \sum_{i=1}^n|\lambda_i| &\leq& \left(\textit{rank}\, \textit{Adj}(G)\right)^{1/2}  \|A(G)\|_2\\
       & =& \left(\textit{rank}\, \textit{Adj}(G)\right)^{1/2} \,\, \left(\textit{trace}\, |\textit{Adj}(G)|^2 \right)^{1/2}\\
        &=& \sqrt{(\textit{rank Adj}(G)) \sum_{i=1}^n d_i} \quad \left( \textit{since} \,\, \textit{trace}\, |\textit{Adj}(G)|^2= \sum_{i=1}^n d_i\right)\\
        &=& \sqrt{2m (\textit{rank Adj}(G))} \quad \left( \textit{since} \,\, \sum_{i=1}^n d_i= 2m \right),
        \end{eqnarray*}
as desired.
\end{proof}

\vspace{1cm}



\noindent \textbf{Declaration of competing interest.}
The author declares that there is no competing interest.



\bibliographystyle{amsplain}

\end{document}